\documentclass[a4paper,reqno,10pt]{amsart}

%\usepackage{pb-diagram}
%\tolerance=1000
%\hbadness=10000
\raggedbottom
\hfuzz3pt
\usepackage{epsf,graphicx,epsfig}
\usepackage{amscd}
\usepackage{bbm}
\usepackage{amsmath,latexsym,amssymb,amsthm}
\usepackage[nospace,noadjust]{cite}
\usepackage{textcomp}
\usepackage{dsfont}
\usepackage{fdsymbol}
\usepackage{setspace,cite}
\usepackage{lscape,fancyhdr,fancybox}
\usepackage{stmaryrd}
\usepackage[all,cmtip]{xy}
\usepackage{tikz}
\usepackage{cancel}
\usetikzlibrary{shapes,arrows,decorations.markings}
%\usepackage[hmarginratio=1:1, vmarginratio =5:6,
%textheight=22cm,bindingoffset=1.6cm, textwidth=14.6cm]{geometry}
\setlength{\unitlength}{0.4in}

\usepackage{graphicx}

\usepackage{color}
\usepackage{url}
\usepackage{enumerate}
\usepackage[mathscr]{euscript}
%\usepackage{showkeys}
%\input xy
%\xyoption{all}

\setlength{\topmargin}{-0.5in}
\setlength{\textheight}{9.8in}
\setlength{\oddsidemargin}{-0.1in}
\setlength{\evensidemargin}{-.1in}
\setlength{\textwidth}{6.4in}

  \theoremstyle{plain}

\swapnumbers
    \newtheorem{thm}{Theorem}[section]
    \newtheorem{prop}[thm]{Proposition}
     
   \newtheorem{lemma}[thm]{Lemma}

    \newtheorem{subsec}[thm]{}
\theoremstyle{definition}
    \newtheorem{defn}[thm]{Definition}
        \newtheorem{remark}[thm]{Remark}
    \newtheorem{exam}[thm]{Example}

\theoremstyle{remark}

\setcounter{tocdepth}{1}

\title{}
\author{}
\date{}
\usepackage{amssymb}

\usepackage{hyperref}
\hypersetup{
	colorlinks,
	citecolor=blue,
	filecolor=black,
	linkcolor=blue,
	urlcolor=black
}

\begin{document}

\title[Averaging operators on groups, racks and Leibniz algebras]{Averaging operators on groups, racks and Leibniz algebras}

\author{Apurba Das}
\address{Department of Mathematics,
Indian Institute of Technology, Kharagpur 721302, West Bengal, India.}
\email{apurbadas348@gmail.com, apurbadas348@maths.iitkgp.ac.in}

%\author{Ramkrishna Mandal}
%\address{Department of Mathematics, Indian Institute of Technology, Kharagpur 721302, West Bengal, India.}
%\email{ramkrishnamandal430@gmail.com}

\begin{abstract}
This paper considers averaging operators on various algebraic structures and studies the induced structures.
We first introduce the notion of an averaging operator on a group $G$ and show that it induces a rack structure. Moreover, the given group structure and the induced rack structure form a group-rack. We observe that any pointed group-rack can be embedded into an averaging group. We show that the differentiation of a smooth pointed averaging operator on a Lie group gives rise to an averaging operator on the corresponding Lie algebra. Next, we consider averaging operators on a rack that induces a hierarchy of new rack structures. Moreover, any two racks with increasing hierarchy levels form a rack-pairing, a structure that is related to two-sided skew braces by conjugation. We also consider averaging operators on cocommutative Hopf algebras and braided vector spaces in relations to averaging operators on groups, Lie algebras and racks. In the end, we define averaging operators on a Leibniz algebra, find the induced structure and show that the differentiation of a smooth pointed averaging operator on a pointed Lie rack yields an averaging operator on the corresponding Leibniz algebra.
\end{abstract}

\maketitle

%\curraddr{}
%\email{}

%\subjclass[2010]{}
%\keywords{}

\medskip

\medskip

\medskip

{\em Mathematics Subject Classification (2020).} 17B40, 22E60, 16T05, 17A32.

{\em Keywords.} Averaging operators, Lie algebras, Lie groups, Racks,  Leibniz algebras.

%Averaging algebras of nonzero weight, Triassociative algebras, $L_\infty$-algebras, Cohomology, Homotopy algebras.

%\medskip

%\noindent {\sf Date of resubmission:} July 26, 2021.

\thispagestyle{empty}

\tableofcontents

%\vspace{0.2cm}

\medskip

\section{Introduction}

\subsection{Averaging algebras} The concept of averaging operators has a long history as it appears in many branches of mathematics and fluid dynamics. Averaging operators first implicitly appeared in the study of turbulence theory in a paper by Reynolds that was published in 1895 \cite{rey}. In the mathematical study of turbulence theory, such an operator appears as the time average operator of real-valued functions defined in a time-space. Subsequently, averaging operators were further investigated in functional analysis and probability theory \cite{kampe,kelley,miller,moy}. On the algebraic side, Brainerd \cite{brain} considered the conditions under which an averaging operator can be realized as a generalization of the integral operator on the ring of real-valued measurable functions. However, averaging operators became popular among algebraists when Cao \cite{cao} wrote his Ph.D. thesis in 2000. Recall that an averaging operator on an associative algebra $A$ is a linear map $P: A \rightarrow A$ that satisfies
\begin{align*}
    P(a) P(b) = P (P(a) b) = P (a P(b)), \text{ for } a, b \in A.
\end{align*}
An algebra $A$ equipped with a distinguished averaging operator is called an averaging algebra. Cao mainly constructed the free commutative averaging algebra and described the induced
Lie and Leibniz algebras. His study was generalized by Pei and Guo \cite{pei-guo} who obtained free associative averaging algebras using a class of bracketed words, called averaging words, and discovered their relations with Schr\"{o}der numbers.
Averaging operators were also defined on Lie algebras \cite{aguiar}. They are called embedding tensors in the physics literature and related to Leibniz algebras, tensor hierarchies and higher gauge theories \cite{kotov-strobl,bon}. See also \cite{sheng-tang-zhu,das,das-sen} for some cohomological study of averaging algebras. The general study of averaging operators on any binary quadratic operads and their relations with bisuccessors, duplicators and Rota-Baxter operators are systematically studied in \cite{bai-b,pei-b}

\subsection{Rota-Baxter operators and averaging operators on groups} While Rota-Baxter operators and averaging operators are well understood on associative, Lie and other algebras, can one define these operators on a Lie group so that their differentiation yields same type of operators on the corresponding Lie algebra? In the seminal work \cite{guo-adv}, Guo, Lang and Sheng introduced and studied Rota-Baxter operators of weight $1$ on groups. Among others, they showed that the differentiation of a smooth Rota-Baxter operator of weight $1$ on a Lie group yields a Rota-Baxter operator of weight $1$ on the corresponding Lie algebra. Their study got much attention when Bardakov and Gubarev \cite{bardakov} showed that a Rota-Baxter group gives rise to a skew brace, a structure introduced by Guarnieri and Vendramin \cite{guar} in connections with the set-theoretical solutions of the Yang-Baxter equation. Subsequently, Goncharov \cite{gon} defined Rota-Baxter operators on a cocommutative Hopf algebra $H$ and showed that there is a one-to-one correspondence between Rota-Baxter operators of weight $1$ on a group $G$ and Rota-Baxter operators on $H= {\bf k}[G]$. Recently, Rota-Baxter operators of weight $0$ on a group $G$ with respect to a linear representation were considered and their relations with braces, Yang-Baxter equation were found in \cite{li-sheng-tang}.

Our primary aim in this paper is to introduce and study averaging operators on (both abstract and Lie) groups. Given a group $G$, a set-map $\mathcal{A} : G \rightarrow G$ is said to be an {averaging operator} on $G$ if it satisfies
\begin{align}\label{avg-ident}
    \mathcal{A} (g) \mathcal{A} (h) \mathcal{A}(g)^{-1} = \mathcal{A} ( \mathcal{A} (g) h \mathcal{A}(g)^{-1}  ), \text{ for all } g, h \in G.
\end{align}
A group $G$ equipped with an averaging operator is called an averaging group. We give some characterizations of an averaging operator. We show that an averaging operator $\mathcal{A} : G \rightarrow G$ on a group $G$ induces a rack structure (called the descendent rack) $g \diamond_\mathcal{A} g :=  \mathcal{A} (g) h \mathcal{A}(g)^{-1}$ on the underlying set $G$. When $\mathcal{A}$ is the identity map on $G$, the induced rack is simply the conjugation rack structure. Given an averaging operator, the given group structure on $G$ and the induced rack structure form a new structure which we call a group-rack. We show that any pointed group-rack (a group-rack whose underlying rack is pointed with distinguished element $e$, the identity element of the group) can be embedded into an averaging group. Next, we consider a smooth pointed (i.e. $\mathcal{A}(e) = e$) averaging operator on a Lie group $G$ and show that its differentiation $\mathcal{A}_{*e}: \mathfrak{g} \rightarrow \mathfrak{g}$ gives rise to an averaging operator on the corresponding Lie algebra $\mathfrak{g}$. Finally, we also define relative averaging operators on a group $G$ (with respect to both an action and a linear representation). Finally, when we differentiate smooth relative averaging operators on a Lie group with respect to the adjoint representation $\mathfrak{g}$, again we obtain averaging operators on the Lie algebra $\mathfrak{g}$.

\subsection{Averaging operators on racks and rack-pairings} An averaging operator on a group is described by the identity (\ref{avg-ident}) which can be understood as $\mathcal{A} (g) \diamond \mathcal{A} (h) = \mathcal{A} (\mathcal{A} (g) \diamond h)$ in terms of the conjugation rack operation on $G$. This guided us to consider averaging operators on an arbitrary rack $(Q, \diamond)$. We observe that an averaging operator on a rack $(Q, \diamond)$ induces a new rack structure $x \diamond_\mathcal{A} y := \mathcal{A} (x) \diamond y$, for $x, y \in Q$, on the underlying set $Q$. Moreover, the induced operation $\diamond_\mathcal{A}$ is left distributive over $\diamond$. Hence $(Q, \diamond, \diamond_\mathcal{A})$ forms a new structure which we call a rack-pairing. A rack-pairing is weaker than the earlier notion of a multi-rack labelled by a set of two elements. We show that an averaging operator $\mathcal{A}$ on a rack $(Q, \diamond)$ yields a hierarchy of new rack structures $\diamond_{\mathcal{A}^k}$, for any $k \geq 0$. Further, $(Q, \diamond_{\mathcal{A}^k}, \diamond_{\mathcal{A}^{k+l}})$ is a rack pairing, for any $k, l \geq 0$. Like racks can be obtained from a group by conjugation, we observe that a rack-pairing is obtained from a two-sided skew brace by conjugation. This says that rack-pairing is the analogue of skew braces in the context of rack theory. By keeping this in mind, we generalize some characterizations of skew braces in our context. Among others, we show that a rack-pairing structure on a given rack is described by the regular subrack of the holomorph rack. In addition, we also consider  $\lambda$-homomorphic and symmetric rack-pairings.

In \cite{bai} the authors have considered the notion of a post-group in connection with Rota-Baxter groups and skew braces. They showed that the category of skew braces and the category of post-groups are isomorphic. Moreover, post-Lie groups are the global objects corresponding to post-Lie algebras. Motivated by their study, we consider the notion of a di-rack. We observe that a rack-pairing gives rise to a di-rack and vice-versa. Moreover, the correspondences are invertible and functorial which shows that the category of rack-pairings and the category of di-racks are isomorphic.

While finishing our paper, we came to know that Bardakov and Bovdi have considered averaging operators on racks in their recent preprint \cite{bardakov-bovdi}. They mainly focused on a particular result which states that an averaging operator on a rack induces an averaging operator on the corresponding rack algebra. However, our theme is broad and the whole study is completely different from theirs.

\subsection{Averaging operators on cocommutative Hopf algebras and braided vector spaces} Any group $G$ gives rise to the group algebra ${\bf k}[G]$ which is infact a cocommutative Hopf algebra. Motivated by the work of Goncharov \cite{gon}, we introduce averaging operators on a cocommutative Hopf algebra $H$ and show that there is a one-to-one correspondence between averaging operators on a group $G$ and averaging operators on the cocommutative Hopf algebra $H= {\bf k}[G]$.

On the other hand, it is well-known that Lie algebras, groups and racks are intimately related to braided vector spaces and the set-theoretical solutions of the Yang-Baxter equation \cite{baez-crans,eiser}. By keeping this in mind, we set the definition of an averaging operator on braided vector spaces and braided sets. We show that an averaging operator on a Lie algebra yields an averaging operator on the corresponding braided vector space. Similarly, an averaging operator on a rack can be realized as an averaging operator on the corresponding braided set.

\subsection{Averaging operators on Leibniz algebras} Leibniz algebras are noncommutative analogues of Lie algebras and can be obtained from pointed Lie racks by differentiation \cite{kinyon}. We introduce averaging operators on a Leibniz algebra and show that an averaging operator induces a new structure which we call a di-Leibniz algebra. We provide some examples of di-Leibniz algebras and show that any di-Leibniz algebra can be embedded into an averaging Leibniz algebra.

\medskip

Next, we focus on some Lie-theoretic results regarding averaging operators and induced structures. Let $G$ be a Lie group and $\mathcal{A}: G \rightarrow G$ be a smooth pointed averaging operator on $G$. Then the descendent rack is a pointed Lie rack and hence gives a Leibniz algebra structure on $\mathfrak{g} = T_e G$ by differentiation. On the other hand, the averaging operator $\mathcal{A} : G \rightarrow G$ induces an averaging operator $\mathcal{A}_{* e} : \mathfrak{g} \rightarrow \mathfrak{g}$ on the corresponding Lie algebra. Hence it yields a Leibniz algebra structure on $\mathfrak{g}$. Then the above two Leibniz algebras coincide. We also introduce the notion of a Lie-Leibniz algebra and show that a pointed Lie group-rack gives rise to a Lie-Leibniz algebra by differentiation. Finally, we consider smooth pointed averaging operators on a pointed Lie rack and show that their differentiation yields left averaging operators on the corresponding Leibniz algebra.

\subsection{Organization of the paper} The paper is organized as follows. In Section \ref{sec2}, we introduce and study averaging operators on a group. In particular, we show that the differentiation of a smooth pointed averaging operator on a Lie group gives an averaging operator on the corresponding Lie algebra. In Sections \ref{sec3} and \ref{sec4}, we study averaging operators on racks and induced rack-pairings. We consider averaging operators on braided vector spaces and braided sets in Section \ref{sec5}. The notion of averaging operators on Leibniz algebras and the concept of di-Leibniz algebras are introduced in Section \ref{sec6}. Various Lie-theoretic results regarding averaging operators are considered in \ref{sec7}. Finally, we discuss some generalizations and future works in Section \ref{sec8}.

\section{Averaging operators on groups}\label{sec2}

In this section, we introduce averaging operators on a group. We show that an averaging operator on a group $G$ induces a rack. Moreover, each left translation of the rack operation is a group automorphism. This compatibility yields the structure of a group-rack. We show that any pointed group-rack can be embedded into an averaging group. Finally, we show that the differentiation of a smooth pointed averaging operator on a Lie group gives rise to an averaging operator (embedding tensor) on the corresponding Lie algebra.

\medskip

Let $G$ be a group with the identity element $e$. The adjoint action of $G$ onto itself is the group homomorphism $\mathrm{Ad} : G \rightarrow \mathrm{Aut}(G), g \mapsto \mathrm{Ad}_g$ defined by
\begin{align*}
    \mathrm{Ad}_g (h) = g h g^{-1}, \text{ for } h \in G.
\end{align*}

\begin{defn}
(i) Let $G$ be a group. An {\bf averaging operator} on $G$ is a map $\mathcal{A} : G \rightarrow G$ that satisfies
\begin{align}
    \mathcal{A} (g) \mathcal{A}(h) \mathcal{A}(g)^{-1} = \mathcal{A} ( \mathcal{A} (g) h \mathcal{A}(g)^{-1}), \text{ for all } g, h \in G.
\end{align}
An averaging operator $\mathcal{A}: G \rightarrow G$ is said to be {\bf pointed} if $\mathcal{A} (e) = e$.

(ii) An {\bf averaging group} is a group $G$ equipped with an averaging operator $\mathcal{A} : G \rightarrow G$. We often denote an averaging group as above by the pair $(G, \mathcal{A})$.
\end{defn}

%\begin{exam}
 %   Let $G$ be a group. Then any constant map $\mathcal{A} : G \rightarrow G$ is an averaging operator on $G$.
%\end{exam}

\begin{exam}
Let $G$ be a group. Then any constant map, the identity map $\mathrm{id}: G \rightarrow G$ and the $n$-th power map $(-)^n : G \rightarrow G, g \mapsto g^n$ (for $n \in \mathbb{Z}$) are all averaging operators on $G$. In particular, the inverse map $( - )^{-1}: G \rightarrow G$ is an averaging operator.
\end{exam}

\begin{exam}
Let $G$ be a group and let $Z (G)$ be the centre of $G$. Then any map $\mathcal{A} : G \rightarrow Z(G) \subset G$ is an averaging operator on $G$. In particular, if $G$ is abelian then any map $\mathcal{A} : G \rightarrow G$ is an averaging operator on $G$. 
%It is further pointed if $\mathcal{A}(e) = e.$
\end{exam}

\begin{exam}
    Let $\mathcal{A} : G \rightarrow G$ be an averaging operator on $G$. Then the map $\widetilde{ \mathcal{A}} : G \rightarrow G$, $\widetilde{\mathcal{A}} (g) := \mathcal{A} (g^{-1})$ is an averaging operator on $G$.
    %As a consequence, the inverse map $(- )^{-1} : G \rightarrow G, x \mapsto x^{-1}$ is an averaging operator on $G$.
\end{exam}

\begin{exam}
    Let $G$ be a group and $\mathcal{A}: G \rightarrow G$ be an averaging operator on $G$. If $x \in G$ is an element that commutes with the images of $\mathcal{A}$, then the map $\widetilde{\mathcal{A}} : G \rightarrow G$ defined by $\widetilde{\mathcal{A}} (g) := x \mathcal{A} (g) = \mathcal{A} (g) x$ is an averaging operator on $G$.
\end{exam}

\begin{exam}
    Let $\mathcal{A}, \mathcal{B} : G \rightarrow G$ be two averaging operators on $G$ that commute with each other. Then the composition $\mathcal{A} \circ \mathcal{B}$ is an averaging operator on $G$.
\end{exam}

\begin{exam}
    Let $G$ be a group and $\mathcal{A}: G \rightarrow G$ be a pointed averaging operator on $G$ that is also a group homomorphism. Let $N \subset G$ be a normal subgroup such that $\mathcal{A}(N) \subset N$. Then $\mathcal{A}$ induces an averaging operator $\widetilde{\mathcal{A}} : G / N \rightarrow G/ N$ given by
    \begin{align*}
        \widetilde{\mathcal{A}} ( g N) =   \mathcal{A}(g) N, \text{ for } g N \in G/ N.
    \end{align*}
\end{exam}

\begin{exam}
Let $G, H$ be two groups and $\Phi: G \rightarrow \mathrm{Aut}(H)$ be a group homomorphism. Consider the semidirect product group $G \ltimes H$ with the operation
\begin{align*}
    (g, h) \cdot (g', h') = (gg' , h \Phi_g h'), \text{for }(g, h), (g', h') \in G \times H.
\end{align*}
If $\mathcal{A} : G \rightarrow G$ is an averaging operator on $G$, then the map $\widetilde{\mathcal{A}} : G \ltimes H \rightarrow G \ltimes H$, $\widetilde{\mathcal{A}} (g, h) = ( \mathcal{A} (g), e_H)$ is an averaging operator on the semidirect product $G \ltimes H$.
\end{exam}

\begin{exam}
Let $H, K$ be two normal subgroups of $G$ such that $G = H K$ and $H \cap K = \{ e \}$. For any averaging operator $\mathcal{A} : H \rightarrow H$, the map $\widetilde{\mathcal{A}} : G \rightarrow G$ defined by $\widetilde{\mathcal{A}} (hk) := \mathcal{A} (h) k$ is an averaging operator on $G$.
\end{exam}

%\begin{exam}
 %   Let $G$ be any group. Then for any $n \geq 0$, the map $\mathcal{A} : G \rightarrow G, x \mapsto x^n$ is an averaging operator on $G$.
%\end{exam}

\begin{prop}
    Let $\mathcal{A}: G \rightarrow G$ be an averaging operator on $G$. Then for each $n \in \mathbb{N}$, the map $\mathcal{A}^n: G \rightarrow G$ is an averaging operator.
\end{prop}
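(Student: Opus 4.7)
My plan is to proceed by induction on $n$, with the base case $n=1$ being the hypothesis. The key observation I would exploit is that the defining averaging identity
\begin{align*}
    \mathcal{A}(g)\, \mathcal{A}(h)\, \mathcal{A}(g)^{-1} = \mathcal{A}\big(\mathcal{A}(g)\, h\, \mathcal{A}(g)^{-1}\big)
\end{align*}
can be rewritten, for each fixed $g \in G$, as the commutation of set maps
\begin{align*}
    \mathcal{A} \circ \mathrm{Ad}_{\mathcal{A}(g)} = \mathrm{Ad}_{\mathcal{A}(g)} \circ \mathcal{A}.
\end{align*}
In other words, every inner automorphism of $G$ by an element lying in the image of $\mathcal{A}$ commutes with $\mathcal{A}$.

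Next, I would observe that for any $n \geq 1$ the element $\mathcal{A}^n(g) = \mathcal{A}(\mathcal{A}^{n-1}(g))$ lies in the image of $\mathcal{A}$, so by the observation above, $\mathrm{Ad}_{\mathcal{A}^n(g)}$ commutes with $\mathcal{A}$. A routine secondary induction on $k$ then shows that $\mathrm{Ad}_{\mathcal{A}^n(g)}$ commutes with every iterate $\mathcal{A}^k$; indeed, if it commutes with $\mathcal{A}^{k-1}$ and with $\mathcal{A}$, it commutes with their composition $\mathcal{A}^k$.

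Applying this with $k = n$ yields the chain
\begin{align*}
    \mathcal{A}^n(g)\, \mathcal{A}^n(h)\, \mathcal{A}^n(g)^{-1} = \mathrm{Ad}_{\mathcal{A}^n(g)} \mathcal{A}^n(h) = \mathcal{A}^n\big(\mathrm{Ad}_{\mathcal{A}^n(g)} h\big) = \mathcal{A}^n\big(\mathcal{A}^n(g)\, h\, \mathcal{A}^n(g)^{-1}\big),
\end{align*}
which is exactly the averaging identity for $\mathcal{A}^n$. The whole argument is essentially a verification: there is no serious obstacle, because the averaging identity already encodes precisely the commutation between $\mathcal{A}$ and conjugation by its own values that is needed to close the induction. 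The only mild subtlety is remembering that the commutation only holds for conjugation by elements in the image of $\mathcal{A}$, which is why the iteration works cleanly for $\mathcal{A}^n(g)$ but would not make sense for an arbitrary element of $G$.
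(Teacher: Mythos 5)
Your proof is correct, but it is organized differently from the paper's. The paper proves the statement by a single induction on $n$: assuming $\mathcal{A}^r$ is an averaging operator, it writes $\mathcal{A}^{r+1}(g)=\mathcal{A}^r(\mathcal{A}(g))$, applies the averaging identity for $\mathcal{A}^r$ to the pair $(\mathcal{A}(g),\mathcal{A}(h))$, and then applies the averaging identity for $\mathcal{A}$ to the pair $(\mathcal{A}^r(g),h)$ to close the induction. You instead extract the reformulation $\mathcal{A}\circ \mathrm{Ad}_{\mathcal{A}(g)}=\mathrm{Ad}_{\mathcal{A}(g)}\circ \mathcal{A}$ as the key lemma, note that $\mathcal{A}^n(g)$ lies in the image of $\mathcal{A}$ so that $\mathrm{Ad}_{\mathcal{A}^n(g)}$ commutes with $\mathcal{A}$, and then run your induction on the exponent $k$ of the commuting power $\mathcal{A}^k$ rather than on the claim that $\mathcal{A}^n$ is averaging. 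Your version buys a cleaner conceptual statement (the averaging identity is exactly commutation of $\mathcal{A}$ with conjugation by elements of its image) and in particular never needs the inductive hypothesis that lower powers $\mathcal{A}^r$ are themselves averaging operators; the paper's version is a more direct two-line computation that stays entirely inside the defining identity. Both are complete and correct, and your closing caveat --- that the commutation is only available for conjugation by elements of $\mathrm{im}(\mathcal{A})$ --- is exactly the right point to flag.
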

\begin{proof}
    Suppose $\mathcal{A}^r$ is an averaging operator, for some $r \in \mathbb{N}$. Then for any $g, h \in G$, we have
    \begin{align*}
        \mathcal{A}^{r+1} (g)  \mathcal{A}^{r+1} (h)  \mathcal{A}^{r+1} (g)^{-1} 
        &=  \mathcal{A}^{r} ( \mathcal{A} (g)) \mathcal{A}^{r} ( \mathcal{A} (h)) \mathcal{A}^{r} ( \mathcal{A} (g))^{-1}  \\
        &= \mathcal{A}^r \big(   \mathcal{A}^{r+1} (g)  \mathcal{A} (h)  \mathcal{A}^{r+1} (g)^{-1}  \big) \\
        &= \mathcal{A}^r \big(   \mathcal{A} (   \mathcal{A}^r (g)) \mathcal{A} (h)  \mathcal{A} (   \mathcal{A}^r (g))^{-1}   \big) \\
        &= \mathcal{A}^{r+1} \big(  \mathcal{A}^{r+1} (g) h \mathcal{A}^{r+1} (g)^{-1} \big).
    \end{align*}
    This shows that $\mathcal{A}^{r+1}$ is an averaging operator on $G$. Hence the result follows by the mathematical induction.
\end{proof}

%**********************************************************************

In the following, we give some characterizations of an averaging operator on a group. These characterizations are based on rack structures. 

\begin{defn}
A {\bf (left) rack} $(Q, \diamond)$ is a nonempty set $Q$ equipped with a binary operation $\diamond : Q \times Q \rightarrow Q$ that satisfy the following axioms:

- (left-distributivity)  for all $x, y, z \in Q$, $x \diamond (y \diamond z) = (x \diamond y) \diamond (x \diamond z)$,

- for any $x, y \in Q$, there exists a unique $c \in Q$ such that $x \diamond c = y$.

%- for any $x \in Q$, we have $\mathds{1} \diamond x = x$ and $x \diamond \mathds{1} = \mathds{1}$.
\end{defn}

A rack $(Q, \diamond)$ is said to be {\bf pointed} if there exists a distinguished element $e \in Q$ such that $x \diamond e  = e$ and $e \diamond x = x$, for all $x \in Q$. A pointed rack is denoted by the triple $(Q, \diamond, e)$.

\begin{remark}
\begin{itemize}
\item[(i)] Let $Q$ be a nonempty set. Define a binary operation $\diamond: Q \times Q \rightarrow Q$ by $x \diamond y:= y$, for all $x, y \in Q$. Then $(Q, \diamond)$ is a rack, called the trivial rack.

\item[(ii)] Let $(Q, \diamond)$ be a rack. Then for each $x \in Q$, the left translation $L_x := x \diamond -$ is a rack automorphism (i.e. $L_x : Q \rightarrow Q$ is bijective and $L_x (y \diamond z) = L_x (y) \diamond L_x (z)$, for all $y, z \in Q$). Moreover, for any $x, y \in Q$, we have $L_x L_y L_x^{-1} = L_{x \diamond y}$.

\item[(iii)] Racks can be realized as a generalization of the group conjugation. Explicitly, if $G$ is a group with the identity element $e \in G$, then the operation $g \diamond h := ghg^{-1}$ makes $(G, \diamond, e)$ into a pointed rack. This (pointed) rack is said to be induced from the group structure.
\end{itemize}
\end{remark}

Let $(Q, \diamond)$ be a rack. A {\bf subrack} of $(Q, \diamond)$ is a subset $S \subset Q$ such that $x \diamond y \in S$ whenever $x, y \in S$. Let $(Q, \diamond)$ and $(Q', \diamond')$ be two racks. A map $\varphi : Q \rightarrow Q'$ is said to be a {\bf rack homomorphism} if $\varphi (x \diamond y) = \varphi (x) \diamond' \varphi(y)$, for all $x, y \in Q.$

\begin{prop}\label{cong-rack}
Let $G$ be a group. Then $(G \times G, \diamond)$ is a rack, where the rack operation $\diamond$ is given by
\begin{align}\label{ad-rack}
    (g_1, g) \diamond (h_1, h) = (g_1 h_1 g_1^{-1}, g_1 h g_1^{-1}), \text{ for } (g_1, g), (h_1, h) \in G \times G.
\end{align}
We denote this rack simply by $G \times_\mathrm{Ad} G.$
\end{prop}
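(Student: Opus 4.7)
The plan is to verify the two rack axioms directly, since both reduce to elementary group-theoretic identities once one recognizes that the rack operation is controlled by the adjoint action $\mathrm{Ad}_{g_1}$ of the first coordinate on both factors.

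First I would rewrite the operation as $(g_1, g) \diamond (h_1, h) = (\mathrm{Ad}_{g_1}(h_1), \mathrm{Ad}_{g_1}(h))$ so that I can exploit the two basic properties $\mathrm{Ad}_{g_1 g_2} = \mathrm{Ad}_{g_1} \circ \mathrm{Ad}_{g_2}$ and the fact that each $\mathrm{Ad}_{g_1}$ is a group automorphism of $G$. For left-distributivity, I would expand both sides of
\[
(g_1, g) \diamond \bigl( (h_1, h) \diamond (k_1, k) \bigr) = \bigl( (g_1, g) \diamond (h_1, h) \bigr) \diamond \bigl( (g_1, g) \diamond (k_1, k) \bigr).
\]
The left-hand side becomes $(g_1 h_1 k_1 h_1^{-1} g_1^{-1},\, g_1 h_1 k h_1^{-1} g_1^{-1})$ in one step. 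For the right-hand side, applying $\diamond$ to the two already-conjugated pairs produces
\[
\bigl( (g_1 h_1 g_1^{-1})(g_1 k_1 g_1^{-1})(g_1 h_1 g_1^{-1})^{-1},\; (g_1 h_1 g_1^{-1})(g_1 k g_1^{-1})(g_1 h_1 g_1^{-1})^{-1} \bigr),
\]
which collapses to the same pair after cancelling the internal $g_1^{-1} g_1$'s. In $\mathrm{Ad}$-notation this is just the identity $\mathrm{Ad}_{\mathrm{Ad}_{g_1}(h_1)} \circ \mathrm{Ad}_{g_1} = \mathrm{Ad}_{g_1} \circ \mathrm{Ad}_{h_1}$, applied coordinate-wise.

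For the second axiom, given $(g_1, g)$ and $(h_1, h)$ in $G \times G$, I would solve the equation $(g_1, g) \diamond (c_1, c) = (h_1, h)$ coordinate-wise. Since each $\mathrm{Ad}_{g_1}$ is a bijection of $G$ with inverse $\mathrm{Ad}_{g_1^{-1}}$, the unique solution is $(c_1, c) = (g_1^{-1} h_1 g_1,\, g_1^{-1} h g_1)$, establishing unique solvability.

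There is no genuine obstacle here; it is essentially a calculation. If I wanted to present it conceptually rather than by computation, I could observe that for any group $G$ acting on any $G$-set $X$ by a map $(g, x) \mapsto g \cdot x$, the formula $(g, x) \diamond (g', x') := (g g' g^{-1},\, g \cdot x')$ defines a rack structure on $G \times X$; specializing to $X = G$ with the conjugation action then recovers the operation \eqref{ad-rack} and the proposition follows immediately. Either route works, and I would likely present the direct verification as it is shorter and self-contained.
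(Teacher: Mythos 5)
Your proposal is correct and follows essentially the same route as the paper: a direct expansion of both sides of the left-distributivity identity followed by exhibiting the explicit solution $(g_1^{-1} h_1 g_1,\, g_1^{-1} h g_1)$, with your appeal to the bijectivity of $\mathrm{Ad}_{g_1}$ making the uniqueness claim slightly more explicit than the paper's. Your alternative conceptual remark is in fact realized later in the paper as the rack $G \times_\Phi X$ for a general $G$-action, so that observation is consistent with the paper's own development.
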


\begin{proof}
    For $(g_1, g), (h_1, h)$ and $(k_1, k) \in G \times G,$ we have
    \begin{align*}
        (g_1, g) \diamond \big(  (h_1, h) \diamond (k_1, k)  \big) =~& (g_1, g) \diamond (h_1 k_1 h_1^{-1}, h_1 k h_1^{-1}) \\
        =~& (g_1 h_1 k_1 h_1^{-1} g_1^{-1}, g_1 h_1 k h_1^{-1} g_1^{-1}) \\
        =~& \big(  g_1 h_1 g_1^{-1} g_1 k_1 g_1^{-1} (g_1 h_1 g_1^{-1})^{-1} ,  g_1 h_1 g_1^{-1} g_1 k g_1^{-1} (g_1 h_1 g_1^{-1})^{-1} \big) \\
        =~& (g_1 h_1 g_1^{-1}, g_1 h g_1^{-1}) \diamond (g_1 k_1 g_1^{-1}, g_1 k g_1^{-1}) \\
        =~& \big(  (g_1, g) \diamond (h_1, h)  \big) \diamond \big( (g_1, g) \diamond (k_1, k)   \big).
    \end{align*}
    This shows that the left distributivity holds. Moreover, for any $(g_1, g) , (h_1, h) \in G \times G$, we have
    \begin{align*}
        (g_1, g) \diamond (g_1^{-1} h_1 g_1, g_1^{-1} h  g_1) = (h_1, h).
    \end{align*}
    Therefore, the equation $(g_1, g) \diamond c = (h_1, h)$ has a unique solution. %Finally, $(e,e) \diamond (g_1, g) = (g_1, g)$ and $(g_1, g) \diamond (e,e) = (e,e)$.
    This shows that $(G \times G, \diamond)$ is a rack.
\end{proof}

\begin{prop}\label{prop-graph-group}
    Let $G$ be a group. A set map $\mathcal{A} : G \rightarrow G$ is an averaging operator on the group $G$ if and only if the graph $Gr(\mathcal{A}) = \{  ( \mathcal{A}(g), g ) |~ g \in G \}$ is a subrack of $G \times_\mathrm{Ad} G.$
\end{prop}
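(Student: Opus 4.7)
The plan is to unwind both sides of the claimed equivalence directly from the definitions, since Proposition \ref{cong-rack} already gives us the rack operation on $G \times_{\mathrm{Ad}} G$ explicitly.

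First I would fix $g, h \in G$ and compute
\begin{align*}
(\mathcal{A}(g), g) \diamond (\mathcal{A}(h), h) = \big( \mathcal{A}(g)\mathcal{A}(h)\mathcal{A}(g)^{-1},\ \mathcal{A}(g) h \mathcal{A}(g)^{-1} \big)
\end{align*}
using formula (\ref{ad-rack}). Then I would note that an element of $G \times G$ lies in $Gr(\mathcal{A})$ precisely when its first coordinate equals $\mathcal{A}$ applied to its second coordinate. Thus the above product lies in $Gr(\mathcal{A})$ if and only if
\begin{align*}
\mathcal{A}(g)\mathcal{A}(h)\mathcal{A}(g)^{-1} = \mathcal{A}\big( \mathcal{A}(g) h \mathcal{A}(g)^{-1} \big),
\end{align*}
which is exactly identity (\ref{avg-ident}) for the pair $(g,h)$.

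This observation gives both directions simultaneously. For the forward implication, assuming $\mathcal{A}$ is averaging, the displayed equality holds for every $g, h \in G$, so the product always lies in $Gr(\mathcal{A})$ and thus $Gr(\mathcal{A})$ is a subrack. For the converse, assuming $Gr(\mathcal{A})$ is a subrack, the product of any two graph elements lies in $Gr(\mathcal{A})$; comparing second coordinates shows the unique $g' \in G$ providing this product must satisfy $g' = \mathcal{A}(g) h \mathcal{A}(g)^{-1}$, and then the first coordinate equation yields the averaging identity at $(g,h)$.

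There is essentially no obstacle here — the statement is a reformulation of the defining identity in graph-theoretic language, and the main thing to be careful about is simply observing that membership in $Gr(\mathcal{A})$ is uniquely determined by the second coordinate (so no quantifier juggling is required over which $g'$ to pick). No induction or auxiliary construction is needed; the entire argument fits in a short chain of equivalences.
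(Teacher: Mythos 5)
Your proposal is correct and follows essentially the same route as the paper: compute the rack product of two graph elements via formula (\ref{ad-rack}) and observe that membership of the result in $Gr(\mathcal{A})$ is exactly the averaging identity. The extra care you take in spelling out the two directions is fine but not needed beyond what the paper records.
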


\begin{proof}
    Let $g, h \in G$. Then we have
    \begin{align*}
        ( \mathcal{A} (g) , g) \diamond ( \mathcal{A} (h) , h) = (  \mathcal{A} (g) \mathcal{A} (h) \mathcal{A}(g)^{-1} , \mathcal{A}(g) h \mathcal{A}(g)^{-1}).
    \end{align*}
    This element is in the graph $Gr (\mathcal{A})$ if and only if $ \mathcal{A} (x) \mathcal{A} (y) \mathcal{A}(x)^{-1} = \mathcal{A} (  \mathcal{A}(x) y \mathcal{A}(x)^{-1})$, for all $x, y \in G$. In other words, $\mathcal{A}$ is an averaging operator on the group $G$.
\end{proof}

Let $G$ be a group and $\mathcal{A} : G \rightarrow G$ be any map. First, consider the rack $G \times_\mathrm{Ad} G$ given in Proposition \ref{cong-rack}. We define a map 
\begin{align*}
\xi_\mathcal{A} : G \times G \rightarrow G \times_\mathrm{Ad} G  ~~~ \text{ by } ~~~ \xi_\mathcal{A} (g, h) = (\mathcal{A} (h) g, h), \text{ for } (g, h) \in G \times G.
\end{align*}
The map $\xi_\mathcal{A}$ is obviously invertible with the inverse
    $\xi_\mathcal{A}^{-1} (g, h) = (  \mathcal{A} (h)^{-1} g, h ),$ for $(g, h) \in G \times_\mathrm{Ad} G.$
Hence by using the map $\xi_\mathcal{A}$ and its inverse $\xi_\mathcal{A}^{-1}$, one can transfer the rack structure of $G \times_\mathrm{Ad} G$ to the set $G \times G$. More precisely, the rack operation is given by
\begin{align*}
    (g_1, g) * (h_1, h) =~& \xi_\mathcal{A}^{-1} \big(  \xi_\mathcal{A} (g_1, g) \diamond \xi_\mathcal{A} (h_1, h)  \big) \\
    =~& \xi_\mathcal{A}^{-1} \big(  (  \mathcal{A}(g) g_1, g)  \diamond (  \mathcal{A}(h) h_1, h)    \big) \\
    =~& \xi_\mathcal{A}^{-1} \big(   \mathcal{A}(g) g_1  \mathcal{A}(h) h_1 g_1^{-1} \mathcal{A} (g)^{-1},  \mathcal{A}(g) g_1 h g_1^{-1} \mathcal{A} (g)^{-1}          \big) \\
    =~& \big(  \mathcal{A} \big( \mathcal{A}(g) g_1 h g_1^{-1} \mathcal{A} (g)^{-1} \big)^{-1}  \mathcal{A}(g) g_1  \mathcal{A}(h) h_1 g_1^{-1} \mathcal{A} (g)^{-1},  \mathcal{A}(g) g_1 h g_1^{-1} \mathcal{A} (g)^{-1} \big).
\end{align*}
We denote this rack simply by the notation $G \times_\mathrm{Ad}^\mathcal{A} G$. Then we have the following.

\begin{prop}
    Let $G$ be a group with the identity element $e$. Let $\mathcal{A}: G \rightarrow G$ be a map. Then $\mathcal{A}$ is an averaging operator on $G$ if and only if $\{ e \} \times G$ is a subrack of $G \times_\mathrm{Ad}^\mathcal{A} G$.
\end{prop}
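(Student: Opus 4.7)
The plan is a direct unravelling: by definition $\{e\} \times G$ is a subrack of $G \times_\mathrm{Ad}^\mathcal{A} G$ if and only if $(e, g) * (e, h) \in \{e\} \times G$ for every $g, h \in G$, so I would simply substitute $g_1 = h_1 = e$ into the explicit formula for the rack operation $*$ derived above and compare with the averaging identity.

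First, I would specialize the formula for $(g_1, g) * (h_1, h)$ at $g_1 = h_1 = e$. All the factors involving $g_1, h_1$ collapse, and the second coordinate reduces to $\mathcal{A}(g)\, h\, \mathcal{A}(g)^{-1}$ while the first coordinate becomes
\[
\mathcal{A}\!\left( \mathcal{A}(g)\, h\, \mathcal{A}(g)^{-1} \right)^{-1} \mathcal{A}(g)\, \mathcal{A}(h)\, \mathcal{A}(g)^{-1}.
\]
The second coordinate automatically lies in $G$, so membership in $\{e\} \times G$ is controlled entirely by requiring the first coordinate to equal $e$.

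Next, the condition that this first coordinate equals $e$ rearranges (by multiplying through by $\mathcal{A}(\mathcal{A}(g)\, h\, \mathcal{A}(g)^{-1})$ on the left) to
\[
\mathcal{A}(g)\, \mathcal{A}(h)\, \mathcal{A}(g)^{-1} \;=\; \mathcal{A}\!\left( \mathcal{A}(g)\, h\, \mathcal{A}(g)^{-1} \right),
\]
which is exactly the defining averaging identity for $\mathcal{A}$. Thus the equivalence follows immediately in both directions, yielding the claim.

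There is essentially no obstacle here: the substitution is routine and the algebra reduces cleanly to the averaging axiom. The only point worth stating carefully in the write-up is that we use the subrack definition which requires only closure under $*$, so no separate verification of the existence/uniqueness axiom is needed.
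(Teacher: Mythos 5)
Your proposal is correct and follows exactly the paper's own argument: specialize the explicit formula for $*$ to $(e,g)*(e,h)$, observe the second coordinate is automatically in $G$, and note that vanishing of the first coordinate is precisely the averaging identity. Your closing remark that the paper's subrack definition only requires closure under the operation is a worthwhile clarification, but the substance is identical.
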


\begin{proof}
Let $(e, g), (e, h) \in \{ e \} \times G$ be arbitrary. Then we have
\begin{align*}
    (e, g) * (e, g) = \bigg(   \big(  \mathcal{A} (   \mathcal{A} (g) h \mathcal{A}(g)^{-1}) \big)^{-1} \mathcal{A}(g) \mathcal{A}(h) \mathcal{A}(g)^{-1}, \mathcal{A}(g) h \mathcal{A}(g)^{-1}   \bigg).
\end{align*}
This is in $\{ e \} \times G$ if and only if $ \big(  \mathcal{A} (   \mathcal{A} (g) h \mathcal{A}(g)^{-1}) \big)^{-1} \mathcal{A}(g) \mathcal{A}(h) \mathcal{A}(g)^{-1} = e$. This is equivalent to the fact that $\mathcal{A}$ is an averaging operator on the group $G$.
\end{proof}

Next, we show that averaging operators on groups (i.e. averaging groups) are more intimately related to racks. 

\begin{prop}\label{prop-des}
    Let $\mathcal{A}: G \rightarrow G$ be an averaging operator on a group $G$.

    (i) Then $(G, \diamond_\mathcal{A})$ is a rack (called the descendent rack), where
    \begin{align}\label{indu-rack}
        g \diamond_\mathcal{A} h := \mathcal{A} (g) h \mathcal{A}(g)^{-1}, \text{ for } g,h \in G.
    \end{align}

If $\mathcal{A}$ is a pointed averaging operator then the descendent rack $(G, \diamond_\mathcal{A})$ is also pointed.

    (ii) The map $\mathcal{A}: G \rightarrow G$ is a homomorphism of racks from the descendent rack to the rack structure on $G$ induced from the group structure.
\end{prop}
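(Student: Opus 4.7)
My plan is to verify the rack axioms for $\diamond_{\mathcal{A}}$ directly from the averaging identity, and then observe that part (ii) is essentially a restatement of this identity.

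For part (i), the plan is to first check left distributivity: I would expand both $g \diamond_\mathcal{A}(h \diamond_\mathcal{A} k)$ and $(g \diamond_\mathcal{A} h) \diamond_\mathcal{A}(g \diamond_\mathcal{A} k)$ using the definition. The left-hand side unfolds to $\mathcal{A}(g)\mathcal{A}(h) k \mathcal{A}(h)^{-1}\mathcal{A}(g)^{-1}$. For the right-hand side, the outer $\mathcal{A}$ appears as $\mathcal{A}(\mathcal{A}(g) h \mathcal{A}(g)^{-1})$, which is precisely the right-hand side of the averaging identity \eqref{avg-ident} and therefore equals $\mathcal{A}(g)\mathcal{A}(h)\mathcal{A}(g)^{-1}$. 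After substituting, a cascade of cancellations $\mathcal{A}(g)^{-1}\mathcal{A}(g) = e$ collapses the expression to the same thing as the left-hand side. This is the only step that actually uses the averaging axiom; the rest of the verification of (i) does not.

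Next, for the unique-solvability axiom, given $g, h \in G$ the equation $g \diamond_\mathcal{A} c = \mathcal{A}(g) c \mathcal{A}(g)^{-1} = h$ has the unique solution $c = \mathcal{A}(g)^{-1} h \mathcal{A}(g)$ in the group, so uniqueness and existence both follow. For the pointed statement, if $\mathcal{A}(e) = e$ then $e \diamond_\mathcal{A} x = \mathcal{A}(e) x \mathcal{A}(e)^{-1} = x$ and $x \diamond_\mathcal{A} e = \mathcal{A}(x) e \mathcal{A}(x)^{-1} = e$, confirming that $e$ is a distinguished pointed element.

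For part (ii), I simply need to verify $\mathcal{A}(g \diamond_\mathcal{A} h) = \mathcal{A}(g) \diamond \mathcal{A}(h)$, where $\diamond$ is the conjugation rack on $G$. Unpacking both sides, this asks for $\mathcal{A}(\mathcal{A}(g) h \mathcal{A}(g)^{-1}) = \mathcal{A}(g)\mathcal{A}(h)\mathcal{A}(g)^{-1}$, which is exactly the defining identity \eqref{avg-ident} of an averaging operator.

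I do not anticipate any real obstacle: the only nontrivial computation is the expansion for left distributivity, and it is resolved in one application of \eqref{avg-ident}. The work is essentially bookkeeping of conjugations, and part (ii) is a direct reading of the averaging axiom.
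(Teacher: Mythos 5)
Your proof is correct, and all the computations check out: the left-distributivity verification reduces, after the single application of the averaging identity to rewrite $\mathcal{A}(g \diamond_\mathcal{A} h)$ as $\mathcal{A}(g)\mathcal{A}(h)\mathcal{A}(g)^{-1}$, to a telescoping cancellation that matches the left-hand side; unique solvability and the pointed condition are immediate; and part (ii) is, as you say, a literal restatement of the defining identity. However, your route for part (i) differs from the paper's. The paper does not verify the rack axioms directly: it invokes its Proposition \ref{prop-graph-group}, which states that $\mathcal{A}$ is an averaging operator if and only if the graph $Gr(\mathcal{A}) = \{(\mathcal{A}(g), g)\}$ is a subrack of the rack $G \times_{\mathrm{Ad}} G$, and then transfers the subrack structure to $G$ along the bijection $g \leftrightarrow (\mathcal{A}(g), g)$, observing that the transported operation is exactly $g \diamond_\mathcal{A} h = \mathcal{A}(g) h \mathcal{A}(g)^{-1}$. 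That argument buys economy (the rack axioms are checked once, for $G \times_{\mathrm{Ad}} G$, and reused) and it foreshadows the relative version (Proposition \ref{graph-prop}), where the same graph characterization is cited in place of a repeated computation. Your direct verification is self-contained and arguably more transparent about where the averaging identity enters --- you correctly isolate it as the single nontrivial step --- but it does not expose the graph-subrack mechanism that the paper leans on later. Both are complete proofs; the paper's proof of part (ii) coincides with yours.
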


\begin{proof}
    (i) It follows from Proposition \ref{prop-graph-group} that $Gr (\mathcal{A})$ is a subrack of $G \times_\mathrm{Ad} G$. Since $G$ is isomorphic to $Gr (\mathcal{A})$ via the correspondence $g \longleftrightarrow (\mathcal{A}(g), g)$, the above rack structure on $Gr (\mathcal{A})$ induces a rack structure on $G$. This rack structure on $G$ is precisely the one given in (\ref{indu-rack}).

    (ii) For any $g, h \in G$, we have
    \begin{align*}
        \mathcal{A} (g \diamond_\mathcal{A} h) = \mathcal{A} (   \mathcal{A}(g) h \mathcal{A}(g)^{-1}) = \mathcal{A}(g) \mathcal{A}(h) \mathcal{A}(g)^{-1} = \mathcal{A}(g) \diamond \mathcal{A} (h). 
    \end{align*}
    Hence the result follows.
\end{proof}

Let $G$ be a group. Then $G$ inherits its conjugation rack structure. The group structure and rack structure on $G$ satisfy a nice compatibility. More generally, if we have an averaging operator on a group, then the group structure and the induced rack structure satisfy the same compatibility. Motivated by this, we define the following.

\begin{defn}
    A {\bf group-rack} is a triple $(G, \cdot, \smallblackdiamond)$ in which $(G, \cdot)$ is a group, $(G, \smallblackdiamond)$ is a rack such that for each $x \in G$, the map $L_x^\smallblackdiamond = x \smallblackdiamond - : G \rightarrow G$ is a group homomorphism, i.e.
    \begin{align*}
         x \smallblackdiamond yz = (x \smallblackdiamond y) (x \smallblackdiamond z), \text{ for } x, y, z  \in G.
    \end{align*}
    Here we use the notation $x y$ for the product $x \cdot y$.
\end{defn}

\begin{remark}
    Let $(G, \cdot, \smallblackdiamond)$ be a group-rack in which the rack $(G, \smallblackdiamond)$ is pointed with distinguished element $e$ (the identity element of the group). Such a group-rack is called {\bf pointed}. Then for any $x , y \in G$, we observe that $(x 
 \smallblackdiamond y) (x \smallblackdiamond y^{-1}) = x \smallblackdiamond e = e$. This shows that $(x \smallblackdiamond y)^{-1} = x \smallblackdiamond y^{-1}$.
\end{remark}

\begin{prop}
    Let $(G, \cdot)$ be a group and $\mathcal{A} : G \rightarrow G$ be an averaging operator on $G$. Then $(G, \cdot, \diamond_\mathcal{A})$ is a group-rack. Further, if $\mathcal{A}$ is a pointed averaging operator then $(G, \cdot, \diamond_\mathcal{A})$ is a pointed group-rack.
\end{prop}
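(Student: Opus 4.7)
The plan is to invoke Proposition \ref{prop-des} to handle the rack axioms and then verify the one remaining condition in the definition of a group-rack, namely that every left translation of $\diamond_\mathcal{A}$ is a group homomorphism.

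First I would recall from Proposition \ref{prop-des}(i) that whenever $\mathcal{A}$ is an averaging operator on $G$, the operation $g \diamond_\mathcal{A} h := \mathcal{A}(g) h \mathcal{A}(g)^{-1}$ already endows $G$ with a rack structure, and that this rack is pointed with distinguished element $e$ whenever $\mathcal{A}$ is pointed. So nothing new is needed on the rack side.

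The remaining step is to check that for each $x \in G$, the map $L_x^{\diamond_\mathcal{A}} : G \to G$, $y \mapsto x \diamond_\mathcal{A} y$, is a group homomorphism with respect to $\cdot$. This reduces to the direct computation
\begin{align*}
x \diamond_\mathcal{A} (y z)
= \mathcal{A}(x) (yz) \mathcal{A}(x)^{-1}
= \bigl(\mathcal{A}(x) y \mathcal{A}(x)^{-1}\bigr) \bigl(\mathcal{A}(x) z \mathcal{A}(x)^{-1}\bigr)
= (x \diamond_\mathcal{A} y)(x \diamond_\mathcal{A} z),
\end{align*}
obtained by inserting $\mathcal{A}(x)^{-1} \mathcal{A}(x) = e$ between the copies of $y$ and $z$. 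Notice that the averaging identity is not used here at all; the homomorphism property of $L_x^{\diamond_\mathcal{A}}$ comes entirely from the fact that conjugation by any fixed element of a group is an automorphism. (The averaging identity was precisely what was needed in Proposition \ref{prop-des} to ensure that $\diamond_\mathcal{A}$ is closed and left-distributive.)

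There is no genuine obstacle in this proof; the only point to watch is to keep the two uses of the averaging identity (for the rack axioms, already proved) and of the group-automorphism property of conjugation (for the compatibility $x \diamond_\mathcal{A} (yz) = (x \diamond_\mathcal{A} y)(x \diamond_\mathcal{A} z)$) clearly separated. The pointed statement is then immediate: if $\mathcal{A}(e) = e$, then the induced rack has $e$ as its distinguished element by Proposition \ref{prop-des}(i), so $(G, \cdot, \diamond_\mathcal{A})$ is a pointed group-rack in the sense of the remark preceding the proposition.
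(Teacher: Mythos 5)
Your proof is correct and follows exactly the paper's argument: cite Proposition \ref{prop-des} for the rack structure (and its pointedness when $\mathcal{A}(e)=e$), then verify the compatibility $x \diamond_\mathcal{A} (yz) = (x \diamond_\mathcal{A} y)(x \diamond_\mathcal{A} z)$ by inserting $\mathcal{A}(x)^{-1}\mathcal{A}(x)$. Your added observation that the averaging identity plays no role in this last computation is accurate but does not change the route.
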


\begin{proof}
    We have already seen that $\diamond_\mathcal{A}$ defines a rack structure on $G$. For any $x, y, z \in G$, we also have 
    \begin{align*}
        x \diamond_\mathcal{A} yz = \mathcal{A} (x) yz \mathcal{A}(x)^{-1} = \mathcal{A} (x) y \mathcal{A} (x)^{-1} \mathcal{A} (x) z \mathcal{A} (x)^{-1} = (x \diamond_\mathcal{A} y) (x \diamond_\mathcal{A} z).
    \end{align*}
    Hence the first part follows. The second part is easy.
\end{proof}

Thus it follows that an averaging group gives rise to a group-rack. The following result gives a partial converse.

\begin{thm}
    Every pointed group-rack can be embedded into an averaging group.
\end{thm}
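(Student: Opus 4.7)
The plan is to realize the target averaging group as the holomorph $H := G \rtimes \operatorname{Aut}(G)$ with the usual semidirect-product law $(g,\phi)(h,\psi) = (g\,\phi(h),\, \phi\psi)$, and to define an averaging operator $\mathcal{A} : H \to H$ that remembers only the rack-left-translation in the $\operatorname{Aut}(G)$-slot. Given a pointed group-rack $(G, \cdot, \smallblackdiamond)$ with distinguished element $e$, the key preliminary observation is that the map $L^{\smallblackdiamond} : G \to \operatorname{Aut}(G)$, $x \mapsto L_x^{\smallblackdiamond} := x \smallblackdiamond (-)$, is well-defined: the group-rack axiom makes $L_x^{\smallblackdiamond}$ a group homomorphism, the rack axioms make it bijective, and pointedness gives $L_e^{\smallblackdiamond} = \mathrm{id}$.

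With this in hand, I would set $\mathcal{A}(g, \phi) := (e,\, L_g^{\smallblackdiamond})$ for all $(g, \phi) \in H$, and verify the averaging identity (\ref{avg-ident}) by direct computation in $H$. On the left-hand side, the three products in the semidirect product collapse to $\big(e,\ L_g^{\smallblackdiamond} L_h^{\smallblackdiamond} (L_g^{\smallblackdiamond})^{-1}\big)$, since conjugation against factors whose group coordinate is $e$ acts trivially on the group slot. On the right-hand side, one first computes
\[
\mathcal{A}(g,\phi)(h,\psi)\mathcal{A}(g,\phi)^{-1} \;=\; \big(g \smallblackdiamond h,\ L_g^{\smallblackdiamond}\,\psi\,(L_g^{\smallblackdiamond})^{-1}\big),
\]
after which applying $\mathcal{A}$ discards the second slot and replaces the first, yielding $\big(e,\ L_{g \smallblackdiamond h}^{\smallblackdiamond}\big)$. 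Both sides coincide via the standard rack identity $L_{x \smallblackdiamond y}^{\smallblackdiamond} = L_x^{\smallblackdiamond} L_y^{\smallblackdiamond} (L_x^{\smallblackdiamond})^{-1}$. Pointedness further gives $\mathcal{A}(e_H) = (e, \mathrm{id}) = e_H$, so $\mathcal{A}$ is even a pointed averaging operator on $H$.

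It remains to exhibit the embedding. The map $\iota : G \hookrightarrow H$, $g \mapsto (g, \mathrm{id})$, is clearly an injective group homomorphism, and a short computation shows that the image is closed under the descendent rack, with
\[
\iota(g) \diamond_{\mathcal{A}} \iota(h) \;=\; (e,\, L_g^{\smallblackdiamond})(h, \mathrm{id})(e,\, (L_g^{\smallblackdiamond})^{-1}) \;=\; (L_g^{\smallblackdiamond}(h),\, \mathrm{id}) \;=\; \iota(g \smallblackdiamond h).
\]
Thus $\iota$ intertwines $\smallblackdiamond$ with $\diamond_{\mathcal{A}}$, exhibiting $(G, \cdot, \smallblackdiamond)$ as a sub-group-rack of the averaging group $(H, \mathcal{A})$ equipped with its canonical descendent rack.

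The only real design decision — and the thing that makes the argument work — is the ansatz for $\mathcal{A}$: one must make $\mathcal{A}$ constant in the group coordinate (always $e$) and automorphism-valued via $L^{\smallblackdiamond}$ in the second coordinate, so that both sides of the averaging identity collapse, through the rack identity, to the same element $(e, L_{g \smallblackdiamond h}^{\smallblackdiamond})$. Once this ansatz is written down, every verification is a direct unwinding of definitions, so I do not expect any deeper technical obstacle.
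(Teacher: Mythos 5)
Your proposal is correct and takes essentially the same route as the paper: both form a semidirect product of $G$ with a group of automorphisms of $(G,\cdot)$ containing the left translations $L_x^{\smallblackdiamond}$ (the paper uses the subgroup $\mathrm{Inn}(G,\smallblackdiamond)$ they generate, you use all of $\mathrm{Aut}(G,\cdot)$, which changes nothing), define the averaging operator by forgetting the automorphism slot and placing $L_g^{\smallblackdiamond}$ there while sending the group slot to $e$, and embed $G$ via $g\mapsto (g,\mathrm{id})$. Your write-up actually supplies the verification of the averaging identity that the paper declares ``easy to verify,'' and the computations check out.
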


\begin{proof}
    Let $(G, \cdot, \smallblackdiamond)$ be a pointed group-rack. Consider the subgroup $\mathrm{Inn} (G, \smallblackdiamond)$ of the automorphism group $\mathrm{Aut}(G, \smallblackdiamond)$ generated by the set $\{ L_x^\smallblackdiamond = x \smallblackdiamond - | ~ x \in Q \}$. Since each $L_x^\smallblackdiamond$ is a group homomorphism of $(G, \cdot)$, we have the inclusion map
    \begin{align*}
        \Psi : \mathrm{Inn}(G, \smallblackdiamond) \hookrightarrow \mathrm{Aut} (G, \cdot), ~ \Psi (L_x^\smallblackdiamond) = L_x^\smallblackdiamond.
    \end{align*}
    The map $\Psi$ is a group homomorphism. That is the group $\mathrm{Inn}(G, \smallblackdiamond)$ acts on the group $(G, \cdot)$ by automorphisms. Therefore, the cartesian product $\mathrm{Inn} (G, \smallblackdiamond) \times G$ carries a group structure with the semidirect product composition
    \begin{align*}
        (L_x^\smallblackdiamond , y) (L_{x'}^\smallblackdiamond, y') = (L_x^\smallblackdiamond L^\smallblackdiamond_{x'}, y L_x^\smallblackdiamond (y') ), \text{ for } (L_x^\smallblackdiamond, y), (L_{x'}^\smallblackdiamond, y') \in \mathrm{Inn} (G, \smallblackdiamond) \times G.
    \end{align*}
    We define a map $\mathcal{A} : \mathrm{Inn} (G, \smallblackdiamond) \times G \rightarrow \mathrm{Inn} (G, \smallblackdiamond) \times G$ by $\mathcal{A} (L_x^\smallblackdiamond, y) = (L_y^\smallblackdiamond, e)$. Then it is easy to verify that $\mathcal{A}$ is an averaging operator on the group $\mathrm{Inn} (G, \smallblackdiamond) \times G$. In other words, $(\mathrm{Inn} (G, \smallblackdiamond) \times G, \mathcal{A})$ is an averaging group. Finally, it is straightforward to check that the inclusion map $i : G \hookrightarrow \mathrm{Inn} (G, \smallblackdiamond) \times G$, $x \mapsto (\mathrm{Id}, x)$ is a morphism of group-racks, where the group-rack structure on $\mathrm{Inn} (G, \smallblackdiamond) \times G$ is induced by the averaging operator $\mathcal{A}$. This completes the proof.
\end{proof}

Next, we recall averaging operators on a Lie algebra \cite{aguiar} and showed that the differentiation of a smooth pointed averaging operator on a Lie group gives an averaging operator of the corresponding Lie algebra.

\begin{defn}
   (i) Let $\mathfrak{g} = (\mathfrak{g}, [~,~])$ be a Lie algebra. An {\bf averaging operator} on the Lie algebra $\mathfrak{g}$ is a linear map $P : \mathfrak{g} \rightarrow \mathfrak{g}$ that satisfies
    \begin{align}\label{embed-ten}
        [P(x) , P(y)] = P[P(x), y], \text{ for all } x, y \in \mathfrak{g}.
    \end{align}

    (ii)  An {\bf averaging Lie algebra} is a Lie algebra $\mathfrak{g}$ equipped with a distinguished averaging operator $P: \mathfrak{g} \rightarrow \mathfrak{g}$. We denote an averaging Lie algebra as above by the pair $(\mathfrak{g}, P).$
\end{defn}
Note that the condition (\ref{embed-ten}) is equivalent to $[P(x) , P(y)] = P[x, P(y)]$, for all $x, y \in \mathfrak{g}$. Indeed we have
\begin{align*}
    [P(x), P(y)] = - [P(y), P(x)] \stackrel{(\ref{embed-ten})}{=} - P [P(y), x] = P [x, P(y)].
\end{align*}

In literature, averaging operators on a Lie algebra are also known as embedding tensors \cite{kotov-strobl,bon,sheng-tang-zhu}. They are related to averaging (associative) algebras by skew-symmetrization. Namely, an {\em averaging algebra} is a pair $(A, P)$ consisting of an associative algebra $A$ with a linear map $P : A \rightarrow A$ that satisfies
\begin{align}\label{avg-ass}
    P(a) P(b) = P (P(a) b) = P (a P(b)), \text{ for } a, b \in A.
\end{align}
The identity (\ref{avg-ass}) is called the averaging identity and the map $P$ is called an averaging operator on the algebra $A$. If $(A, P)$ is an averaging algebra then $(A_c, P)$ is an averaging Lie algebra, where $A_c$ is the vector space $A$ endowed with the commutator Lie bracket.

%The following result shows that an averaging Lie algebra always induces a Leibniz algebra structure.

%\begin{prop}
%    Let $(\mathfrak{g}, A)$ be an averaging Lie algebra. Then the vector space $\mathfrak{g}$ inherits a Leibniz algebra structure with the bracket
%    \begin{align*}
 %       \{ x, y \} := [A(x), y], \text{ for } x, y \in \mathfrak{g}.
%    \end{align*}
%    This is called the descendent Leibniz algebra of the averaging Lie algebra $(\mathfrak{g}, A).$
%\end{prop}

\begin{thm}\label{firr-thm}
    Let $G$ be a Lie group and $\mathcal{A}: G \rightarrow G$ be a smooth pointed averaging operator on $G$. If $\mathfrak{g}$ is the Lie algebra of the Lie group $G$, then $P = \mathcal{A}_{* e}: \mathfrak{g} \rightarrow \mathfrak{g}$ is an averaging operator on the Lie algebra $\mathfrak{g}$. In other words, $(\mathfrak{g}, P)$ is an averaging Lie algebra.
\end{thm}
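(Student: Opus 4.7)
The plan is to differentiate the group-level averaging identity twice in a specific order, first in the $h$-variable and then in the $g$-variable, to extract the Lie algebra averaging identity. Throughout, I will use that $\mathcal{A}$ is pointed, i.e.\ $\mathcal{A}(e) = e$, so that whenever we set a variable equal to the identity the outputs still live in a tangent space at $e$.

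Fix $g \in G$ arbitrarily, and let $h(t)$ be a smooth curve in $G$ with $h(0) = e$ and $\dot h(0) = y \in \mathfrak{g}$. Apply $\frac{d}{dt}\big|_{t=0}$ to both sides of the identity
\begin{align*}
    \mathcal{A}(g)\, \mathcal{A}(h(t))\, \mathcal{A}(g)^{-1} \;=\; \mathcal{A}\bigl(\mathcal{A}(g)\, h(t)\, \mathcal{A}(g)^{-1}\bigr).
\end{align*}
Since $\mathcal{A}(h(0)) = e$ and $h(0) = e$, both sides evaluate to $e$ at $t=0$, and by the chain rule we obtain
\begin{align*}
    \mathrm{Ad}_{\mathcal{A}(g)}\bigl(P(y)\bigr) \;=\; P\bigl(\mathrm{Ad}_{\mathcal{A}(g)}(y)\bigr), \qquad \forall\, g \in G,\ y \in \mathfrak{g}. \tag{$\star$}
\end{align*}
Here I use that $\frac{d}{dt}\big|_{t=0}\mathcal{A}(h(t)) = P(y)$ by definition of $P$, and that conjugation by $\mathcal{A}(g)$ differentiates to $\mathrm{Ad}_{\mathcal{A}(g)}$ on $\mathfrak{g}$.

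Next, let $g(s)$ be a smooth curve with $g(0) = e$ and $\dot g(0) = x \in \mathfrak{g}$. Applying $\frac{d}{ds}\big|_{s=0}$ to $(\star)$ with $g = g(s)$ and fixed $y$, we again land in $\mathfrak{g}$, since both sides equal $P(y)$ when $s = 0$ (using $\mathcal{A}(e) = e$). Using the standard fact that for any smooth curve $\gamma(s) \in G$ with $\gamma(0) = e$ and $\dot\gamma(0) = v$, one has $\frac{d}{ds}\big|_{s=0}\mathrm{Ad}_{\gamma(s)}(z) = [v, z]$ for all $z \in \mathfrak{g}$, together with $\frac{d}{ds}\big|_{s=0}\mathcal{A}(g(s)) = P(x)$, the left-hand side of $(\star)$ differentiates to $[P(x), P(y)]$ while the right-hand side differentiates, via the linearity of $P$ and the chain rule, to $P\bigl([P(x), y]\bigr)$. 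Equating the two,
\begin{align*}
    [P(x), P(y)] \;=\; P\bigl([P(x), y]\bigr), \qquad \forall\, x, y \in \mathfrak{g},
\end{align*}
which is precisely the Lie algebra averaging operator identity (\ref{embed-ten}). Therefore $(\mathfrak{g}, P)$ is an averaging Lie algebra.

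The only real care needed is in the bookkeeping of base points for the derivatives: at each stage both sides of the identity being differentiated happen to take value $e$ (or $P(y)$) at the distinguished parameter, so the differentials are genuine elements of $\mathfrak{g}$ and the chain rule applies cleanly. No harder ingredient than the two standard formulas ``conjugation differentiates to $\mathrm{Ad}$'' and ``$\mathrm{Ad}$ differentiates to the bracket'' is required.
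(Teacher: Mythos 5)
Your proof is correct and follows essentially the same route as the paper: both differentiate the group-level identity $\mathcal{A}(g)\mathcal{A}(h)\mathcal{A}(g)^{-1}=\mathcal{A}(\mathcal{A}(g)h\mathcal{A}(g)^{-1})$ twice at the identity, using that conjugation differentiates to $\mathrm{Ad}$ and $\mathrm{Ad}$ differentiates to the bracket. The only difference is organizational — you split the computation into two single derivatives with the intermediate equivariance identity $(\star)$, whereas the paper takes one mixed second derivative along exponential curves — and your version arguably makes the base-point bookkeeping more transparent.
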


\begin{proof}
Let $\mathrm{exp}: \mathfrak{g} \rightarrow G$ be the exponential map. Then the Lie bracket of $\mathfrak{g}$ is given by
\begin{align*}
    [x, y] = \frac{d^2}{dt ds}\big|_{t, s = 0} \mathrm{exp}^{tx} \mathrm{exp}^{sy} \mathrm{exp}^{-tx}, \text{ for } x,y \in \mathfrak{g}.
\end{align*}
Hence for any $x, y \in \mathfrak{g}$, we have
    \begin{align*}
        [P(x), P(y)] =~& \frac{d^2}{dt ds} \bigg|_{t, s = 0} \mathrm{exp}^{t P(x)} ~ \mathrm{exp}^{s P(y)} ~ \mathrm{exp}^{-t P(x)} \\
        =~& \frac{d^2}{dt ds} \bigg|_{t, s = 0} \mathcal{A} ( \mathrm{exp}^{t x} ) \mathcal{A} (  \mathrm{exp}^{s y}  ) \mathcal{A} ( \mathrm{exp}^{tx} )^{-1} ~~~ (\text{as } P = \mathcal{A}_{* e}) \\
        =~& \frac{d^2}{dt ds} \bigg|_{t, s = 0} \mathcal{A} \big(  \mathcal{A} ( \mathrm{exp}^{t x} ) ~ \mathrm{exp}^{s y} ~ \mathcal{A} ( \mathrm{exp}^{tx} )^{-1}  \big) \\
        =~& \mathcal{A}_{* e} \big(  \frac{d^2}{dt ds} \bigg|_{t, s = 0} \mathrm{Ad}_{ \mathcal{A} (\mathrm{exp}^{tx})} \mathrm{exp}^{sy}  \big) = P ([P(x) , y]).
    \end{align*}
    This shows that $P$ is an averaging operator on the Lie algebra $\mathfrak{g}$.
\end{proof}

In the following, we generalize averaging operators in the presence of a group action. We observe that some of the above results still hold in the general context. Let $G$ be a group with the identity element $e$. Suppose that $G$ acts on a set $X$ from the left. That is, there is a map (called the action map) $\Phi: G \times X \rightarrow X$, $(g, x) \mapsto \Phi_g x$ satisfying
\begin{align*}
    \Phi_e (x) = x ~~~~ \text{ and } ~~~~ \Phi_{gh} (x) = \Phi_g (\Phi_h (x)), \text{ for all } g, h \in G, x \in X.
\end{align*}
Note that the group $G$ acts on itself by the adjoint action $\mathrm{Ad}: G \times G \rightarrow G$,  $(g, h) \mapsto \mathrm{Ad}_g h = ghg^{-1}$, for all $g, h \in G$.

\begin{prop}
    Let $G$ be a group and $\Phi: G \times X \rightarrow X$ be an action of $G$ on a set $X$. Then the cartesian product $G \times X$ carries a rack structure with the binary operation
    \begin{align*}
        (g, x) \diamond (h, y) := (ghg^{-1}, \Phi_g y), \text{ for } (g,x), (h, y) \in G \times X.
    \end{align*}
    We denote this rack simply by $G \times_\Phi X.$
\end{prop}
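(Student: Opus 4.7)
The plan is to verify the two rack axioms directly, mimicking the proof of Proposition \ref{cong-rack}. The statement generalizes that earlier result, which handled the special case $X = G$ with $\Phi = \mathrm{Ad}$, so the strategy is essentially the same: the $G$-component satisfies left-distributivity because conjugation is a rack operation, and the $X$-component will satisfy it precisely because $\Phi$ is a group action.

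First I would check left-distributivity. Given $(g,x), (h,y), (k,z) \in G \times X$, expand
\begin{align*}
    (g,x) \diamond \bigl((h,y) \diamond (k,z)\bigr) = (g,x) \diamond (hkh^{-1}, \Phi_h z) = (ghkh^{-1}g^{-1}, \Phi_g \Phi_h z),
\end{align*}
and on the other hand
\begin{align*}
    \bigl((g,x) \diamond (h,y)\bigr) \diamond \bigl((g,x) \diamond (k,z)\bigr) &= (ghg^{-1}, \Phi_g y) \diamond (gkg^{-1}, \Phi_g z) \\
    &= \bigl((ghg^{-1})(gkg^{-1})(ghg^{-1})^{-1}, \Phi_{ghg^{-1}} \Phi_g z\bigr).
\end{align*}
The first coordinates agree by the usual group calculation, while the second coordinates coincide because the action property $\Phi_{gh} = \Phi_g \Phi_h$ (and invertibility $\Phi_{g^{-1}} = \Phi_g^{-1}$) gives $\Phi_{ghg^{-1}} \Phi_g = \Phi_{gh} = \Phi_g \Phi_h$.

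Next I would show that for any $(g,x), (h,y) \in G \times X$, the equation $(g,x) \diamond c = (h,y)$ has a unique solution. Writing $c = (h_1, y_1)$, the equation reads $(g h_1 g^{-1}, \Phi_g y_1) = (h, y)$, so the only candidate is $c = (g^{-1} h g, \Phi_{g^{-1}} y)$, and one checks that this choice indeed satisfies the equation. Existence uses the bijectivity of $\Phi_g$ on $X$; uniqueness follows from the fact that both $h \mapsto g h g^{-1}$ and $\Phi_g$ are bijections.

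There is no real obstacle here: the argument is a direct two-line verification in each coordinate, and the only thing beyond the proof of Proposition \ref{cong-rack} is bookkeeping of the action axioms on the $X$-factor. The statement does not need pointedness or any extra hypothesis on $\Phi$, so no subtlety arises.
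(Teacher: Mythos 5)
Your proof is correct and follows essentially the same route as the paper's: both verify left-distributivity coordinatewise using the group-conjugation calculation in the first factor and the action axiom $\Phi_{ghg^{-1}}\Phi_g = \Phi_g\Phi_h$ in the second, and both exhibit the unique solution $c = (g^{-1}hg, \Phi_{g^{-1}}y)$. No differences worth noting.
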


\begin{proof}
    For any $(g, x), (h, y)$ and $(k, z) \in G \times X$, we have
    \begin{align*}
        (g, x) \diamond (( h, y) \diamond (k, z)) =~& (g, x) \diamond (hkh^{-1}, \Phi_h z) \\
        =~& ( ghkh^{-1} g^{-1}, \Phi_g \Phi_h z ) \\
        =~& \big( ghg^{-1} (gkg^{-1}) (ghg^{-1})^{-1}, \Phi_{ghg^{-1}} \Phi_g z   \big) \\
        =~& ( ghg^{-1}, \Phi_g y) \diamond (gkg^{-1}, \Phi_g z) \\
        =~& \big(  (g, x)  \diamond (h, y)  \big) \diamond \big(  (g, x) \diamond (k, z)  \big).
    \end{align*}
    This proves the left-distributivity of $\diamond$. Next, let $(g, x), (h, y) \in G \times X$ be two arbitrary elements. Then it is easy to see that the equation $(g, x) \diamond c = (h, y)$ has unique solution $c = (g^{-1} h g, \Phi_{g^{-1}} y)$. This proves that $(G \times X, \diamond)$ is a rack.
\end{proof}

\begin{defn}\label{rel-defn}
Let $G$ be a group and $\Phi: G \times X \rightarrow X$ be an action of $G$ on a set $X$.
    A {\bf relative averaging operator} (with respect to the action $\Phi$) is a set-map $\mathcal{B} : X \rightarrow G$ that satisfies
    \begin{align}\label{rel-avg}
        \mathcal{B}(x) \mathcal{B}(y) \mathcal{B}(x)^{-1} = \mathcal{B} (\Phi_{\mathcal{B} (x)} y), \text{ for } x, y \in X.
    \end{align}
\end{defn}

   Note that the condition (\ref{rel-avg}) can be equivalently written as $\mathrm{Ad}_{\mathcal{B} (x)} \mathcal{B} (y) = \mathcal{B} (\Phi_{\mathcal{B} (x)} y),$ for $x, y \in X.$ Further, any averaging operator on a group $G$ is a relative averaging operator with respect to the adjoint action $\mathrm{Ad}$.

   \begin{exam}
Let $G$ be a group and $N \subset G$ be a normal subgroup. Then the inclusion map $i: N \hookrightarrow G$ is a relative averaging operator, where $G$ acts on $N$ by the adjoint action.
\end{exam}

\begin{exam}
    Let $G$ be a group and $N \subset G$ be a normal subgroup. Suppose $\Phi: G \times X \rightarrow X$ is a group action that satisfies $\Phi_n x = x$, for all $n \in N$ and $x \in X$. Then $\Phi$ induces a group action $\widetilde{\Phi} : G/ N \times X \rightarrow X$ given by $\widetilde{\Phi}_{gN} x = \Phi_g x$, for $gN \in G/N$. If $\mathcal{A}: X \rightarrow G$ is a relative averaging operator (with respect to the action $\Phi$) then the map $\widetilde{\mathcal{A}}: X \rightarrow G/N$, $\widetilde{\mathcal{A}} (x) = \mathcal{A}(x) N$ is a relative averaging operator with respect to the action $\widetilde{\Phi}.$ 
\end{exam}

   The next two results generalizes Proposition \ref{prop-graph-group} and Proposition \ref{prop-des} in the relative context. Since the proofs are similar to the earlier results, we do not repeat them here.

\begin{prop}\label{graph-prop}
   Let $G$ be a group and $\Phi: G \times X \rightarrow X$ be an action of $G$ on a set $X$. A map $\mathcal{B}: X \rightarrow G$ is a relative averaging operator (with respect to the action $\Phi$) if and only if  the graph $Gr (\mathcal{B}) = \{ ( \mathcal{B}(x), x) |~ x \in X \}$ is subrack of $G \times_\Phi X.$
\end{prop}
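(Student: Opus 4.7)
The plan is to mimic exactly the argument used for Proposition \ref{prop-graph-group}, simply replacing the adjoint action by the general action $\Phi$. By the paper's definition, $Gr(\mathcal{B})$ is a subrack of $G \times_\Phi X$ precisely when it is closed under the rack operation, which on $G \times_\Phi X$ is given by $(g, x) \diamond (h, y) = (ghg^{-1}, \Phi_g y)$. Thus the only thing to verify is that closure of $Gr(\mathcal{B})$ under $\diamond$ is equivalent to identity (\ref{rel-avg}).

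First I would pick two arbitrary elements of $Gr(\mathcal{B})$, namely $(\mathcal{B}(x), x)$ and $(\mathcal{B}(y), y)$ for $x, y \in X$, and compute their rack product directly from the definition:
\begin{align*}
(\mathcal{B}(x), x) \diamond (\mathcal{B}(y), y) = \bigl( \mathcal{B}(x)\, \mathcal{B}(y)\, \mathcal{B}(x)^{-1},\ \Phi_{\mathcal{B}(x)} y \bigr).
\end{align*}
This element belongs to $Gr(\mathcal{B})$ if and only if its first coordinate equals $\mathcal{B}$ applied to its second coordinate, i.e.
\begin{align*}
\mathcal{B}(x)\, \mathcal{B}(y)\, \mathcal{B}(x)^{-1} = \mathcal{B}\bigl( \Phi_{\mathcal{B}(x)} y \bigr).
\end{align*}

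Finally, I would observe that this is exactly the relative averaging identity (\ref{rel-avg}). Since the above computation is valid for arbitrary $x, y \in X$, demanding the subrack closure for all pairs is the same as demanding (\ref{rel-avg}) for all $x, y$, which proves both implications simultaneously. There is no real obstacle; the argument is a one-line unwinding of definitions, and the only reason the relative setting needs to be noted at all is that the adjoint conjugation factor in $G \times_{\mathrm{Ad}} G$ has been replaced by the action $\Phi$ on the second component, which is precisely what the right-hand side of (\ref{rel-avg}) was designed to capture.
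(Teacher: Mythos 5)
Your proof is correct and is exactly the argument the paper intends: the paper omits the proof of Proposition \ref{graph-prop}, stating that it is the same as that of Proposition \ref{prop-graph-group} with the adjoint action replaced by $\Phi$, which is precisely what you carried out.
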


%\begin{proof}
 %   For any $x, y \in Q$, consider the elements $(\mathcal{B} (x), x), (\mathcal{B} (y), y) \in Gr (\mathcal{B})$. Then we have
  %  \begin{align*}
   %     (\mathcal{B} (x), x) \diamond (\mathcal{B} (y), y) = \big( \mathcal{B} (x)  \mathcal{B} (y) \mathcal{B} (x)^{-1}, \Phi_{\mathcal{B} (x) } y  \big).
    %\end{align*}
    %This is in $Gr (\mathcal{B})$ if and only if $Gr (\mathcal{B}) \subset G \times_\Phi X$ is a subrack.
%\end{proof}

%As a consequence, we obtain the following result.

\begin{prop}
   Let $\mathcal{B}: X \rightarrow G$ be a relative averaging operator (with respect to the action $\Phi$).

   (i) Then $(X, \diamond_\mathcal{B})$ is a rack (called the descendent rack), where
   \begin{align}\label{des-rack}
       x \diamond_\mathcal{B} y := \Phi_{\mathcal{B} (x)} y, \text{ for } x, y \in X.
   \end{align}

   (ii) The map $\mathcal{B}: X \rightarrow G$ is a morphism of racks from the descendent rack to the rack structure on $G$ induced from the group structure.
\end{prop}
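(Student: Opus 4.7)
The plan is to mirror the proof of Proposition \ref{prop-des} in the relative setting via the same two-step strategy. For part (i), I would first appeal to Proposition \ref{graph-prop} to conclude that the graph $Gr(\mathcal{B})$ is a subrack of $G \times_\Phi X$. Since $\iota: X \to Gr(\mathcal{B})$, $x \mapsto (\mathcal{B}(x), x)$, is a bijection, the subrack structure on $Gr(\mathcal{B})$ transports to a rack structure on $X$. A direct expansion yields $(\mathcal{B}(x), x) \diamond (\mathcal{B}(y), y) = (\mathcal{B}(x)\mathcal{B}(y)\mathcal{B}(x)^{-1}, \Phi_{\mathcal{B}(x)} y)$, and invoking the relative averaging identity (\ref{rel-avg}) to rewrite the first coordinate as $\mathcal{B}(\Phi_{\mathcal{B}(x)} y)$ shows that this element equals $\iota(\Phi_{\mathcal{B}(x)} y)$. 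Hence the transported operation on $X$ is precisely $\diamond_\mathcal{B}$ as defined in (\ref{des-rack}), and $(X, \diamond_\mathcal{B})$ is a rack.

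If one prefers a direct verification, one can check the rack axioms by hand. Left-distributivity reduces to the identity $\Phi_{\mathcal{B}(x)\mathcal{B}(y)} z = \Phi_{\mathcal{B}(x \diamond_\mathcal{B} y) \mathcal{B}(x)} z$, which collapses after substituting $\mathcal{B}(x \diamond_\mathcal{B} y) = \mathcal{B}(x)\mathcal{B}(y)\mathcal{B}(x)^{-1}$ from (\ref{rel-avg}) and cancelling $\mathcal{B}(x)^{-1}\mathcal{B}(x)$. Unique solvability of $x \diamond_\mathcal{B} c = y$ is immediate from the bijectivity of the map $\Phi_{\mathcal{B}(x)}: X \to X$, the unique solution being $c = \Phi_{\mathcal{B}(x)^{-1}} y$.

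For part (ii), the homomorphism property of $\mathcal{B}$ is a direct consequence of (\ref{rel-avg}): for all $x, y \in X$, one has $\mathcal{B}(x \diamond_\mathcal{B} y) = \mathcal{B}(\Phi_{\mathcal{B}(x)} y) = \mathcal{B}(x)\mathcal{B}(y)\mathcal{B}(x)^{-1} = \mathcal{B}(x) \diamond \mathcal{B}(y)$, where the last expression invokes the conjugation rack structure on $G$. I do not anticipate any real obstacle: the essential content is already packaged in Proposition \ref{graph-prop} and the defining identity of a relative averaging operator, so the entire argument is a routine adaptation of Proposition \ref{prop-des} with the adjoint action replaced by the general action $\Phi$.
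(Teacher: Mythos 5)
Your proposal is correct and matches the paper's intent: the paper omits this proof entirely, stating only that it is ``similar'' to Propositions \ref{prop-graph-group} and \ref{prop-des}, and your argument (transporting the subrack structure from $Gr(\mathcal{B}) \subset G \times_\Phi X$ along $x \mapsto (\mathcal{B}(x), x)$, plus the one-line computation for part (ii)) is exactly that adaptation. Your supplementary direct verification is a welcome addition, since the paper's notion of subrack only guarantees closure under the operation, so checking left-distributivity and the bijectivity of $\Phi_{\mathcal{B}(x)}$ by hand makes the rack axioms for $(X, \diamond_\mathcal{B})$ fully explicit.
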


One may also consider relative averaging operators with respect to a representation (i.e, with respect to an action $\Phi: G \times V \rightarrow V$ on a vector space $V$ such that the map $\Phi_x: V \rightarrow V$ is linear, for each $x \in G$) similar to Definition \ref{rel-defn}. Note that, any Lie group has an adjoint representation on its Lie algebra. Therefore, in this case, we may define relative averaging operators with respect to the adjoint representation. We show that the differentiation yields an averaging operator on the corresponding Lie algebra.

Let $G$ be a Lie group with the Lie algebra $(\mathfrak{g}, [~,~])$. 
Note that the group $G$ acts on $G$ itself by the adjoint action $\mathrm{Ad} : G \times G \rightarrow G$, $\mathrm{Ad}_g h = ghg^{-1}$, for $g, h \in G$. Since the map $\mathrm{Ad}_g: G \rightarrow G$ takes the identity element to itself, by differentiating we obtain a linear map $(\mathrm{Ad}_g)_*: \mathfrak{g} \rightarrow \mathfrak{g}$. Finally, the adjoint representation of $G$ on its Lie algebra $\mathfrak{g}$ is given by the map (also denoted by the same notation) $\mathrm{Ad}: G \times \mathfrak{g} \rightarrow \mathfrak{g}$, $\mathrm{Ad}_g x:= (\mathrm{Ad}_g)_* x$, for $g \in G$ and $x \in \mathfrak{g}$. The differentiation of this action gives the adjoint action $\mathrm{ad}: \mathfrak{g} \times \mathfrak{g} \rightarrow \mathfrak{g}$ of the Lie algebra $\mathfrak{g}$ on itself. In other words, $\mathrm{ad}_x y = [x, y]= \frac{d^2}{dtds} \big|_{t, s = 0} \mathrm{Ad}_{\mathrm{exp} (tx)} sy,$ for $x, y \in \mathfrak{g}$.

%on its Lie algebra $\mathfrak{g}$ by adjoint action $\mathrm{Ad} : G \times \mathfrak{g} \rightarrow \mathfrak{g}$ given by $\mathrm{Ad}_g X = (\mathrm{Adj}_g)_* X$, where $\mathrm{Adj}_g : G \rightarrow G$ is the map $\mathrm{Adj}_g h = ghg^{-1}$, for $h \in G$. 

\begin{thm}
    Let $G$ be a Lie group with the Lie algebra $\mathfrak{g}$. If $\mathfrak{B} : \mathfrak{g} \rightarrow G$ is a smooth pointed (i.e. $\mathfrak{B} (0) = e$) relative averaging operator on $G$ with respect to the adjoint representation on $\mathfrak{g}$, then $P = \mathfrak{B}_{* 0 } : \mathfrak{g} \rightarrow \mathfrak{g}$ is an averaging operator on the Lie algebra $\mathfrak{g}$.
\end{thm}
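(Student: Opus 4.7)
The plan is to follow closely the template of Theorem \ref{firr-thm}, making the necessary adjustments for the adjoint representation. I would start from the identification
\[
[P(x), P(y)] = \frac{d^2}{dt\,ds}\bigg|_{t,s=0} \mathrm{exp}^{t P(x)}\, \mathrm{exp}^{s P(y)}\, \mathrm{exp}^{-t P(x)}.
\]
Since $\mathfrak{B}$ is pointed ($\mathfrak{B}(0)=e$) with $\mathfrak{B}_{*0} = P$, the curves $t \mapsto \mathrm{exp}^{t P(x)}$ and $t \mapsto \mathfrak{B}(tx)$ agree to first order at $t = 0$; analogously for $s \mapsto \mathrm{exp}^{s P(y)}$ and $s \mapsto \mathfrak{B}(sy)$. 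Because the mixed second derivative of a conjugation expression $\alpha(t)\beta(s)\alpha(t)^{-1}$ at $(0,0)$ equals $[\alpha'(0), \beta'(0)]$ (and so depends only on the $1$-jets of $\alpha$ and $\beta$ at $0$), I may replace the exponentials by $\mathfrak{B}$-curves and obtain
\[
[P(x), P(y)] = \frac{d^2}{dt\,ds}\bigg|_{t,s=0} \mathfrak{B}(tx)\, \mathfrak{B}(sy)\, \mathfrak{B}(tx)^{-1}.
\]

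Next I would invoke the relative averaging identity (\ref{rel-avg}) with arguments $tx$ and $sy$, which rewrites the triple product as $\mathfrak{B}\bigl(\mathrm{Ad}_{\mathfrak{B}(tx)}(sy)\bigr)$. Setting $\phi(t,s) := \mathrm{Ad}_{\mathfrak{B}(tx)}(sy)$ and $h(t,s) := \mathfrak{B}(\phi(t,s))$, the goal becomes to compute $h_{ts}(0,0)$. The key observation is that $\mathrm{Ad}_g$ is \emph{linear} in its vector-space argument, so $\phi(t,0) \equiv 0$; in particular $h(t,0) \equiv e$, and the Hessian-of-$\mathfrak{B}$ contribution in the chain rule drops out.

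Concretely, $h_s(t,0) = d\mathfrak{B}_{\phi(t,0)}\bigl(\phi_s(t,0)\bigr) = P\bigl(\mathrm{Ad}_{\mathfrak{B}(tx)} y\bigr)$ because $\phi(t,0) = 0$ and $d\mathfrak{B}_0 = P$. Differentiating in $t$ at $t=0$ and using the standard formula $\frac{d}{dt}|_{t=0}\mathrm{Ad}_{\mathfrak{B}(tx)} y = \mathrm{ad}_{P(x)} y = [P(x), y]$, I get $h_{ts}(0,0) = P([P(x), y])$. Combining, $[P(x), P(y)] = P[P(x), y]$, which is the averaging-operator identity on $\mathfrak{g}$.

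The main obstacle, and the one point that deserves care in the write-up, is the justification of the exponential-to-$\mathfrak{B}$ replacement in the first step and the chain-rule computation at the end: both rely on the fact that the relevant maps vanish (or are constant equal to $e$) along one of the coordinate axes, so that second mixed derivatives depend only on the first-order data of $\mathfrak{B}$ at $0$, namely $P$. This vanishing is automatic here thanks to the pointedness of $\mathfrak{B}$ and the linearity of the adjoint representation in the Lie algebra argument, so the argument is parallel to the proof of Theorem \ref{firr-thm} but with $\mathrm{Ad}$ playing the role of the inner conjugation on the group.
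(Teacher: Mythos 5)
Your proposal is correct and follows essentially the same route as the paper's proof: express $[P(x),P(y)]$ as the mixed second derivative of $\mathrm{exp}^{tP(x)}\mathrm{exp}^{sP(y)}\mathrm{exp}^{-tP(x)}$, replace the exponentials by the curves $\mathfrak{B}(tx)$, $\mathfrak{B}(sy)$, apply the relative averaging identity, and pull the differentiation through $\mathfrak{B}$ to land on $P([P(x),y])$. The only difference is that you spell out the two justifications the paper leaves implicit (dependence on $1$-jets only, and the vanishing of the Hessian term of $\mathfrak{B}$ via $\phi(t,0)\equiv 0$), and both of those arguments are sound.
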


\begin{proof}
    For any $x,y \in \mathfrak{g}$, we have
    \begin{align*}
        [P(x), P(y)] =~& \frac{d^2}{dt ds} \big|_{t, s = 0} \mathrm{exp}^{t P (x)} \mathrm{exp}^{s P(y)} \mathrm{exp}^{- t P (x)} \\
        =~&  \frac{d^2}{dt ds} \big|_{t, s = 0}  \mathfrak{B} (t x) \mathfrak{B}(sy) \mathfrak{B}(tx)^{-1} \\
        =~&  \frac{d^2}{dt ds} \big|_{t, s = 0} \mathfrak{B} \big(  \mathrm{Ad}_{ \mathfrak{B} (t x)  } sy \big) \\
        =~& \mathfrak{B}_{* 0 } \big(   \frac{d^2}{dt ds} \big|_{t, s = 0}   \mathrm{Ad}_{ \mathfrak{B} (t x)  } sy   \big) \\
        =~& P ( [P(x), y]).
    \end{align*}
    This shows that $P$ is an averaging operator on the Lie algebra $\mathfrak{g}$.
\end{proof}

Note that a Lie group $G$ has an adjoint representation on its Lie algebra. Hence one may easily define relative averaging operators with respect to the adjoint representation. For arbitrary abstract groups, this approach doesn't make sense. However, we may define relative averaging operators by using the Malcev completion of a group due to Quillen \cite{qui} (see also \cite{li-sheng-tang}).

Let $G$ be a group with the identity element $e$. First, consider the group ring ${\bf k}[G]$. This has a standard (cocommutative) Hopf algebra structure with the coproduct, counit and antipode respectively given by
\begin{align*}
    \Delta (x) = x \otimes x, \quad \varepsilon (x) = 1 ~~~~ \text{ and } ~~~~ S(x) = x^{-1}, \text{ for } x \in G.
\end{align*}
Let $I = \mathrm{ker} (\varepsilon) = \langle x - e | x \in G \rangle$ be the augmentation ideal of ${\bf k}[G]$. We define
\begin{align*}
    \widehat{ {\bf k}[G]   } := \lim_{n \rightarrow \infty} {\bf k}[G]/ I^n.
\end{align*}
Explicitly, we have $ \widehat{ {\bf k}[G]   } = \big\{  \sum_{i=0}^\infty x_i |~ \sum_{i=0}^n x_i \in {\bf k}[G]/ I^n \text{ for each } n \geq 0   \big\}.$ Note that the coproduct $\Delta$ induces a linear map $\widetilde{\Delta} : {\bf k}[G]/ I^n \rightarrow \bigoplus_{i+j = n} {\bf k}[G]/ I^i \otimes {\bf k}[G]/ I^j$. By taking the inverse limit, we obtain a coassociative map $\widehat{\Delta}:  \widehat{ {\bf k}[G]   } \hat{\otimes}  \widehat{ {\bf k}[G]   }$. The completion  $\widehat{ {\bf k}[G]   }$ is a complete Hopf algebra (say, with unit $\mathbbm{1}$) consisting of formal power series of group ring elements. Let $\widehat{G}$ be the set of all group-like elements of $ \widehat{ {\bf k}[G]   }$, and $\widehat{\mathfrak{g}}$ the primitive Lie algebra of $ \widehat{ {\bf k}[G]   }$. (When $G$ is finitely generated, the group $\widehat{G}$ is called the Malcev completion of $G$.) If $\widehat{I}$ is the completion of $I$ in $\widehat{ {\bf k}[G]}$, then we have 
\begin{align*}
    \widehat{G} = \{ f \in \mathbbm{1} + \widehat{I} |~ \widehat{\Delta} (f) = f \hat{\otimes} f   \} ~~~ \text{ and } ~~~~ \widehat{\mathfrak{g}} = \{ f \in \widehat{I} |~ \widehat{\Delta} (f) = \mathbbm{1} \hat{\otimes} f + f \hat{\otimes} \mathbbm{1} \}.
\end{align*}
For each $n$, we set $\widehat{G}_n = \widehat{G} \cap (\mathbbm{1} + \widehat{I}^n)$. Then $\widehat{G}^n := \widehat{G}/ \widehat{G}_{n+1}$ is a unipotent algebraic group over ${\bf k}$ (lying in $\mathbbm{1} + \widehat{I}/ \widehat{I}^{n+1}$) whose Lie algebra $\widehat{\mathfrak{g}}^n$ satisfies $\widehat{g}^n \subset \widehat{I}/ \widehat{I}^{n+1}$. Moreover, we have $\widehat{G} \cong \lim_{n \rightarrow \infty} \widehat{G}^n$ and $\widehat{\mathfrak{g}} \cong \lim_{n \rightarrow \infty} \widehat{\mathfrak{g}}^n$. Each group $\widehat{G}^n$ acts on its Lie algebra $\widehat{g}^n$ by adjoint action, which induces an action of $\widehat{G}$ on $\widehat{\mathfrak{g}}$. This gives rise to an action of $G$ on $\widehat{\mathfrak{g}}$. We call this the adjoint action of any abstract group $G$. Hence one may define a relative averaging operator (with respect to the adjoint action) as a map $\mathfrak{B}: \widehat{\mathfrak{g}} \rightarrow G$ that satisfies 
\begin{align*}
    \mathfrak{B} (f) \mathfrak{B}(g) \mathfrak{B} (f)^{-1} = \mathfrak{B} ( \mathfrak{B} (f) g \mathfrak{B} (f)^{-1}   ), \text{ for } f, g \in \widehat{\mathfrak{g}}. 
\end{align*}

%\begin{proof}
 %   (i) Note that the set $Q$ has a bijection with the set $Gr (\mathfrak{B})$ via $x \leftrightarrow (\mathfrak{B} (x) , x).$ Since $\mathfrak{B}$ is a relative averaging operator, it follows that $Gr (\mathfrak{B})$ has a rack structure (cf. Proposition \ref{graph-prop}). This structure induces a rack structure on $Q$ with the binary operation $\diamond_\mathfrak{B}$ given by (\ref{des-rack}).

  %  (ii) For any $x, y \in Q$, we have
   % \begin{align*}
    %    \mathfrak{B} (x \diamond_\mathfrak{B} y) = \mathfrak{B} (\Phi_{\mathfrak{B} (x)}) =  \mathfrak{B} (x)  \mathfrak{B} (y) \mathfrak{B} (x)^{-1} = \mathfrak{B} (x) \diamond_G \mathfrak{B} (y).
    %\end{align*}
    %Here $\diamond_G$ denotes the rack operation on $G$ induced from the group multiplication. Hence the proof.
%\end{proof}

%\section{Averaging operators on cocommutative Hopf algebras}

%\section{Averaging operators on Lie groupoids and Lie algebroids}

\section{Averaging operators on racks}\label{sec3}

In this section, we consider averaging operators on a rack. We observe that an averaging operator on a rack induces a hierarchy of new rack structures. 

\begin{defn}
(i) Let $(Q, \diamond)$ be a rack. A map $\mathcal{A} : Q \rightarrow Q$ is said to be an {\bf averaging operator} on $Q$ if it satisfies
\begin{align}
   \mathcal{A} (x) \diamond \mathcal{A}(y) = \mathcal{A} (  \mathcal{A} (x) \diamond y), \text{ for } x, y \in Q.
\end{align}
If $(Q, \diamond)$ is a pointed rack with distinguished element $e$, then an averaging operator $\mathcal{A}$ is called pointed if $\mathcal{A}(e) =e.$

(ii) An {\bf averaging rack} is a rack $Q$ equipped with an averaging operator $\mathcal{A} : Q \rightarrow Q$. An averaging rack is denoted by the pair $(Q, \mathcal{A})$.
\end{defn}

\begin{remark}
    Let $G$ be a group and consider the conjugation rack $(G, \diamond)$. Then a map $\mathcal{A}: G \rightarrow G$ is an averaging operator on the group $G$ if and only if it is an averaging operator on the rack $(G, \diamond)$.
\end{remark}

\begin{exam}
    Let $(Q, \diamond)$ be a rack. Then the identity map $\mathrm{Id}: Q \rightarrow Q$ is an averaging operator.
\end{exam}

\begin{exam}
    Let $(Q, \diamond)$ be a rack. Define $Z (Q) = \{ x \in Q | x \diamond y = y, \text{ for all } y \in Q \}$. Then any map $\mathcal{A} : Q \rightarrow Z (Q) \subset Q$ is an averaging operator on $Q$. A rack $(Q, \diamond)$ is said to be abelian if $Z (Q) = Q$. In this case, any map $\mathcal{A} : Q \rightarrow Q$ is an averaging operator on $Q$.
\end{exam}

\begin{exam}
    Let $A$ be an abelian group. Then $A$ carries a rack structure with the operation $a \diamond b := 2a - b$, for $a, b \in A$. (This is a quandle structure, called the {\em Takasaki quandle}). Let $\varphi: A \rightarrow A$ be an idempotent group homomorphism. Then for any $a, b \in A$,
    \begin{align*}
        \varphi ( \varphi (a) \diamond b) = \varphi (2 \varphi (a) - b) = 2 \varphi^2 (a) - \varphi (b) = 2 \varphi (a) - \varphi (b) = \varphi (a) \diamond \varphi (b).
    \end{align*}
    This shows that $\varphi$ is an averaging operator on the rack $(A, \diamond)$.
\end{exam}

The above example has the following generalization.

\begin{exam}
    Let $A$ be an abelian group and $t \in \mathrm{Aut} (A)$ be an automorphism. Then $A$ carries a rack structure with the operation $a \diamond b := (\mathrm{Id} - t) (a) + t (b)$, for $a, b \in A$. This is also a quandle structure, called the {\em Alexander quandle}. Let $\varphi: A \rightarrow A$ be an idempotent group homomorphism that commutes with $t$, then $\varphi$ is an averaging operator on the rack $(A, \diamond)$.
\end{exam}

\begin{exam}
    Let $X = \{ a_1, \ldots, a_n \}$ be a set with $n \geq 2$ many elements. Then one can define a rack structure on $X$ with the operation $a_i \diamond a_j := a_{n-j+1}$, for all $1 \leq i, j \leq n$. Let $\varphi : \{ 1, \ldots, n \} \rightarrow \{ 1, \ldots, n \}$ be a map that satisfies $\varphi (i) + \varphi (n-i+1) = n+1$, for all $i$, then the map $\mathcal{A} : X \rightarrow X$ defined by $\mathcal{A} (a_i)= a_{\varphi(i)}$ is an averaging operator on the rack $(X, \diamond)$. In fact, any averaging operator on $X$ corresponds to such a map $\varphi.$ In particular, if $n$ is odd, then the constant map $\mathcal{A} (a_i)= a_{\frac{n+1}{2}}$, for all $1\leq i \leq n$, is an averaging operator on $(X, \diamond)$.
\end{exam}

The following result is a generalization of Proposition \ref{prop-des} in the context of racks.

\begin{prop}
    Let $(Q, \diamond)$ be a rack and $\mathcal{A}: Q \rightarrow Q$ be an averaging operator on $Q$. Then $(Q, \diamond_\mathcal{A})$ is a rack (called the descendent rack), where
    \begin{align*}
        x \diamond_\mathcal{A} y := \mathcal{A} (x) \diamond y, \text{ for } x, y \in Q.
    \end{align*}
    Then $\mathcal{A} : Q \rightarrow Q$ is a morphism of racks from the descendent rack $(Q, \diamond_\mathcal{A})$ to the rack $(Q, \diamond)$. Moreover, $\mathcal{A}: Q \rightarrow Q$ is an averaging operator on the descendent rack $(Q, \diamond_\mathcal{A})$.
\end{prop}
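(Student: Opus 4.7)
The plan is to verify the three assertions in order, each one reducing to the averaging identity $\mathcal{A}(x) \diamond \mathcal{A}(y) = \mathcal{A}(\mathcal{A}(x) \diamond y)$ combined with the rack axioms for $(Q,\diamond)$.

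For the first assertion, I would verify left-distributivity by expanding both sides. The left-hand side $x \diamond_\mathcal{A} (y \diamond_\mathcal{A} z)$ unfolds to $\mathcal{A}(x) \diamond (\mathcal{A}(y) \diamond z)$, while the right-hand side $(x \diamond_\mathcal{A} y) \diamond_\mathcal{A} (x \diamond_\mathcal{A} z)$ unfolds to $\mathcal{A}(\mathcal{A}(x) \diamond y) \diamond (\mathcal{A}(x) \diamond z)$. Applying the averaging identity to rewrite $\mathcal{A}(\mathcal{A}(x) \diamond y)$ as $\mathcal{A}(x) \diamond \mathcal{A}(y)$ turns the right-hand side into $(\mathcal{A}(x) \diamond \mathcal{A}(y)) \diamond (\mathcal{A}(x) \diamond z)$, which coincides with the left-hand side by left-distributivity of $\diamond$. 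For the second rack axiom, the equation $x \diamond_\mathcal{A} c = y$ is literally $\mathcal{A}(x) \diamond c = y$, and therefore admits a unique solution because $(Q,\diamond)$ is a rack.

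For the second assertion, that $\mathcal{A}$ is a rack morphism from $(Q,\diamond_\mathcal{A})$ to $(Q,\diamond)$, I would compute directly
\[
\mathcal{A}(x \diamond_\mathcal{A} y) \;=\; \mathcal{A}(\mathcal{A}(x) \diamond y) \;=\; \mathcal{A}(x) \diamond \mathcal{A}(y),
\]
the last step being the averaging identity. For the third assertion, I would show $\mathcal{A}(x) \diamond_\mathcal{A} \mathcal{A}(y) = \mathcal{A}(\mathcal{A}(x) \diamond_\mathcal{A} y)$ by expanding both sides: the left-hand side equals $\mathcal{A}^2(x) \diamond \mathcal{A}(y)$, which by the averaging identity (applied to the pair $(\mathcal{A}(x), y)$) equals $\mathcal{A}(\mathcal{A}^2(x) \diamond y)$; the right-hand side unfolds directly to $\mathcal{A}(\mathcal{A}^2(x) \diamond y)$.

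I do not expect any substantial obstacle, since the whole argument is a short bookkeeping exercise that relies on a single application of the averaging identity in each step. The only point requiring a little care is the third assertion, where expanding the inner $\diamond_\mathcal{A}$ introduces an $\mathcal{A}^2$ term, and one must feed this term into the correct slot of the averaging identity to match the two sides.
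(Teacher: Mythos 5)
Your proposal is correct and follows essentially the same route as the paper's proof: the same single application of the averaging identity establishes left-distributivity (you just run the chain of equalities in the opposite direction), the bijectivity of $L_x^{\diamond_\mathcal{A}} = L_{\mathcal{A}(x)}^{\diamond}$ handles the second rack axiom, and the morphism and final averaging-operator claims are verified by the same one-line computations. No gaps.
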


\begin{proof}
First observe that for any $x \in Q$, the left translation map $x \diamond_\mathcal{A} -$ is simply $\mathcal{A}(x) \diamond -$ which is bijective on $Q$. Next, for any $x, y, z \in Q$, we observe that
\begin{align*}
    x \diamond_\mathcal{A} (y \diamond_\mathcal{A} z) =~& \mathcal{A} (x) \diamond (\mathcal{A}(y) \diamond z) \\
    =~& (\mathcal{A}(x) \diamond \mathcal{A}(y)) \diamond (\mathcal{A}(x) \diamond z) \\
    =~& \mathcal{A} ( \mathcal{A}(x) \diamond y) \diamond (x \diamond_\mathcal{A} z) \\
    =~& (x \diamond_\mathcal{A} y) \diamond_\mathcal{A} ( x \diamond_\mathcal{A} z ).
\end{align*}
This proves that the operation $\diamond_\mathcal{A}$ is left-distributive. Hence $(Q, \diamond_\mathcal{A})$ is a rack.

Next, for any $x, y \in Q$, we have $\mathcal{A} (x \diamond_\mathcal{A} y ) = \mathcal{A} ( \mathcal{A}(x) \diamond y) = \mathcal{A}(x) \diamond \mathcal{A}(y)$ which imples that $\mathcal{A}: Q \rightarrow Q$ is a morphism of racks from the descendent rack $(Q, \diamond_\mathcal{A})$ to the rack $(Q, \diamond)$. Finally, we also have
\begin{align*}
    \mathcal{A} ( \mathcal{A} (x) \diamond_\mathcal{A} y) = \mathcal{A} ( \mathcal{A}^2 (x) \diamond y) = \mathcal{A}^2 (x) \diamond \mathcal{A} (y) = \mathcal{A} (x) \diamond_\mathcal{A} \mathcal{A} (y),
\end{align*}
for any $x, y \in Q$. This shows that $\mathcal{A}: Q \rightarrow Q$ is an averaging operator on the descendent rack $(Q, \diamond_\mathcal{A})$.
\end{proof}

In the next result, we show that an averaging operator on a rack induces a hierarchy of new rack structures.

\begin{prop}\label{avg-power-rack}
    Let $(Q, \diamond)$ be a rack and $\mathcal{A}: Q \rightarrow Q$ be an averaging operator on $Q$.

    (i) Then for each $k \geq 0$, the map $\mathcal{A}^k : Q \rightarrow Q$ is an averaging operator on $Q$.

    (ii) For any $k, l \geq 0$, the map $\mathcal{A}^k : Q \rightarrow Q$ is an averaging operator on the descendent rack $(Q, \diamond_{\mathcal{A}^l})$.

    (iii) For any $k, l \geq 0$, the descendent racks $(Q, \diamond_{\mathcal{A}^{k+l}})$ and $(Q, (\diamond_{\mathcal{A}^l})_{\mathcal{A}^k})$ are the same.
\end{prop}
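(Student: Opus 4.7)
The plan is to prove the three parts in the order (i), (iii), (ii), since (ii) will follow quickly from (i) and a direct identity used to prove (iii).

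For part (i), I would proceed by induction on $k$. The base case $k=0$ is trivial (the identity is an averaging operator on any rack) and $k=1$ is the hypothesis. For the inductive step, suppose $\mathcal{A}^r$ is an averaging operator on $(Q, \diamond)$. Then for any $x, y \in Q$, I would compute
\begin{align*}
\mathcal{A}^{r+1}(x) \diamond \mathcal{A}^{r+1}(y)
&= \mathcal{A}^r(\mathcal{A}(x)) \diamond \mathcal{A}^r(\mathcal{A}(y)) \\
&= \mathcal{A}^r\bigl(\mathcal{A}^r(\mathcal{A}(x)) \diamond \mathcal{A}(y)\bigr) \\
&= \mathcal{A}^r\bigl(\mathcal{A}(\mathcal{A}^r(x)) \diamond \mathcal{A}(y)\bigr) \\
&= \mathcal{A}^r\bigl(\mathcal{A}(\mathcal{A}(\mathcal{A}^r(x)) \diamond y)\bigr)
= \mathcal{A}^{r+1}\bigl(\mathcal{A}^{r+1}(x) \diamond y\bigr),
\end{align*}
where the second equality uses that $\mathcal{A}^r$ is averaging, and the fourth uses that $\mathcal{A}$ itself is averaging on $(Q, \diamond)$. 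This closes the induction.

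For part (iii), the proof is a straightforward unwinding of the definitions. Given $x, y \in Q$, I would just compute
\begin{align*}
x \, (\diamond_{\mathcal{A}^l})_{\mathcal{A}^k} \, y
= \mathcal{A}^k(x) \diamond_{\mathcal{A}^l} y
= \mathcal{A}^l(\mathcal{A}^k(x)) \diamond y
= \mathcal{A}^{k+l}(x) \diamond y
= x \diamond_{\mathcal{A}^{k+l}} y,
\end{align*}
so the two rack operations on $Q$ agree.

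For part (ii), I would combine parts (i) and the identity used in (iii). To show that $\mathcal{A}^k$ is an averaging operator on $(Q, \diamond_{\mathcal{A}^l})$, I need the equality $\mathcal{A}^k(x) \diamond_{\mathcal{A}^l} \mathcal{A}^k(y) = \mathcal{A}^k(\mathcal{A}^k(x) \diamond_{\mathcal{A}^l} y)$. Unwinding the operation $\diamond_{\mathcal{A}^l}$, both sides reduce to an assertion about $\diamond$:
\begin{align*}
\mathcal{A}^{k+l}(x) \diamond \mathcal{A}^k(y) = \mathcal{A}^k\bigl(\mathcal{A}^{k+l}(x) \diamond y\bigr).
\end{align*}
Setting $u := \mathcal{A}^l(x)$, this becomes $\mathcal{A}^k(u) \diamond \mathcal{A}^k(y) = \mathcal{A}^k(\mathcal{A}^k(u) \diamond y)$, which is precisely the averaging condition for $\mathcal{A}^k$ on $(Q, \diamond)$ established in part (i).

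There is no serious obstacle in this proposition: the only step requiring thought is the inductive computation in (i), which hinges on applying the averaging identity twice — once to pull $\mathcal{A}^r$ outside, and once more to commute the innermost $\mathcal{A}$ through the $\diamond$ — and the rest is book-keeping with definitions.
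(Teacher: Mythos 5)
Your proposal is correct. Parts (i) and (iii) coincide with the paper's argument: the same two-step application of the averaging identity in the inductive step of (i), and the same one-line unwinding of definitions for (iii). Part (ii) is where you genuinely diverge. The paper runs a \emph{second} induction: it first verifies directly that $\mathcal{A}$ itself is an averaging operator on $(Q, \diamond_{\mathcal{A}^l})$, and then inducts on the power $r$ within that descendent rack. You instead unwind $\diamond_{\mathcal{A}^l}$ on both sides of the required identity, reducing it to $\mathcal{A}^{k+l}(x) \diamond \mathcal{A}^k(y) = \mathcal{A}^k\bigl(\mathcal{A}^{k+l}(x) \diamond y\bigr)$, and observe that the substitution $u = \mathcal{A}^l(x)$ turns this into exactly the averaging condition for $\mathcal{A}^k$ on the original rack, already proved in (i). Your route is shorter and makes transparent that (ii) is not new information but a reparametrization of (i); the paper's route is more mechanical but self-contained within the descendent rack and mirrors the proof pattern of (i). Both are valid; nothing is missing from your argument.
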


\begin{proof}
    (i) Suppose $\mathcal{A}^r : Q \rightarrow Q$ is an averaging operator, for some $r \in \mathbb{N}$. Then for any $x, y \in Q$, we have
    \begin{align*}
        \mathcal{A}^{r+1} (x) \diamond \mathcal{A}^{r+1} (y) =~& \mathcal{A}^r \big(  \mathcal{A}^{r+1} (x) \diamond \mathcal{A}(y)  \big) \quad (\because ~ \mathcal{A}^r \text{ is an averaging operator}) \\
        =~& \mathcal{A}^{r+1} \big( \mathcal{A}^{r+1} (x) \diamond y  \big) \quad (\because ~ \mathcal{A} \text{ is an averaging operator}).
    \end{align*}
    This shows that $\mathcal{A}^{r+1}$ is an averaging operator on $Q$. Hence by the mathematical induction, $\mathcal{A}^k$ is an averaging operator on $Q$, for any $k \geq 0$. 

    (ii) First observe that
    \begin{align*}
        \mathcal{A} (x) \diamond_{\mathcal{A}^l} \mathcal{A}(y) = \mathcal{A}^{l+1} (x) \diamond \mathcal{A} (y) = \mathcal{A} ( \mathcal{A}^{l+1} (x) \diamond y ) = \mathcal{A} ( \mathcal{A}(x) \diamond_{\mathcal{A}^l} y),
    \end{align*}
    for any $x, y \in Q$. This proves that $\mathcal{A} : Q \rightarrow Q$ is an averaging operator on the descendent rack $(Q, \diamond_{\mathcal{A}^l})$. Assume that $\mathcal{A}^r$ (for some $r \geq 0$) is an averaging operator on the rack $(Q, \diamond_{\mathcal{A}^l})$. Then for any $x, y \in Q$, we have
    \begin{align*}
        \mathcal{A}^{r+1} (x) \diamond_{\mathcal{A}^l} \mathcal{A}^{r+1} (y) = \mathcal{A}^r (\mathcal{A} (x) ) \diamond_{\mathcal{A}^l} \mathcal{A}^r  (\mathcal{A} (y)) = \mathcal{A}^r ( \mathcal{A}^{r+1} \diamond_{\mathcal{A}^l} \mathcal{A} (y)) = \mathcal{A}^{r+1} \big(  \mathcal{A}^{r+1} (x) \diamond_{\mathcal{A}^l} y \big).
    \end{align*}
    Hence $\mathcal{A}^{r+1}$ is an averaging operator on $(Q, \diamond_{\mathcal{A}^l})$. Therefore, the result is followed by the mathematical induction.

    (iii) For any $x, y \in Q$, we have $x (\diamond_{\mathcal{A}^l})_{\mathcal{A}^k} y = \mathcal{A}^k (x) \diamond_{\mathcal{A}^l} y = \mathcal{A}^{k+l} (x) \diamond y = x \diamond_{\mathcal{A}^{k+l}} y$ which shows that the racks $(Q, \diamond_{\mathcal{A}^{k+l}})$ and $(Q, (\diamond_{\mathcal{A}^l})_{\mathcal{A}^k})$ are the same.
\end{proof}

%\begin{prop}
%    Let $\mathcal{A}: G \rightarrow G$ be an averaging operator on a group $G$. Then $\mathcal{A}$ is an averaging operator on the descendent rack $(G, \ast).$
%\end{prop}

%\begin{proof}
%    For any $g, h \in G$, 
%    \begin{align*}
%        \mathcal{A} (g) \ast \mathcal{A}(h) = \mathcal{A}^2 (g) \mathcal{A} (h) ( \mathcal{A}^2(g))^{-1} = \mathcal{A} \big(  \mathcal{A}^2(g) h (\mathcal{A}^2 (g) )^{-1} \big) = \mathcal{A} ( \mathcal{A} (g) \ast h).
%    \end{align*}
%    Hence the result follows.
%\end{proof}

\begin{remark}
    Let $\mathcal{A}: G \rightarrow G$ be an averaging operator on a group $G$. Then for any $k \geq 0$, the pair $(G, \diamond_{\mathcal{A}^k})$ is a rack, where $g \diamond_{\mathcal{A}^k} h := \mathcal{A}^k (g) h (\mathcal{A}^k (g))^{-1}$, for $g, h \in G.$
\end{remark}

%Till now, we mainly considered averaging operators on groups and racks, and also studied averaging operators on Lie algebras. These algebraic structures (namely, groups, racks and Lie algebras) are closely related to the solutions of the Yang-Baxter equation

\section{Rack-pairings and di-racks}\label{sec4}
In this section, we first introduce the notion of a rack-pairing. We show that an averaging operator on a rack yields a rack-pairing. Any two-sided skew brace also gives rise to a rack-pairing by conjugation. We give some characterizations of rack-pairings. In the end, we introduce and study a closely related structure, called di-racks.

\begin{defn}
    A {\bf (left) rack-pairing} is a triple $(Q, \diamond, \smallblackdiamond)$ that consists of a set $Q$ with two binary operations $\diamond$ and $\smallblackdiamond$ that make both $(Q, \diamond)$ and $(Q, \smallblackdiamond)$ into (left) racks satisfying additionally
    \begin{align}\label{rack-pair-comp}
        x \smallblackdiamond (y \diamond z) = (x \smallblackdiamond y) \diamond (x \smallblackdiamond z), \text{ for } x, y, z \in Q.
        \end{align}
\end{defn}

The condition (\ref{rack-pair-comp}) simply means that for all $x \in Q$, the left translation $L_x^\smallblackdiamond := x \smallblackdiamond -$ is a rack automorphism on $(Q, \diamond)$. 

Let $(Q, \diamond, \smallblackdiamond)$ and $(Q', \diamond', \smallblackdiamond')$ be two rack-pairings. A {\bf morphism} from $(Q, \diamond, \smallblackdiamond)$ to $(Q', \diamond', \smallblackdiamond')$ is a set-map $\varphi : Q \rightarrow Q'$ that satisfies $\varphi (x \diamond y) = \varphi (x) \diamond' \varphi (y)$ and $\varphi (x \smallblackdiamond y) = \varphi (x) \smallblackdiamond' \varphi (y)$, for all $x, y \in Q$.

\begin{exam}\label{tr-rack-p}
    Let $(Q, \diamond)$ be a rack. Then $(Q, \diamond, \diamond)$ is a rack-pairing, called the trivial rack-pairing.
\end{exam}

\begin{exam}
    Let $(Q, \diamond)$ be any rack. Then $(Q, \diamond, \smallblackdiamond)$ and $(Q, \smallblackdiamond, \diamond)$ are both rack-pairings, where $\smallblackdiamond$ is the trivial rack $x \smallblackdiamond y = y$.
\end{exam}

\begin{exam}
    Let $(Q, \diamond, \smallblackdiamond)$ and $(Q', \diamond', \smallblackdiamond')$ be two rack-pairings. Then $(Q \times Q', \diamond \times \diamond', \smallblackdiamond \times \smallblackdiamond')$ is also a rack-pairing.
\end{exam}

%A rack-pairing is said to be {\bf trivial} if $ x \diamond y = x \smallblackdiamond y $, for all $x, y \in Q$.

\begin{exam}
    Let $G$ be a group. Then $(G, \diamond, \smallblackdiamond)$ and $(G, \smallblackdiamond, \diamond)$ are both rack-pairings, where
    \begin{align*}
        g \diamond h = ghg^{-1} ~~~~ \text{ and } ~~~~ g \smallblackdiamond h = g^{-1} h g, \text{ for } g, h \in G.
    \end{align*}
\end{exam}

\begin{exam}
    Let $(Q, \diamond)$ and $(Q', \diamond')$ be two racks. Suppose $\alpha : (Q , \diamond) \rightarrow \mathrm{Aut} (Q', \diamond')$ is a homomorphism of racks, where $\mathrm{Aut} (Q', \diamond')$ is endowed with the conjugation rack structure. Then $(Q \times Q' , \square, \blacksquare)$ is a rack-pairing, where for all $(x,x') , (y, y') \in Q \times Q'$,
    \begin{align*}
        (x,x') \square (y, y') =~& (x \diamond y, x' \diamond' y'),\\
        (x, x') ~\blacksquare~ (y, y') =~& (x \diamond y, x' \diamond' \alpha (x) (y')).
    \end{align*}
\end{exam}

\begin{exam}
Let $Q$ be a nonempty set and $(Q', \diamond')$ be any rack. Let $\alpha : Q \rightarrow \mathrm{Aut}(Q, \diamond')$ be a map that satisfies $\alpha (x) \alpha (y) = \alpha (y) \alpha (x)$, for all $x, y \in Q$. Then the product $Q \times Q'$ can be given a rack-pairing structure with the operations
\begin{align*}
    (x, x') \square (y, y') =~& (y, x' \diamond' \alpha (x) (y') ),\\
    (x, x') ~\blacksquare ~(y, y') =~& (y, x' \diamond' y').
\end{align*}
\end{exam}

\begin{prop}\label{gpr-skr}
    Let $(G, \cdot, \smallblackdiamond)$ be a pointed group-rack. Then the triple $(G, \diamond, \smallblackdiamond)$ is a rack-pairing, where $x \diamond y = x y x^{-1}, \text{ for } x, y \in G.$
\end{prop}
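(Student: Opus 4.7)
The plan is to unpack the definition of a rack-pairing and verify its three requirements in order, relying on the structural data of a pointed group-rack already established in the excerpt.

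First, I would note that $(G,\diamond)$ with $x\diamond y = xyx^{-1}$ is a rack — this is precisely the conjugation rack structure on any group, recorded earlier in the excerpt as the standard example of a rack arising from a group. And $(G,\smallblackdiamond)$ is a rack by the very definition of a group-rack. So the only content of the proposition is the compatibility condition
\[
x \smallblackdiamond (y \diamond z) = (x \smallblackdiamond y) \diamond (x \smallblackdiamond z), \qquad \text{for all } x,y,z \in G.
\]

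The key ingredients will be (a) the group-rack axiom $x \smallblackdiamond (ab) = (x \smallblackdiamond a)(x \smallblackdiamond b)$, which says each left translation $L_x^\smallblackdiamond$ is a group homomorphism of $(G,\cdot)$, and (b) the identity $(x \smallblackdiamond y)^{-1} = x \smallblackdiamond y^{-1}$, which the remark immediately preceding this proposition derives from pointedness (using $x \smallblackdiamond e = e$). Starting from $y \diamond z = y z y^{-1}$, I would apply (a) twice and then (b) once to compute
\[
x \smallblackdiamond (y z y^{-1}) = (x \smallblackdiamond y)(x \smallblackdiamond z)(x \smallblackdiamond y^{-1}) = (x \smallblackdiamond y)(x \smallblackdiamond z)(x \smallblackdiamond y)^{-1},
\]
and the right-hand side is by definition $(x \smallblackdiamond y) \diamond (x \smallblackdiamond z)$.

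There is essentially no obstacle here; the proposition is a near-tautology once one has isolated the identity $(x \smallblackdiamond y)^{-1} = x \smallblackdiamond y^{-1}$, which is exactly where the \emph{pointed} hypothesis is used. Without pointedness one would not have an obvious way to handle the inverse appearing in the conjugation, so the only subtle point in presenting the proof is to flag clearly why pointedness is needed.
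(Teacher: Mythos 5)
Your proposal is correct and follows essentially the same route as the paper: apply the group-homomorphism property of $L_x^\smallblackdiamond$ to $yzy^{-1}$ and then use $(x \smallblackdiamond y)^{-1} = x \smallblackdiamond y^{-1}$ from the preceding remark to recognize the result as a conjugation. Your explicit flagging of where pointedness enters is a nice touch that the paper leaves implicit.
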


\begin{proof}
    For any $x, y, z \in G$, we have
    \begin{align*}
        x \smallblackdiamond (y \diamond z) =~& x \smallblackdiamond (yz y^{-1}) \\
        =~& (x \smallblackdiamond yz) (x \smallblackdiamond y^{-1}) \\
        =~& (x \smallblackdiamond y) (x \smallblackdiamond z) (x \smallblackdiamond y)^{-1} \\
        =~& (x \smallblackdiamond y) \diamond (x \smallblackdiamond z).
    \end{align*}
    Hence the compatibility condition of a rack-pairing follows.
\end{proof}

\begin{prop}\label{avg-rack-pair}
    Let $(Q, \diamond)$ be a rack and $\mathcal{A}: Q \rightarrow Q$ be an averaging operator on $Q$. Then $(Q, \diamond , \diamond_\mathcal{A})$ is a rack-pairing, where $\diamond_\mathcal{A}$ is the descendent rack structure induced by $\mathcal{A}$. More generally, for any $k, l \geq 0$, the triple $(Q, \diamond_{\mathcal{A}^k}, \diamond_{\mathcal{A}^{k+l}})$ is a rack-pairing.
\end{prop}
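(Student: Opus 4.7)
The plan is to verify the single missing ingredient. By Proposition \ref{prop-des}'s rack analogue (the descendent rack proposition just proved for racks) we already know $(Q,\diamond)$ and $(Q,\diamond_\mathcal{A})$ are both racks, so the only thing left is the compatibility axiom \eqref{rack-pair-comp}: for all $x,y,z\in Q$,
\[
x\diamond_\mathcal{A}(y\diamond z)=(x\diamond_\mathcal{A} y)\diamond(x\diamond_\mathcal{A} z).
\]
Unwinding the definition $x\diamond_\mathcal{A} u=\mathcal{A}(x)\diamond u$, the left-hand side becomes $\mathcal{A}(x)\diamond(y\diamond z)$, while the right-hand side becomes $(\mathcal{A}(x)\diamond y)\diamond(\mathcal{A}(x)\diamond z)$. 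These agree by the left-distributivity of the original rack operation $\diamond$. This is essentially a one-line verification and I expect no obstacle: the identity is built directly out of the left-distributivity axiom of $(Q,\diamond)$ and does not even use that $\mathcal{A}$ is an averaging operator. (The averaging hypothesis was needed only to know that $\diamond_\mathcal{A}$ is a rack in the first place.)

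For the more general claim about $(Q,\diamond_{\mathcal{A}^k},\diamond_{\mathcal{A}^{k+l}})$, I would reduce to the first part rather than repeat the computation. By Proposition \ref{avg-power-rack}(i), $\mathcal{A}^k$ is an averaging operator on $(Q,\diamond)$, so $(Q,\diamond_{\mathcal{A}^k})$ is a rack. By Proposition \ref{avg-power-rack}(ii), $\mathcal{A}^l$ is an averaging operator on the rack $(Q,\diamond_{\mathcal{A}^k})$. Applying the already-established first part of the present proposition to this rack and this averaging operator shows that
\[
\bigl(Q,\ \diamond_{\mathcal{A}^k},\ (\diamond_{\mathcal{A}^k})_{\mathcal{A}^l}\bigr)
\]
is a rack-pairing. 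Finally, Proposition \ref{avg-power-rack}(iii) identifies $(\diamond_{\mathcal{A}^k})_{\mathcal{A}^l}$ with $\diamond_{\mathcal{A}^{k+l}}$, yielding the desired rack-pairing $(Q,\diamond_{\mathcal{A}^k},\diamond_{\mathcal{A}^{k+l}})$. Since the whole argument is built from prior results, no induction or new computation beyond the one-line distributivity check is required.
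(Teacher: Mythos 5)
Your proof is correct. The first part coincides with the paper's: the compatibility $x\diamond_\mathcal{A}(y\diamond z)=(x\diamond_\mathcal{A} y)\diamond(x\diamond_\mathcal{A} z)$ unwinds to the left-distributivity of $\diamond$ applied with first argument $\mathcal{A}(x)$, and your observation that the averaging hypothesis enters only through the fact that $\diamond_\mathcal{A}$ is a rack is accurate. For the general statement about $(Q,\diamond_{\mathcal{A}^k},\diamond_{\mathcal{A}^{k+l}})$ you diverge from the paper: the paper redoes the computation directly, expanding $x\diamond_{\mathcal{A}^{k+l}}(y\diamond_{\mathcal{A}^k}z)$ down to the base operation $\diamond$ and invoking the averaging identity for $\mathcal{A}^k$ to rewrite $\mathcal{A}^{k+l}(x)\diamond\mathcal{A}^k(y)$ as $\mathcal{A}^k(\mathcal{A}^{k+l}(x)\diamond y)$, whereas you bootstrap from the already-proved first part applied to the rack $(Q,\diamond_{\mathcal{A}^k})$ with the averaging operator $\mathcal{A}^l$ (Proposition \ref{avg-power-rack}(ii)), and then identify $(\diamond_{\mathcal{A}^k})_{\mathcal{A}^l}$ with $\diamond_{\mathcal{A}^{k+l}}$ via Proposition \ref{avg-power-rack}(iii). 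Both are valid and there is no circularity in your reduction, since the first part is established for an arbitrary rack and arbitrary averaging operator before being reused; your route avoids repeating the four-line calculation at the cost of leaning on two extra parts of Proposition \ref{avg-power-rack}, while the paper's version is self-contained and makes visible exactly where the averaging identity for $\mathcal{A}^k$ is used.
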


\begin{proof}
    We only need to verify the compatibility condition (\ref{rack-pair-comp}) of rack-pairing. For any $x, y, z \in Q$, we have
    \begin{align*}
        x \diamond_\mathcal{A} (y \diamond z) = \mathcal{A}(x) \diamond (y \diamond z) = (\mathcal{A}(x) \diamond y) \diamond (\mathcal{A} (x) \diamond z) = (x \diamond_\mathcal{A} y ) \diamond (x \diamond_\mathcal{A} z).
    \end{align*}
    This proves the first part. In Proposition \ref{avg-power-rack}, we have seen that $\mathcal{A}^k$ and $\mathcal{A}^{k+l}$ are both averaging operators on the rack $Q$. Hence $(Q, \diamond_{\mathcal{A}^{k}})$ and $(Q, \diamond_{\mathcal{A}^{k+l}})$ are the descendent racks. We also have
    \begin{align*}
        x \diamond_{\mathcal{A}^{k+l}} (y \diamond_{\mathcal{A}^k} z) =~& \mathcal{A}^{k+l} (x) \diamond (\mathcal{A}^{k} (y) \diamond z) \\
        =~& (\mathcal{A}^{k+l}(x) \diamond \mathcal{A}^{k} (y)) \diamond (\mathcal{A}^{k+l} (x) \diamond z) \\
        =~& \mathcal{A}^{k} ( \mathcal{A}^{k+l} (x) \diamond y) \diamond (x \diamond_{\mathcal{A}^{k+l}} z) \\
        =~& (x \diamond_{\mathcal{A}^{k+l}} y) \diamond_{\mathcal{A}^{k}} (x \diamond_{\mathcal{A}^{k+l}} z).
    \end{align*}
    This proves the compatibility condition of the rack-pairing $(Q,  \diamond_{\mathcal{A}^{k}},  \diamond_{\mathcal{A}^{k+l}} ).$
\end{proof}

Let $(Q, \diamond)$ be a rack. A {\bf module} over $(Q, \diamond)$ is a set $X$ with a map $\Phi : Q \times X \rightarrow X,(x, u) \mapsto \Phi_x u$ such that $\Phi_x : X \rightarrow X$ is bijective for all $x \in Q$ and satisfies $\Phi_x \Phi_y = \Phi_{x \diamond y} \Phi_x$, for all $x, y \in Q$. The following remark can be regarded as a generalization of Proposition \ref{avg-rack-pair}.

%Given a module $(X, \Phi)$, a set-map $\pi : X \rightarrow Q$ is said to be {\bf equivariant} if it satisfies
%\begin{align*}
%    \pi (\Phi_x a) = x \diamond \pi (a), \text{ for } x \in Q, a \in X.
%\end{align*}

\begin{remark}
     Let $(Q, \diamond_Q)$ and $ (R, \diamond_R)$ be two racks. Suppose the rack $(Q, \diamond_Q)$ acts on $R$ by a map $\Phi: Q \times R \rightarrow R$ such that for each $x \in Q$, the map $\Phi_x: R \rightarrow R$ is a rack automorphism.  In this case, we say that the rack $(Q, \diamond_Q)$ acts on the rack $(R, \diamond_R)$ by rack automorphisms. Let $\mathcal{A}: R \rightarrow Q$ be a map that satisfies $\mathcal{A} (u) \diamond_Q \mathcal{A} (v) = \mathcal{A} ( \Phi_{\mathcal{A}(u) }v)$, for all $u, v \in R$. Then $\mathcal{A}$ is called a {\bf relative averaging operator} with respect to the action $\Phi$. Then the binary operation $\diamond_\mathcal{A}: R \times R \rightarrow R$ defined by $u \diamond_\mathcal{A} v:= \Phi_{\mathcal{A}(u) }v$ is a new rack structure on $R$. Moreover, the triple $(R, \diamond_R, \diamond_\mathcal{A})$ is a rack-pairing. 
     %This rack-pairing is $\lambda$-homomorphic if and only if $\Phi_{\mathcal{A} (x \diamond_R y)} = \Phi_{\mathcal{A} (x) \diamond_Q \mathcal{A} (y)}$, for all $x, y \in R$. 

     Conversely, any rack-pairing is obtained in this way. To see this, let $(Q, \diamond, \smallblackdiamond)$ be any rack-pairing. Then the rack $(Q, \smallblackdiamond)$ acts on the rack $(Q,\diamond)$ by rack automorphisms via the map $\Phi: Q \times Q \rightarrow Q,~ \Phi_x y = x \smallblackdiamond y$. 
 With this notation, the identity map $\mathrm{id}: Q \rightarrow Q$ is a relative averaging operator with respect to $\Phi$. Note that the induced rack-pairing structure is given by $(Q, \diamond, \diamond_\mathrm{Id} = \smallblackdiamond)$ which is the given one.

\end{remark}

The above remark doesn't say that any rack-pairing can be obtained from an averaging rack. However, the next result shows that under a suitable assumption, any rack-pairing comes from an averaging rack.

A (left) rack $(Q, \diamond)$ is said to be {\bf complete} if for any $x, y \in Q$, the equation $c \diamond x = y$ has a unique solution for $c$. This definition is motivated by the completeness of a group. Recall that a group $G$ is complete if $Z(G) = e$ and any automorphism of $G$ is obtained by conjugation. If $G$ is a complete group then the corresponding conjugation rack $(G, \diamond)$ is complete.

\begin{prop}
    Let $(Q, \diamond, \smallblackdiamond)$ be a rack-pairing such that $(Q, \diamond)$ is a complete rack. Then there exists an averaging operator $\mathcal{A} : Q \rightarrow Q$ on the rack $(Q, \diamond)$ such that $(Q, \diamond, \smallblackdiamond) = (Q, \diamond, \diamond_\mathcal{A})$.
\end{prop}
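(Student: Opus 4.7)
The plan is to construct $\mathcal{A}: Q \to Q$ so that $L^\smallblackdiamond_x = L^\diamond_{\mathcal{A}(x)}$ as set-maps on $Q$, which is exactly the desired identification $x \smallblackdiamond y = \mathcal{A}(x) \diamond y$. First I would fix a reference element $y_0 \in Q$. By completeness of $(Q, \diamond)$, there is a unique element $c = \mathcal{A}(x)$ satisfying $c \diamond y_0 = x \smallblackdiamond y_0$; this defines a set-map $\mathcal{A}: Q \to Q$ such that $\mathcal{A}(x) \diamond y_0 = x \smallblackdiamond y_0$ for every $x$.

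The crucial step is to upgrade this from $y_0$ to an arbitrary $y \in Q$. The rack-pairing compatibility tells us that $L^\smallblackdiamond_x$ is a rack automorphism of $(Q, \diamond)$, and so is $L^\diamond_{\mathcal{A}(x)}$. I would consider the set $S_x = \{y \in Q : \mathcal{A}(x) \diamond y = x \smallblackdiamond y\}$ and show $S_x = Q$. By left-distributivity and the rack-pairing compatibility, $S_x$ is closed under $\diamond$, hence a subrack containing $y_0$. Writing any $y = c \diamond y_0$ via completeness and computing
\begin{align*}
\mathcal{A}(x) \diamond y &= (\mathcal{A}(x) \diamond c) \diamond (\mathcal{A}(x) \diamond y_0) = (\mathcal{A}(x) \diamond c) \diamond (x \smallblackdiamond y_0),\\
x \smallblackdiamond y &= (x \smallblackdiamond c) \diamond (x \smallblackdiamond y_0),
\end{align*}
and invoking the injectivity of the right translation by $x \smallblackdiamond y_0$ (again from completeness), one reduces ``$y \in S_x$'' to ``$c \in S_x$''. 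Thus $S_x$ is invariant under $R^\diamond_{y_0}$ and its inverse; combined with the subrack closure and completeness, this forces $S_x = Q$.

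Finally, I would verify that $\mathcal{A}$ is an averaging operator on $(Q, \diamond)$, i.e.\ $\mathcal{A}(x) \diamond \mathcal{A}(y) = \mathcal{A}(\mathcal{A}(x) \diamond y)$. The identity $\mathcal{A}(u) \diamond v = u \smallblackdiamond v$ rewrites the left-hand side as $x \smallblackdiamond \mathcal{A}(y)$ and the right-hand side as $\mathcal{A}(x \smallblackdiamond y)$. Applying $-\diamond y_0$ to each and expanding via the rack-pairing compatibility and the defining relation of $\mathcal{A}$ yields the same element, and the uniqueness part of completeness then forces $x \smallblackdiamond \mathcal{A}(y) = \mathcal{A}(x \smallblackdiamond y)$.

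The main obstacle is the second step, extending the defining identity from a single point $y_0$ to all of $Q$. The distributivity-based reduction propagates the equality along orbits of the right-translation by $y_0$, and closing the argument so that $S_x$ exhausts $Q$ is where the full strength of the completeness hypothesis on $(Q, \diamond)$ is required.
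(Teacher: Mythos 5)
Your step~1 (pinning down $\mathcal{A}(x)$ as the unique $c$ with $c \diamond y_0 = x \smallblackdiamond y_0$) is legitimate, and your final step reproduces the paper's cancellation argument for the averaging identity, so that part is fine \emph{once} the relation $\mathcal{A}(x)\diamond y = x \smallblackdiamond y$ is known for all $y$. The genuine gap is exactly where you locate the main obstacle: the argument that $S_x = Q$ does not close. Your distributivity computation proves the equivalence $c \diamond y_0 \in S_x \Leftrightarrow c \in S_x$, i.e.\ that $S_x$ is a union of orbits of the bijection $R_{y_0} : c \mapsto c \diamond y_0$; together with $y_0 \in S_x$ and closure under $\diamond$ this only yields the $R_{y_0}^{\pm 1}$-saturation of the subrack generated by $y_0$, which need not be all of $Q$. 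The reduction ``$y \in S_x$ iff $c \in S_x$ where $y = c\diamond y_0$'' never terminates at an element already known to lie in $S_x$. Worse, the step cannot be repaired from the stated hypotheses: take $Q = \mathbb{Z}_3$ with the Takasaki quandle $a \diamond b = 2a - b$ (complete in the paper's sense, since $2$ is invertible) and $x \smallblackdiamond y := y+1$. One checks directly that $(Q,\diamond,\smallblackdiamond)$ is a rack-pairing, yet no map $\mathcal{A}$ with $\mathcal{A}(x)\diamond y = y+1$ for all $y$ can exist (it would force $2\mathcal{A}(x) = 2y+1$ for every $y$); with your construction one finds $S_x = \{y_0\}$, which is indeed a subrack fixed by $R_{y_0}$.

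For comparison, the paper's own proof is even terser at this point: it defines $\mathcal{A}$ ``by the relation $x \smallblackdiamond y = \mathcal{A}(x)\diamond y$'' and asserts well-definedness from completeness, which only guarantees, for each \emph{fixed} $y$, a unique $c$ with $c \diamond y = x \smallblackdiamond y$ --- not that this $c$ is independent of $y$. So you correctly isolated the missing point, and you are more careful than the source; but the resolution you propose fails, and the example above shows the proposition as literally stated is false. What is actually needed is the stronger hypothesis that every automorphism of $(Q,\diamond)$ of the form $L_x^{\smallblackdiamond}$ is a left $\diamond$-translation by a unique element (the rack analogue of $\mathrm{Aut}(G)=\mathrm{Inn}(G)$ with trivial centre in the motivating group case); granting that, your final step, which is the paper's computation, finishes the proof.
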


\begin{proof}
    We define a map $\mathcal{A} : Q \rightarrow Q$ by the relation
    \begin{align*}
        x \smallblackdiamond y = \mathcal{A}(x) \diamond y, \text{ for } x, y \in Q.
    \end{align*}
    The map $\mathcal{A}$ is well-defined as the rack $(Q, \diamond)$ is complete. Next, for any $x, y, z \in Q$, we observe that
    \begin{align}\label{compl1}
        \mathcal{A}(x) \diamond (\mathcal{A}(y) \diamond z) = \mathcal{A}(x) \diamond (y \smallblackdiamond z) = x \smallblackdiamond (y \smallblackdiamond z) 
        = (x \smallblackdiamond y) \smallblackdiamond (x \smallblackdiamond z)
        = \mathcal{A} (\mathcal{A}(x) \diamond y) \diamond (x \smallblackdiamond z).
    \end{align}
    On the other hand,
    \begin{align}\label{compl2}
        (\mathcal{A}(x) \diamond \mathcal{A}(y)) \diamond (\mathcal{A}(x) \diamond z) = (\mathcal{A}(x) \diamond \mathcal{A}(y)) \diamond (x \smallblackdiamond z).
    \end{align}
    Since $\diamond$ satisfies the left-distributive property, it follows from (\ref{compl1}) and (\ref{compl2}) that
    \begin{align*}
        \mathcal{A} (\mathcal{A}(x) \diamond y) \diamond (x \smallblackdiamond z) = (\mathcal{A}(x) \diamond \mathcal{A}(y)) \diamond (x \smallblackdiamond z).
    \end{align*}
    As the rack $(Q, \diamond)$ is complete, we have $\mathcal{A}(x) \diamond \mathcal{A}(y) = \mathcal{A} (\mathcal{A}(x) \diamond y)$, for all $x, y \in Q$. This shows that $\mathcal{A}: Q \rightarrow Q$ is an averaging operator on the rack $(Q, \diamond)$. Moreover, we have $x \diamond_\mathcal{A} y = \mathcal{A}(x) \diamond y = x \smallblackdiamond y$, for all $x, y \in Q$. This completes the last part.
\end{proof}

\medskip

We know that a rack can be obtained from a group using conjugation. In \cite{guar} Guarnieri and Vendramin have introduced the notion of a (left) skew brace which consists of two group operations on a nonempty set satisfying a compatibility. A skew brace is a generalization of the earlier notion of brace in which one group operation is abelian \cite{rump}. See \cite{bardakov,das-rathee} for various connections between Rota-Baxter groups and skew braces. Recently, Trappeniers \cite{trap} have extensively studied two-sided skew braces. In particular, he introduced the notion of a weakly trivial skew brace and showed that every two-sided skew brace is an extension of a weakly trivial skew brace by a two-sided brace. In the following, we will show that a two-sided skew brace gives rise to a rack-pairing by conjugation.

\begin{defn}
    A {\bf (left) skew brace} is a triple $(G, \cdot, \bullet)$ in which $(G, \cdot)$ and $(G, \bullet)$ are both groups satisfying the compatibility
    \begin{align}\label{skew-b-id}
        x \bullet (y z) = (x \bullet y) x^{-1} (x \bullet z), \text{ for } x, y, z \in G.
    \end{align}
    As before, we use the notation $xy$ for the product $x \cdot y$. Here $x^{-1}$ denotes the inverse of $x$ in the group $(G, \cdot)$.
\end{defn}

\begin{remark}
Let $(G, \cdot, \bullet)$ be a skew brace. Then it turns out from (\ref{skew-b-id}) that both the groups $(G, \cdot)$ and $(G, \bullet)$ share the same identity element, denoted simply by $e$.
\end{remark}

A skew brace $(G, \cdot, \bullet)$ is said to be a {\bf two-sided skew brace} if the following additional compatibility condition holds:
\begin{align}\label{two-br}
    (xy) \bullet z = (x \bullet z) z^{-1} (y \bullet z), \text{ for } x, y, z \in G.
\end{align}
Any group $(G, \cdot)$ can be realized as a two-sided skew brace  $(G, \cdot, \cdot)$. This is called the trivial two-sided skew brace.

%In the following, we will show that a two-sided skew brace gives rise to a group-rack, hence a skew-rack by conjugation. 

In a two-sided skew brace, we have the following observations.

\begin{lemma}\label{inv-lemma}
Let $(G, \cdot, \bullet)$ be a two-sided skew brace. Then for any $x, y \in G$, we have
\begin{itemize}
    \item[(i)] $(x y^{-1}) \bullet z = (x \bullet z) (y \bullet z)^{-1} z,$
    \item[(ii)] $(x^{-1} y) \bullet z = z (x \bullet z)^{-1} (y \bullet z),$
    \item[(iii)] $x^{-1} (\overline{x}^{-1} \bullet x) x^{-1} = e$, where $\overline{x}$ is the inverse of $x$ in the group $(G, \bullet)$.
\end{itemize}
\end{lemma}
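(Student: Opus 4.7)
The plan is to derive all three identities directly from the right-distributivity axiom (\ref{two-br}), namely $(xy)\bullet z = (x\bullet z)\, z^{-1}(y\bullet z)$, together with the fact that the two group operations share the same identity element $e$ (so that $e\bullet z = z$).

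For (i), I would apply (\ref{two-br}) with $xy^{-1}$ in place of the first argument and $y$ in place of the second. The left-hand side then reduces to $((xy^{-1})\,y)\bullet z = x\bullet z$, while the right-hand side reads $((xy^{-1})\bullet z)\, z^{-1}(y\bullet z)$. Solving for $(xy^{-1})\bullet z$ by right-multiplying with $(y\bullet z)^{-1}$ and then with $z$ yields the stated identity. Part (ii) is entirely analogous: I would instead substitute $x$ and $x^{-1}y$ into (\ref{two-br}), obtaining $y\bullet z = (x\bullet z)\,z^{-1}\bigl((x^{-1}y)\bullet z\bigr)$, and then solve for $(x^{-1}y)\bullet z$ by left-multiplication with $z(x\bullet z)^{-1}$.

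For (iii), the strategy is to first extract a convenient special case of (ii): setting $y = e$ and using $e\bullet z = z$ produces the formula $x^{-1}\bullet z = z(x\bullet z)^{-1}z$. Now substituting $x \mapsto \overline{x}$ and $z \mapsto x$, and invoking the defining identity $\overline{x}\bullet x = e$ of the $\bullet$-inverse, gives $\overline{x}^{-1}\bullet x = x\cdot x$. Conjugating both sides by $x^{-1}$ in the group $(G,\cdot)$ then yields $x^{-1}(\overline{x}^{-1}\bullet x)x^{-1} = e$, as required.

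The proof is essentially a bookkeeping exercise; there is no conceptual obstacle. The only point requiring care is tracking which inverse ($\cdot$-inverse or $\bullet$-inverse) appears in each position, especially in (iii) where both appear together. Once one observes that (\ref{two-br}) becomes a solvable equation in $(ab)\bullet z$ whenever the product $ab$ is a known element, all three identities follow by a single substitution each.
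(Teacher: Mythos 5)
Your proposal is correct and follows essentially the same route as the paper: parts (i) and (ii) are obtained by the same substitutions into the two-sided identity (\ref{two-br}) and solving, and part (iii) uses the same intermediate formula $x^{-1}\bullet z = z(x\bullet z)^{-1}z$ specialized at $\overline{x}$ and $x$. (Only a cosmetic remark: the final step is left- and right-multiplication by $x^{-1}$ rather than conjugation, but the computation is the intended one.)
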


\begin{proof}
    (i) In the identity (\ref{two-br}), take $w = xy$ (i.e. $x = w y^{-1}$). Then we get
    \begin{align*}
        w \bullet z = (w y^{-1} \bullet z) z^{-1} (y \bullet z) \quad \text{ equivalently, } \quad  (wy^{-1} \bullet z) = (w \bullet z) (y \bullet z)^{-1} z.
    \end{align*}

    (ii) Similarly, by taking $w = xy$ (i.e. $y= x^{-1} w$) in the identity (\ref{two-br}), we obtain
    \begin{align*}
        w \bullet z = (x \bullet z) z^{-1}  (x^{-1} w \bullet z)  \quad \text{ equivalently, } \quad  (x^{-1} w \bullet z) = z (x \bullet z)^{-1} (w \bullet z).
    \end{align*}

    (iii) It follows either from part (i) or (ii) that $x^{-1} \bullet z = z (x \bullet z)^{-1} z$, for any $x, z \in G$. Hence
    \begin{align*}
        x^{-1} (\overline{x}^{-1} \bullet x) x^{-1} = x^{-1} x (\overline{x} \bullet x) x x^{-1} = 1_G ~~~ \text{ as } x^{-1} x = x x^{-1} = \overline{x} \bullet x = e.
    \end{align*}
\end{proof}

\begin{prop}\label{prop-similar}
    Let $(G, \cdot, \bullet)$ be a two-sided skew brace. Then $(G, \cdot, \smallblackdiamond)$ is a group-rack, where
    \begin{align*}
        x \smallblackdiamond y := x \bullet y \bullet \overline{x}, \text{ for } x, y \in G.
    \end{align*}
    Hence $(G, \diamond, \smallblackdiamond)$ is a rack-pairing, where $x \diamond y := x y x^{-1}$, for $x, y \in G$. Here $\overline{x}$ is the inverse of $x$ in the group $(G, \smallblackdiamond)$.
\end{prop}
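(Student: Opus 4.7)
The plan is to first establish that $(G, \cdot, \smallblackdiamond)$ is a pointed group-rack, and then invoke Proposition \ref{gpr-skr} to conclude that $(G, \diamond, \smallblackdiamond)$ is a rack-pairing. Since $\smallblackdiamond$ is nothing but conjugation in the group $(G, \bullet)$, the pair $(G, \smallblackdiamond)$ is automatically a rack; moreover, it is pointed at the common identity $e$ of the two group operations, as $x \smallblackdiamond e = x \bullet e \bullet \overline{x} = x \bullet \overline{x} = e$ and $e \smallblackdiamond y = e \bullet y \bullet \overline{e} = y$, for all $x, y \in G$. The substantial task is therefore to verify the group-rack compatibility
\begin{align*}
x \smallblackdiamond (yz) = (x \smallblackdiamond y)(x \smallblackdiamond z), \text{ for all } x, y, z \in G.
\end{align*}

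To prove this, I would first rewrite the left-hand side as $x \bullet (yz) \bullet \overline{x}$ and expand the inner factor via the left-compatibility (\ref{skew-b-id}) to obtain
\begin{align*}
x \smallblackdiamond (yz) = \big( (x \bullet y) \cdot x^{-1} \cdot (x \bullet z) \big) \bullet \overline{x}.
\end{align*}
Next, I would apply the two-sided compatibility (\ref{two-br}) in order to distribute $\bullet\, \overline{x}$ across the triple $\cdot$-product. A first application, with $u = (x \bullet y) \cdot x^{-1}$ and $v = x \bullet z$, gives
\begin{align*}
(u \cdot v) \bullet \overline{x} = (u \bullet \overline{x}) \cdot \overline{x}^{-1} \cdot (v \bullet \overline{x}),
\end{align*}
and a second application to $u$ itself produces an awkward cross term $x^{-1} \bullet \overline{x}$. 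Simplifying this cross term is the main obstacle.

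To handle it, I would specialize Lemma \ref{inv-lemma}(i) by setting $x = e$ there, obtaining $y^{-1} \bullet z = z \cdot (y \bullet z)^{-1} \cdot z$. Applied with $y = x$ and $z = \overline{x}$, together with $x \bullet \overline{x} = e$, this yields $x^{-1} \bullet \overline{x} = \overline{x}^{2}$. Substituting this back into the expansion, the factors $\overline{x}^{-1} \cdot \overline{x}^{2} = \overline{x}$ and $\overline{x} \cdot \overline{x}^{-1} = e$ cause the cross terms to telescope, leaving precisely
\begin{align*}
x \smallblackdiamond (yz) = \big( (x \bullet y) \bullet \overline{x} \big) \cdot \big( (x \bullet z) \bullet \overline{x} \big) = (x \smallblackdiamond y)(x \smallblackdiamond z).
\end{align*}
This establishes that $(G, \cdot, \smallblackdiamond)$ is a pointed group-rack, and then Proposition \ref{gpr-skr} immediately delivers the rack-pairing $(G, \diamond, \smallblackdiamond)$ with $x \diamond y = x y x^{-1}$.
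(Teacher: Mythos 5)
Your proposal is correct and follows essentially the same route as the paper: expand $x \bullet (yz) \bullet \overline{x}$ via (\ref{skew-b-id}), distribute $\bullet\,\overline{x}$ with two applications of (\ref{two-br}), and kill the cross term $\overline{x}^{-1}(x^{-1}\bullet\overline{x})\overline{x}^{-1}$, before invoking Proposition \ref{gpr-skr}. The only cosmetic difference is that you re-derive the cross-term identity from Lemma \ref{inv-lemma}(i) (obtaining $x^{-1}\bullet\overline{x}=\overline{x}^{2}$), whereas the paper cites Lemma \ref{inv-lemma}(iii) directly, which encapsulates the same computation.
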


\begin{proof}
    Since $(G, \bullet)$ is a group, it follows that $(G, \smallblackdiamond)$ is a rack. Next, for any $x, y, z \in G$, we observe that
    \begin{align*}
        x \smallblackdiamond (yz) =~& x \bullet (yz) \bullet \overline{x} \\
        =~& \big(  (x \bullet y) x^{-1} (x \bullet z)  \big) \bullet \overline{x}  \quad (\text{by } (\ref{skew-b-id}))\\
        =~&  \big(  ((x \bullet y) x^{-1}) \bullet \overline{x} \big) \overline{x}^{-1} (x \bullet z \bullet \overline{x}) \quad (\text{by } (\ref{two-br}))\\
        =~& (x \bullet y \bullet \overline{x}) \underbrace{ \overline{x}^{-1} (x^{-1} \bullet \overline{x}) \overline{x}^{-1}}_{=~ e ~~ (\text{Lemma }\ref{inv-lemma} (iii))} (x \bullet z \bullet \overline{x}) \\
        =~& (x \smallblackdiamond y) (x \smallblackdiamond z).
    \end{align*}
    This proves that $(G, \cdot, \smallblackdiamond)$ is a group-rack. This is a pointed group-rack with $e$ as a distinguished element for the underlying rack. Hence the last part follows from Proposition \ref{gpr-skr}.
\end{proof}

In Example \ref{tr-rack-p}, we have seen that any rack can be regarded as trivial rack-pairing. On the other hand, any group is a trivial two-sided skew brace. Moreover, we have the obvious commutative diagram

\[
\xymatrix{
\mathrm{ group } \ar\ar@{^{(}->}[d] \ar[rr]^{\text{ rackification }} & & \mathrm{ rack } \ar\ar@{^{(}->}[d] \\
\text{two-sided skew brace} \ar[rr]_{\text{ rackification }} & & \text{rack-pairing.}
}
\]

\medskip

Like an averaging operator on a group gives rise to a rack structure, an averaging operator on a two-sided skew brace induces rack-pairing.
Let $(G, \cdot, \bullet)$ be a two-sided skew brace. 
%We say that the skew brace $(G, \cdot, \bullet)$ acts on a set $X$ if there exist maps  $\Phi_\cdot :  G \times X \rightarrow X$ and $\Phi_\bullet : G \times X \rightarrow X$ that satisfy the following

%- the group $(G, \cdot)$ acts on the set $X$ by the map $\Phi_\cdot$,

%- the group $(G, \bullet)$ acts on the set $X$ by the map $\Phi_\bullet$,

%- the maps $\Phi_\cdot$ and $\Phi_\bullet$ satisfy the compatibility: for $x, y \in G$,
%\begin{align}\label{action-comp}
%    (\Phi_\bullet)_x (\Phi_\cdot)_y = (\Phi_\cdot)_{(x \bullet y) x^{-1}} (\Phi_\bullet)_x.
%\end{align}

\begin{defn}
    Let $(G, \cdot, \bullet)$ be a two-sided skew brace.
    %which acts on a set $X$ by the maps $\Phi_\cdot$ and $\Phi_\bullet$. 
    An {\bf averaging operator} on $G$ is a map $\mathcal{A}: G \rightarrow G$ that satisfies
    \begin{align*}
        \mathcal{A}(x) \mathcal{A}(y) \mathcal{A}(x)^{-1} = \mathcal{A} ( \mathcal{A}(x) y \mathcal{A}(x)^{-1}) ~~~ \text{ and } ~~~ \mathcal{A}(x) \bullet \mathcal{A}(y) \bullet \overline{\mathcal{A}(x)} = \mathcal{A} ( \mathcal{A}(x) \bullet y \bullet \overline{\mathcal{A}(y)}), \text{ for } x, y \in X.
    \end{align*}
    It is called pointed if further $\mathcal{A} (e) = e$.
\end{defn}

\begin{prop}
    Let $(G, \cdot, \bullet)$ be a two-sided skew brace and $\mathcal{A}: G \rightarrow G$ be a pointed averaging operator. Then $(G, \diamond_\mathcal{A}, \smallblackdiamond_\mathcal{A})$ is a rack-pairing, where
    \begin{align*}
        x \diamond_\mathcal{A} y := \mathcal{A}(x) y \mathcal{A}(x)^{-1} ~~~ \text{ and } ~~~ x \smallblackdiamond_\mathcal{A} y := \mathcal{A}(x) \bullet y \bullet \overline{\mathcal{A}(x)}, \text{ for } x, y \in X.
    \end{align*}
\end{prop}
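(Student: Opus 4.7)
The assertion breaks into three claims: (i) $(G, \diamond_\mathcal{A})$ is a rack, (ii) $(G, \smallblackdiamond_\mathcal{A})$ is a rack, and (iii) the mixed compatibility (\ref{rack-pair-comp}) holds, i.e., $x \smallblackdiamond_\mathcal{A}(y \diamond_\mathcal{A} z) = (x \smallblackdiamond_\mathcal{A} y) \diamond_\mathcal{A}(x \smallblackdiamond_\mathcal{A} z)$. My plan is to knock off (i) and (ii) by invoking the earlier group-case results, and then establish (iii) by a direct computation whose engine is Proposition \ref{prop-similar}.

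For (i) and (ii), observe that the two averaging identities in the definition of an averaging operator on a two-sided skew brace are literally the group-level averaging identity (\ref{avg-ident}) for $\mathcal{A}$ applied to $(G, \cdot)$ and to $(G, \bullet)$, respectively. Hence Proposition \ref{prop-des}(i) applied to the group $(G, \cdot)$ furnishes the descendent rack $(G, \diamond_\mathcal{A})$, and applied to the group $(G, \bullet)$ furnishes the descendent rack $(G, \smallblackdiamond_\mathcal{A})$, where one reads $x \smallblackdiamond_\mathcal{A} y = \mathcal{A}(x) \bullet y \bullet \overline{\mathcal{A}(x)}$ as conjugation by $\mathcal{A}(x)$ in the group $(G, \bullet)$.

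For (iii), write $x \diamond y := xyx^{-1}$ and $x \smallblackdiamond y := x \bullet y \bullet \overline{x}$, so that $x \diamond_\mathcal{A} y = \mathcal{A}(x) \diamond y$ and $x \smallblackdiamond_\mathcal{A} y = \mathcal{A}(x) \smallblackdiamond y$. Proposition \ref{prop-similar} tells us that $(G, \cdot, \smallblackdiamond)$ is a pointed group-rack, so for every $a \in G$ the translation $a \smallblackdiamond - : G \to G$ is a group homomorphism of $(G, \cdot)$ that in particular preserves inverses. Hence
\begin{align*}
x \smallblackdiamond_\mathcal{A}(y \diamond_\mathcal{A} z)
&= \mathcal{A}(x) \smallblackdiamond \bigl( \mathcal{A}(y)\, z\, \mathcal{A}(y)^{-1} \bigr) \\
&= \bigl(\mathcal{A}(x) \smallblackdiamond \mathcal{A}(y)\bigr) \cdot \bigl(\mathcal{A}(x) \smallblackdiamond z\bigr) \cdot \bigl(\mathcal{A}(x) \smallblackdiamond \mathcal{A}(y)\bigr)^{-1}.
\end{align*}
The second averaging identity now rewrites $\mathcal{A}(x) \smallblackdiamond \mathcal{A}(y) = \mathcal{A}(\mathcal{A}(x) \smallblackdiamond y) = \mathcal{A}(x \smallblackdiamond_\mathcal{A} y)$, turning the expression into $\mathcal{A}(x \smallblackdiamond_\mathcal{A} y) \diamond (x \smallblackdiamond_\mathcal{A} z) = (x \smallblackdiamond_\mathcal{A} y) \diamond_\mathcal{A}(x \smallblackdiamond_\mathcal{A} z)$, which is exactly what is needed. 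The sole substantive maneuver is invoking Proposition \ref{prop-similar} to spread $\mathcal{A}(x) \smallblackdiamond -$ across the $\diamond$-conjugation, and I do not anticipate any essential obstacle beyond that.
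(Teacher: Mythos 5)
Your proposal is correct and follows essentially the same route as the paper: both get the two rack structures from the group-level averaging identities, both reduce the mixed compatibility to the fact that $\mathcal{A}(x)\smallblackdiamond - $ distributes over the product of $(G,\cdot)$ (the paper redoes the computation of Proposition \ref{prop-similar}, you simply cite it), and both finish by applying the second averaging identity to identify $\mathcal{A}(x)\smallblackdiamond\mathcal{A}(y)$ with $\mathcal{A}(x\smallblackdiamond_\mathcal{A}y)$.
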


\begin{proof}
Since $\mathcal{A} : G \rightarrow G$ is an averaging operator for both the groups $(G, \cdot)$ and $(G, \bullet)$, it follows that $\diamond_\mathcal{A}$ and $\smallblackdiamond_\mathcal{A}$ are both rack structures on $G$. Moreover, by a calculation similar to the one given in Proposition \ref{prop-similar}, one yields that
\begin{align*}
    x \smallblackdiamond_\mathcal{A} (yz ) = \mathcal{A}(x) \bullet (yz) \bullet \overline{ \mathcal{A}(x)} = (x \smallblackdiamond_\mathcal{A} y) (x \smallblackdiamond_\mathcal{A} z),
\end{align*}
for any $x, y, z \in G$. Hence we have
\begin{align*}
    x \smallblackdiamond_\mathcal{A} (y \diamond_\mathcal{A} z) =~& x \smallblackdiamond_\mathcal{A} ( \mathcal{A}(y) z \mathcal{A}(y)^{-1} ) \\
    =~& (x \smallblackdiamond_\mathcal{A} \mathcal{A} (y) ) (x \smallblackdiamond_\mathcal{A} z) (x \smallblackdiamond_\mathcal{A} \mathcal{A} (y)^{-1} ) \\
    =~& (\mathcal{A} (x) \bullet \mathcal{A} (y) \bullet \overline{ \mathcal{A} (x)})  (x \smallblackdiamond_\mathcal{A} z) (\mathcal{A} (x) \bullet \mathcal{A} (y) \bullet \overline{ \mathcal{A} (x)})^{-1} \\
    =~& \mathcal{A} (x \smallblackdiamond_\mathcal{A} y) (x \smallblackdiamond_\mathcal{A} z)  \mathcal{A} (x \smallblackdiamond_\mathcal{A} y)^{-1} \\
    =~& (x \smallblackdiamond_\mathcal{A} y) \diamond_\mathcal{A} (x \smallblackdiamond_\mathcal{A} y)
\end{align*}
which proves the desired compatibility.
\end{proof}

%    From the compatibility condition (\ref{action-comp}), it follows that
%    \begin{align*}
 %       (\Phi_0)_{A(x)} (\Phi_\cdot)_{A(y)} z = (\Phi_\cdot)_{A(x) \circ A(y) \circ A(x)^{-1 (0)}} (\Phi_0)_{A(x)} z.
%    \end{align*}
%    This is equivalent to
%    \begin{align*}
%        x \diamond (y \smallblackdiamond z) = (x \diamond y) \smallblackdiamond (x \diamond z).
 %   \end{align*}
%    Hence the compatibility condition of skew-rack follows. This proves the result.
%\end{proof}

In the following, we give some characterizations of rack-pairing on a given rack. Let $G$ be a group and $\Phi : G \times X \rightarrow X$ be an action of $G$ on a set $X$. A  map $\pi : X \rightarrow G$ is said to be {\bf equivariant} if it satisfies
\begin{align*}
    \pi (\Phi_g x) = g \pi (x) g^{-1} \quad \text{ equivalently} \quad \pi (\Phi_g x) = \mathrm{Ad}_g \pi (x), \text{ for all } g \in G, x \in X.
\end{align*}
Let $(Q, \diamond)$ be a rack. 
%Then for any $x \in Q$, the map $L_x : Q \rightarrow Q$, $y \mapsto x \diamond y$ is a rack automorphism. 
Let $\mathrm{Inn}(Q, \diamond)$ be the subgroup of $\mathrm{Aut}(Q, \diamond)$ generated by the set $\{ L_x | x \in Q \}$. Then we have already seen that the left-distributivity of the rack is equivalent to the fact that
\begin{align}\label{l-red}
L_{x \diamond y} = L_x L_y L_x^{-1}, \text{ for all } x, y \in Q.
\end{align}
The group $\mathrm{Inn}(Q, \diamond)$ is called the group of inner automorphisms of $(Q, \diamond)$. Note that the group $\mathrm{Inn}(Q, \diamond)$ acts on the set $Q$ by the natural map $\Phi : \mathrm{Inn}(Q, \diamond) \times Q \rightarrow Q$ given by $\Phi_f y = f(y)$, for $f \in \mathrm{Inn}(Q, \diamond)$ and $y \in Q$. We have the following result \cite{solo}.

\begin{prop}\label{rack-prop}
    Let $Q$ be any nonempty set. Then the following are equivalent.

    (i) A rack structure on $Q$,

    (ii) A group $G$, a faithful $G$-action on $Q$ and an equivariant map $\pi : Q \rightarrow G$ such that $\pi (Q)$ generates the group $G$.
\end{prop}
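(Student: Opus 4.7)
The plan is to exhibit explicit constructions going both ways. For the implication (i) $\Rightarrow$ (ii), I would take $G := \mathrm{Inn}(Q, \diamond)$ acting tautologically on $Q$, and define $\pi : Q \rightarrow G$ by $\pi(x) := L_x$. The action is faithful since $\mathrm{Inn}(Q, \diamond)$ sits inside $\mathrm{Aut}(Q, \diamond)$, and $\pi(Q) = \{ L_x : x \in Q\}$ generates $G$ by the very definition of $\mathrm{Inn}(Q, \diamond)$. The only real content is to verify equivariance $\pi(\Phi_g y) = g \pi(y) g^{-1}$ for every $g \in G$ and $y \in Q$: on the generators $g = L_z$ this is exactly the identity \eqref{l-red}, namely $L_{z \diamond y} = L_z L_y L_z^{-1}$, and on general $g$ one proceeds by induction on the length of a word in $\{L_z^{\pm 1}\}$ expressing $g$, using the action axiom $\Phi_{gh} = \Phi_g \Phi_h$ to pass from products of generators to arbitrary elements.

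For the reverse direction, given data $(G, \Phi, \pi)$ as in (ii), I would define
\[
x \diamond y := \Phi_{\pi(x)} y, \qquad \text{for all } x, y \in Q.
\]
The uniqueness of $c$ satisfying $x \diamond c = y$ is immediate, since $\Phi_{\pi(x)}$ is a bijection of $Q$ with inverse $\Phi_{\pi(x)^{-1}}$. Left-distributivity follows from equivariance combined with the action axiom: on the one hand $x \diamond (y \diamond z) = \Phi_{\pi(x)\pi(y)} z$, while on the other,
\[
(x \diamond y) \diamond (x \diamond z) = \Phi_{\pi(\Phi_{\pi(x)} y)} \Phi_{\pi(x)} z = \Phi_{\pi(x)\pi(y)\pi(x)^{-1}} \Phi_{\pi(x)} z = \Phi_{\pi(x)\pi(y)} z,
\]
where the middle equality uses equivariance of $\pi$.

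The main obstacle, and truly the only nontrivial step, is propagating equivariance from the generating set $\pi(Q)$ to all of $G$ in the first direction; once this is in place the rest is bookkeeping. Finally, to see that the two constructions are genuinely inverse: starting from a rack $(Q, \diamond)$, the derived operation reads $\Phi_{L_x} y = L_x(y) = x \diamond y$; conversely, starting from $(G, \Phi, \pi)$ and forming the associated rack, the assignment $L_x \mapsto \pi(x)$ extends to a group isomorphism from $\mathrm{Inn}(Q, \diamond)$ onto $G$ because $\pi(Q)$ generates $G$ and the hypothesis that the action is faithful guarantees it is well-defined.
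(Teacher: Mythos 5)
Your proposal is correct and follows essentially the same route as the paper: take $G = \mathrm{Inn}(Q,\diamond)$ with $\pi(x)=L_x$ and the identity $L_{x\diamond y}=L_xL_yL_x^{-1}$ for one direction, and define $x\diamond y:=\Phi_{\pi(x)}y$ and verify left-distributivity via equivariance for the other. The only differences are that you spell out the word-length induction extending equivariance from the generators $\pi(Q)$ to all of $G$ (which the paper leaves implicit) and add a remark on the constructions being mutually inverse; both are correct and harmless additions.
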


\begin{proof}
Let $(Q, \diamond)$ be a rack structure on $Q$. Consider the group $G = \mathrm{Inn}(Q, \diamond)$ that acts on the set $Q$ as given above. This action is faithful. We define a map $\pi : Q \rightarrow \mathrm{Inn}(Q, \diamond)$ by $\pi (x) = L_x$, for $x \in Q$. Then $\pi (Q)$ generates the group $\mathrm{Inn}(Q, \diamond)$ and it follows from (\ref{l-red}) that $\pi$ is an equivariant map.

Conversely, let $G$ be a group, $\Phi : G \times Q \rightarrow Q$ be a faithful $G$-action on $Q$ and $\pi : Q \rightarrow G$ be an equivariant map such that $\pi (Q)$ generates $G$. Define a binary operation $\diamond : Q \times Q \rightarrow Q$ by $x \diamond y := \Phi_{\pi (x) } y$, for $x, y \in Q$. For any $x, y, z \in Q$, we have
\begin{align*}
    x \diamond (y \diamond z) = \Phi_{\pi (x) } \Phi_{\pi(y)} z =~&\Phi_{\pi(x) \pi(y)} z \\
    =~& \Phi_{\pi(x) \pi(y) \pi(x)^{-1} \pi (x)} z \\
=~& \Phi_{ \pi ( \Phi_{\pi(x)} y)} \Phi_{\pi(x)} z \\
=~& \Phi_{\pi (x \diamond y)} \Phi_{\pi(x)} z \\
=~& \Phi_{ \pi (x \diamond y)} (x \diamond z) \\
=~& (x \diamond y) \diamond (x \diamond z).
\end{align*}
Finally, since the map $\Phi_{\pi (x)}$ is a bijection on $Q$, the equation $x \diamond c = y$ has a unique solution for $c$. Hence the pair $(Q, \diamond)$ forms a rack.
\end{proof}

Using the above result, one can prove the following.

\begin{prop}
    Let $(Q, \diamond)$ be a rack. Then the following data are equivalent.

    (i) There exists a binary operation $\smallblackdiamond : Q \times Q \rightarrow Q$ that makes the triple $(Q, \diamond, \smallblackdiamond)$ into a rack-pairing,

    (ii) A group $G$, a faithful $G$-action on $(Q, \diamond)$ by rack automorphisms and an equivariant map $\pi : Q \rightarrow G$ such that $\pi (Q)$ generates the group $G$.
\end{prop}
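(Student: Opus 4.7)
The plan is to closely mirror the proof of Proposition \ref{rack-prop}, inserting the extra compatibility between $\diamond$ and $\smallblackdiamond$ at the natural spot.

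For the direction (i) $\Rightarrow$ (ii), starting from a rack-pairing $(Q, \diamond, \smallblackdiamond)$, the natural choice is to take $G := \mathrm{Inn}(Q, \smallblackdiamond)$ with its evaluation action $\Phi$ on $Q$. This action is tautologically faithful because $G$ is by construction a subgroup of the symmetric group of $Q$. The rack-pairing compatibility (\ref{rack-pair-comp}) says exactly that each generator $L_x^\smallblackdiamond = x \smallblackdiamond -$ is a rack automorphism of $(Q, \diamond)$, and since the rack automorphisms of $(Q, \diamond)$ form a subgroup of $\mathrm{Sym}(Q)$, the whole group $G$ acts on $(Q, \diamond)$ by rack automorphisms. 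Define $\pi : Q \rightarrow G$ by $\pi(x) := L_x^\smallblackdiamond$; then $\pi(Q)$ generates $G$ by definition. Equivariance on generators, $\pi(\Phi_{L_x^\smallblackdiamond} y) = L_x^\smallblackdiamond \pi(y) (L_x^\smallblackdiamond)^{-1}$, is just the left-distributivity identity $L_{x \smallblackdiamond y}^\smallblackdiamond = L_x^\smallblackdiamond L_y^\smallblackdiamond (L_x^\smallblackdiamond)^{-1}$ for $\smallblackdiamond$; propagating this to arbitrary elements of $G$ is a routine induction on word length using $\Phi_{gh} = \Phi_g \Phi_h$ and inversion.

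For the direction (ii) $\Rightarrow$ (i), given data $(G, \Phi, \pi)$, define $x \smallblackdiamond y := \Phi_{\pi(x)} y$. Proposition \ref{rack-prop} already guarantees that $(Q, \smallblackdiamond)$ is a rack, so the only remaining task is to check the compatibility $x \smallblackdiamond (y \diamond z) = (x \smallblackdiamond y) \diamond (x \smallblackdiamond z)$. This is immediate, since $\Phi_{\pi(x)}$ is assumed to be a rack automorphism of $(Q, \diamond)$, so $\Phi_{\pi(x)}(y \diamond z) = \Phi_{\pi(x)}(y) \diamond \Phi_{\pi(x)}(z)$.

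No step is really obstructive; the whole argument is a moderate extension of Proposition \ref{rack-prop}. The most delicate point is the bookkeeping needed to upgrade equivariance from the generating set $\pi(Q)$ to the full group $G$ in the first direction, but this is a standard exercise. One may also observe that the two constructions are mutually inverse up to the obvious identifications: starting from $\smallblackdiamond$ in (i) and reconstructing via $x \smallblackdiamond' y := \Phi_{\pi(x)}(y) = L_x^\smallblackdiamond(y) = x \smallblackdiamond y$ recovers the original operation, which confirms that (i) and (ii) genuinely describe the same data.
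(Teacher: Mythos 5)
Your proposal is correct and follows essentially the same route as the paper: both directions reduce to Proposition \ref{rack-prop} (taking $G=\mathrm{Inn}(Q,\smallblackdiamond)$ with $\pi(x)=L_x^{\smallblackdiamond}$ in one direction and $x\smallblackdiamond y:=\Phi_{\pi(x)}y$ in the other), and the extra rack-pairing compatibility is identified exactly with the $G$-action being by rack automorphisms of $(Q,\diamond)$. Your version merely spells out details (faithfulness, propagation of equivariance from generators, mutual inverseness) that the paper leaves implicit.
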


\begin{proof}
    We have already seen in Proposition \ref{rack-prop} that a (new) rack structure $\smallblackdiamond : Q \times Q \rightarrow Q$ on the set $Q$ is equivalent to having a group $G$, a faithful $G$-action on the set $Q$ and an equivariant map $\pi : Q \rightarrow G$ such that $\pi (Q)$ generates $G$. Under this correspondence, the triple $(Q, \diamond, \smallblackdiamond)$ is a rack-pairing if and only if the $G$-action on the set $Q$ is an action by rack automorphisms of $(Q, \diamond)$.
\end{proof}

Let $G$ be a group. In \cite{guar}, Guarnieri and Vendramin gave a characterization of skew brace structures on $G$ in terms of regular subgroups of the holomorph. We generalize their result in the context of racks.

Let $(Q, \diamond)$ be a rack. Then the holomorph of $Q$ is the rack $\mathrm{Hol}(Q, \diamond) := \mathrm{Aut} (Q, \diamond) \times Q$ with the binary operation
\begin{align*}
    (f, x) \overline{\diamond} (g, y) = (f g f^{-1}, f(y)), \text{ for } (f, x) , (g, y) \in \mathrm{Hol}(Q, \diamond).
\end{align*}
Let $H \subset \mathrm{Hol}(Q, \diamond)$ be any subrack. Then $H$ acts on the set $Q$ by
\begin{align*}
    \Psi_{(f, x)} y = f(y), \text{ for } (f, x) \in H \text{ and } y \in Q.
\end{align*}
Let $\pi_2: \mathrm{Hol} (Q, \diamond) \rightarrow Q$ be the projection onto the second factor. A subrack $H \subset \mathrm{Hol}(Q, \diamond)$ is said to be {\bf regular} if the map $\pi_2|_H: H \rightarrow Q$ is bijective. 

With the above notations, we have the following result.

\begin{thm}
    Let $(Q, \diamond)$ be a rack. Then there is a one-to-one correspondence between rack-pairing structures on $(Q, \diamond)$ and regular subracks of $\mathrm{Hol}(Q, \diamond)$.
\end{thm}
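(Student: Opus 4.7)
The plan is to construct two mutually inverse assignments, mimicking the Guarnieri--Vendramin characterization of skew braces as regular subgroups of the holomorph. In one direction, given a rack-pairing $(Q, \diamond, \smallblackdiamond)$, I would set
\[
H_\smallblackdiamond := \{ (L_x^\smallblackdiamond, x) \mid x \in Q \} \subset \mathrm{Hol}(Q, \diamond).
\]
Each $L_x^\smallblackdiamond$ lies in $\mathrm{Aut}(Q, \diamond)$ by the rack-pairing compatibility (\ref{rack-pair-comp}), so this set is well-defined, and $\pi_2|_{H_\smallblackdiamond}$ is manifestly a bijection onto $Q$. Closure of $H_\smallblackdiamond$ under $\overline{\diamond}$ follows from a direct computation: using the identity $L_x^\smallblackdiamond L_y^\smallblackdiamond (L_x^\smallblackdiamond)^{-1} = L_{x \smallblackdiamond y}^\smallblackdiamond$ for the rack $(Q, \smallblackdiamond)$, one gets
\[
(L_x^\smallblackdiamond, x) \,\overline{\diamond}\, (L_y^\smallblackdiamond, y) = \big( L_x^\smallblackdiamond L_y^\smallblackdiamond (L_x^\smallblackdiamond)^{-1}, L_x^\smallblackdiamond(y) \big) = (L_{x \smallblackdiamond y}^\smallblackdiamond, x \smallblackdiamond y) \in H_\smallblackdiamond.
\]

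Conversely, starting from a regular subrack $H \subset \mathrm{Hol}(Q, \diamond)$, for each $x \in Q$ let $f_x \in \mathrm{Aut}(Q, \diamond)$ be the unique element with $(f_x, x) \in H$, and define $x \smallblackdiamond y := f_x(y)$. Since each $f_x$ is a bijection of $Q$, the equation $x \smallblackdiamond c = y$ has the unique solution $c = f_x^{-1}(y)$. For left-distributivity of $\smallblackdiamond$, I would exploit the regularity of $H$: closure of $H$ under $\overline{\diamond}$ gives $(f_x f_y f_x^{-1}, f_x(y)) \in H$, and by uniqueness of the lift over $f_x(y) = x \smallblackdiamond y$ we must have $f_x f_y f_x^{-1} = f_{x \smallblackdiamond y}$. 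Therefore
\[
x \smallblackdiamond (y \smallblackdiamond z) = f_x f_y(z) = f_{x \smallblackdiamond y} f_x(z) = (x \smallblackdiamond y) \smallblackdiamond (x \smallblackdiamond z),
\]
so $(Q, \smallblackdiamond)$ is a rack. The compatibility $x \smallblackdiamond (y \diamond z) = (x \smallblackdiamond y) \diamond (x \smallblackdiamond z)$ is immediate from $f_x \in \mathrm{Aut}(Q, \diamond)$, so $(Q, \diamond, \smallblackdiamond)$ is a rack-pairing.

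Finally I would check that the two constructions are mutually inverse: if $\smallblackdiamond$ arises from $H$ then $L_x^\smallblackdiamond = f_x$ and hence $H_\smallblackdiamond = H$, while if $H_\smallblackdiamond$ is built from a rack-pairing then the unique first coordinate over $x$ is $L_x^\smallblackdiamond$, recovering $\smallblackdiamond$. The one subtle step I expect to be the crux is using regularity of $H$ to deduce $f_x f_y f_x^{-1} = f_{x \smallblackdiamond y}$; once that identity is in hand, the left-distributivity of $\smallblackdiamond$ and the whole correspondence unwind formally.
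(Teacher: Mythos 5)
Your proposal is correct and follows essentially the same route as the paper: the same graph-type subrack $\{(L_x^\smallblackdiamond, x)\}$ in one direction, and in the other the same use of regularity to identify the lift of $x \smallblackdiamond y$ with $f_x f_y f_x^{-1}$, which is exactly the paper's key step for left-distributivity. You are in fact slightly more complete than the paper, since you spell out the closure computation via $L_{x\smallblackdiamond y}^\smallblackdiamond = L_x^\smallblackdiamond L_y^\smallblackdiamond (L_x^\smallblackdiamond)^{-1}$ and explicitly verify that the two assignments are mutually inverse, which the written proof leaves implicit.
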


\begin{proof}
    Let $(Q, \diamond, \smallblackdiamond)$ be a rack-pairing. For any $x \in Q$, we define $L_x^\smallblackdiamond = x \smallblackdiamond - \in \mathrm{Aut} (Q, \diamond)$. Then it is easy to see that
    \begin{align*}
        H= \{  (L_x^\smallblackdiamond , x) | x \in Q   \}
    \end{align*}
    is a subrack of $\mathrm{Hol}(Q, \diamond)$. Further, $H \subset \mathrm{Hol}(Q, \diamond)$ is obviously regular.

    Conversely, let $H$ be a regular subrack of $\mathrm{Hol}(Q, \diamond)$. Let $x \in Q$ be any element and let $(\pi_2|_H)^{-1} (x) = (f, x) \in H$. We define a binary operation $\smallblackdiamond: Q \times Q \rightarrow Q$ by $x \smallblackdiamond y:= f(y)$, for $x, y \in Q$. Let $x, y, z \in Q$ be arbitrary. Suppose $(\pi_2|_H)^{-1} (x) = (f, x) \in H$ and $(\pi_2|_H)^{-1} (y) = (g, y) \in H$. Then we have
    \begin{align*}
    x \smallblackdiamond ( y \smallblackdiamond z) = x \smallblackdiamond g(z) = fg (z).
    \end{align*}
    On the other hand,
    \begin{align*}
        (x \smallblackdiamond y ) \smallblackdiamond (x \smallblackdiamond z) = f(y) \smallblackdiamond f (z) =~& (fgf^{-1}) (f (z))  \quad (\because (f,x), (g, y) \in H \text{ implies } (fgf^{-1}, f(y)) \in H) \\
        =~& fg (z).
    \end{align*}
    Thus $\smallblackdiamond$ is left-distributive. Moreover, the equation $x \smallblackdiamond c = y$ has a unique solution for $c$ (namely $c = f^{-1} (y)$). Hence $(Q, \smallblackdiamond)$ is a rack. Finally,
    \begin{align*}
        x \smallblackdiamond (y \diamond z) = f (y \diamond z) = f(y) \diamond f(z) = (x \smallblackdiamond y) \diamond (x \smallblackdiamond z)
    \end{align*}
    which proves the compatibility. So $(Q, \diamond , \smallblackdiamond)$ is a rack-pairing.
\end{proof}

\medskip

\noindent {\bf Some particular classes of rack-pairings.} Here we consider $\lambda$-homomorphic and symmetric rack-pairings. Given a rack $(Q, \diamond)$ and an averaging operator $\mathcal{A} : Q \rightarrow Q$, we will discuss the condition under which the induced rack-pairing $(Q, \diamond, \diamond_\mathcal{A})$ is $\lambda$-homomorphic (resp. symmetric).

%
%These are respectively generalization of $\lambda$-homomorphic skew braces and symmetric skew braces considered in \cite{bar-manoj}. 

\begin{prop}
    Let $(Q, \diamond, \smallblackdiamond)$ be a rack-pairing. Then the map
    \begin{align*}
        \lambda : (Q, \smallblackdiamond) \rightarrow \mathrm{Aut}(Q, \diamond), \lambda (x) = L_{x}^{ \smallblackdiamond} 
    \end{align*}
    is a homomorphism of racks, where we equip $\mathrm{Aut}(Q, \diamond)$ with the conjugation rack structure induced from the group structure.
\end{prop}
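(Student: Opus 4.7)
The plan is to verify two things: first that $\lambda(x) = L_x^{\smallblackdiamond}$ genuinely lands in $\mathrm{Aut}(Q,\diamond)$ (so the target of $\lambda$ makes sense), and second that $\lambda$ intertwines the rack operation $\smallblackdiamond$ on $Q$ with the conjugation rack operation on $\mathrm{Aut}(Q,\diamond)$.

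For the first point, I would observe that $L_x^{\smallblackdiamond}$ is a bijection of $Q$ because $(Q,\smallblackdiamond)$ is a rack (the unique solvability of $x \smallblackdiamond c = y$ provides the inverse). The fact that $L_x^{\smallblackdiamond}$ preserves $\diamond$ is exactly the rack-pairing compatibility (\ref{rack-pair-comp}), which reads $x \smallblackdiamond (y \diamond z) = (x \smallblackdiamond y) \diamond (x \smallblackdiamond z)$; this is precisely the statement that $L_x^\smallblackdiamond$ is an endomorphism of $(Q,\diamond)$, and combined with bijectivity it lies in $\mathrm{Aut}(Q,\diamond)$.

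For the second point, I must show $\lambda(x \smallblackdiamond y) = \lambda(x)\,\lambda(y)\,\lambda(x)^{-1}$ in $\mathrm{Aut}(Q,\diamond)$, i.e.\ $L_{x \smallblackdiamond y}^{\smallblackdiamond} = L_x^{\smallblackdiamond}\, L_y^{\smallblackdiamond}\, (L_x^{\smallblackdiamond})^{-1}$. The key identity here is the left-distributivity of $\smallblackdiamond$ itself: for every $w \in Q$,
\begin{align*}
L_x^{\smallblackdiamond}\bigl(L_y^{\smallblackdiamond}(w)\bigr) = x \smallblackdiamond (y \smallblackdiamond w) = (x \smallblackdiamond y) \smallblackdiamond (x \smallblackdiamond w) = L_{x \smallblackdiamond y}^{\smallblackdiamond}\bigl(L_x^{\smallblackdiamond}(w)\bigr).
\end{align*}
Thus $L_x^{\smallblackdiamond} L_y^{\smallblackdiamond} = L_{x \smallblackdiamond y}^{\smallblackdiamond} L_x^{\smallblackdiamond}$ as maps $Q \to Q$, and right-multiplying by $(L_x^{\smallblackdiamond})^{-1}$ gives the desired conjugation formula.

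There is no genuine obstacle in this argument; the statement is essentially a repackaging of the standard fact that in any rack the left translations satisfy $L_{x \smallblackdiamond y} = L_x L_y L_x^{-1}$ (already noted in the excerpt as equation (\ref{l-red}) for a generic rack, applied here to $(Q,\smallblackdiamond)$), together with the rack-pairing compatibility to ensure the codomain is correct. The only minor care needed is to be explicit about why $L_x^{\smallblackdiamond}$ is invertible as a set map (so that conjugation in $\mathrm{Aut}(Q,\diamond)$ is legitimate), which follows directly from the rack axioms for $\smallblackdiamond$.
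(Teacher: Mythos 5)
Your proposal is correct and follows essentially the same route as the paper's proof: the rack-pairing compatibility puts $L_x^{\smallblackdiamond}$ in $\mathrm{Aut}(Q,\diamond)$, and the left-distributivity of $\smallblackdiamond$ gives $L_{x \smallblackdiamond y}^{\smallblackdiamond} = L_x^{\smallblackdiamond} L_y^{\smallblackdiamond} (L_x^{\smallblackdiamond})^{-1}$, which is exactly the conjugation rack structure. You merely spell out the bijectivity of $L_x^{\smallblackdiamond}$ more explicitly than the paper does.
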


\begin{proof}
    Since $(Q, \diamond, \smallblackdiamond)$ is a rack-pairing, it follows from the compatibility condition that $L_x^\smallblackdiamond \in \mathrm{Aut} (Q, \diamond)$. Moreover, we have $\lambda (x \smallblackdiamond y) = L^\smallblackdiamond_{x \smallblackdiamond y} = L_x^\smallblackdiamond L_y^\smallblackdiamond (L_x^\smallblackdiamond)^{-1}$, for any $x, y \in Q$. This proves the result.
\end{proof}

Note that the map $\lambda$ can also be viewed as a map $\lambda : (Q, \diamond) \rightarrow \mathrm{Aut} (Q, \diamond)$, which, in general, may not be a rack homomorphism.

\begin{defn}
A rack-pairing $(Q, \diamond, \smallblackdiamond)$ is said to be {\bf $\lambda$-homomorphic} (resp. {\bf $\lambda$-anti-homomorphic}) if the map $\lambda : (Q, \diamond) \rightarrow \mathrm{Aut} (Q, \diamond), \lambda (x) = L_{x}^{ \smallblackdiamond}$ is a rack homomorphism (resp. rack anti-homomorphism).
\end{defn}

Thus, it follows that a rack-pairing $(Q, \diamond, \smallblackdiamond)$ is $\lambda$-homomorphic if
\begin{align*}
    L^\smallblackdiamond_{x \diamond y} = L_x^\smallblackdiamond L_y^\smallblackdiamond (L_x^\smallblackdiamond)^{-1} \quad \big( \text{equivalently, } x \smallblackdiamond (y \smallblackdiamond z) = (x \diamond y) \smallblackdiamond (x \smallblackdiamond z) \big), \text{ for } x, y, z \in Q.
\end{align*}
On the other hand, $(Q, \diamond, \smallblackdiamond)$ is $\lambda$-anti-homomorphic if
\begin{align*}
    L^\smallblackdiamond_{y \diamond x} = L_x^\smallblackdiamond L_y^\smallblackdiamond (L_x^\smallblackdiamond)^{-1} \quad \big( \text{equivalently, } x \smallblackdiamond (y \smallblackdiamond z) = (y \diamond x) \smallblackdiamond (x \smallblackdiamond z) \big), \text{ for } x, y, z \in Q.
\end{align*}

\begin{defn}
A rack-pairing $(Q, \diamond, \smallblackdiamond)$ is called {\bf symmetric} if $(Q, \smallblackdiamond, \diamond)$ is also a rack-pairing, i.e.
\begin{align}\label{symmetric-rack}
    x \diamond (y \smallblackdiamond z) = (x \diamond y) \smallblackdiamond (x \diamond z) ~~ \text{ also holds, for any } x, y, z \in Q.
\end{align}
\end{defn}

 It follows from (\ref{symmetric-rack}) that a rack-pairing $(Q, \diamond, \smallblackdiamond)$ is symmetric if and only if $L^\smallblackdiamond_{x \diamond y} = L_x^\diamond L_y^\smallblackdiamond (L_x^\diamond)^{-1}$, for all $x, y \in Q$.

\begin{remark}
 The notion of multi-racks is a generalization of racks (see, for instance, \cite{turaev}). Recall that a multi-rack labelled by a nonempty set $S$ is a nonempty set $Q$ equipped with a collection $\{ \diamond_s : Q \times Q \rightarrow Q \}_{s \in S}$ of binary operations such that the left multiplication $L_x^{\diamond_s} = x \diamond_s -$ is bijective on $Q$ (for all $x \in Q$, $s \in S$) and satisfies
 \begin{align*}
     x \diamond_s (y \diamond_t z) = (x \diamond_s y) \diamond_t (x \diamond_s z), \text{ for } x, y, z \in Q \text{ and } s, t \in S.
 \end{align*}
It follows that a multi-rack labelled by a set $S$ consisting of two elements is nothing but a symmetric rack-pairing.
\end{remark}

%\begin{prop}
 %   Every $\lambda$-anti-homomorphic rack-pairing is symmetric.
%\end{prop}

%\begin{proof}   
%\end{proof}

%Let $(Q, \diamond, \smallblackdiamond)$ be a skew-rack. It is said to be $\lambda$-homomorphic if the map $\lambda : (Q, \diamond) \rightarrow \mathrm{Aut}(Q, \diamond)$, $\lambda (x) = x \smallblackdiamond -$ is a homomorphism of racks, i.e.,
%\begin{align*}
%    \lambda (x \diamond y) = \lambda (x) \lambda (y) \lambda(x)^{-1}  ~~~(\text{Equivalently, } (x \diamond y) \smallblackdiamond (x \smallblackdiamond z) = x \smallblackdiamond (y \smallblackdiamond z), \text{ for all } x, y, z \in Q).
%\end{align*}

\begin{prop}
    Let $(Q, \diamond)$ be a rack and $\mathcal{A}: Q \rightarrow Q$ be an averaging operator. Then the induced rack-pairing $(Q, \diamond, \diamond_\mathcal{A})$ is $\lambda$-homomorphic if and only if
    \begin{align*}
        L^\diamond_{\mathcal{A} (x \diamond y)} = L^\diamond_{  (\mathcal{A} (x) \diamond \mathcal{A} (y))}, \text{ for all } x, y \in Q.
    \end{align*}
    In particular, if $\mathcal{A}$ is also a rack homomorphism then $(Q, \diamond, \diamond_\mathcal{A})$ is $\lambda$-homomorphic.
\end{prop}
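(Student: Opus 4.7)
The plan is to simply unfold the definition of $\lambda$-homomorphic in the special case $\smallblackdiamond = \diamond_\mathcal{A}$ and rewrite the left translations of $\diamond_\mathcal{A}$ in terms of left translations of $\diamond$. First, I would recall from the text that the rack-pairing $(Q, \diamond, \diamond_\mathcal{A})$ is $\lambda$-homomorphic precisely when
\begin{align*}
L^{\diamond_\mathcal{A}}_{x \diamond y} \;=\; L^{\diamond_\mathcal{A}}_x \, L^{\diamond_\mathcal{A}}_y \, (L^{\diamond_\mathcal{A}}_x)^{-1}, \qquad \text{for all } x, y \in Q.
\end{align*}

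Next, I would use the key identification $L^{\diamond_\mathcal{A}}_x = L^{\diamond}_{\mathcal{A}(x)}$, which is immediate from $x \diamond_\mathcal{A} z = \mathcal{A}(x) \diamond z$. This lets me rewrite the left-hand side of the above identity as $L^{\diamond}_{\mathcal{A}(x \diamond y)}$. For the right-hand side, I would use the standard rack identity $L^\diamond_a L^\diamond_b (L^\diamond_a)^{-1} = L^\diamond_{a \diamond b}$ in $(Q, \diamond)$, applied with $a = \mathcal{A}(x)$ and $b = \mathcal{A}(y)$, to rewrite the right-hand side as $L^{\diamond}_{\mathcal{A}(x) \diamond \mathcal{A}(y)}$. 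Putting these two rewrites together shows that the $\lambda$-homomorphic condition is exactly equivalent to
\begin{align*}
L^\diamond_{\mathcal{A}(x \diamond y)} \;=\; L^\diamond_{\mathcal{A}(x) \diamond \mathcal{A}(y)}, \qquad \text{for all } x, y \in Q,
\end{align*}
which is the stated criterion.

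For the ``in particular'' part, if $\mathcal{A}$ is additionally a rack homomorphism, then $\mathcal{A}(x \diamond y) = \mathcal{A}(x) \diamond \mathcal{A}(y)$ holds at the level of elements of $Q$, and applying $L^\diamond_{(-)}$ to both sides trivially gives the required equality of left translations. There is essentially no obstacle here: the entire argument is a direct translation between notations and an application of the defining left-distributivity relation for the rack $(Q, \diamond)$. The only point worth stressing is that the criterion is formulated at the level of left translations (rather than elements), so the weaker condition $L^\diamond_{\mathcal{A}(x \diamond y)} = L^\diamond_{\mathcal{A}(x) \diamond \mathcal{A}(y)}$ suffices even when $\mathcal{A}(x \diamond y)$ and $\mathcal{A}(x) \diamond \mathcal{A}(y)$ differ as elements of $Q$, as long as they act identically under the $\diamond$-translation.
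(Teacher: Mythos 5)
Your proof is correct and follows essentially the same route as the paper: both arguments unfold the definition of $\lambda$-homomorphic for $\smallblackdiamond = \diamond_\mathcal{A}$ and reduce it to the equality of left translations $L^\diamond_{\mathcal{A}(x \diamond y)} = L^\diamond_{\mathcal{A}(x) \diamond \mathcal{A}(y)}$. The only cosmetic difference is that you work directly with the operator identity $L^\diamond_a L^\diamond_b (L^\diamond_a)^{-1} = L^\diamond_{a \diamond b}$, while the paper writes the elementwise condition $(\mathcal{A}(x) \diamond \mathcal{A}(y)) \diamond (\mathcal{A}(x) \diamond z) = \mathcal{A}(x \diamond y) \diamond (\mathcal{A}(x) \diamond z)$ and then uses surjectivity of $z \mapsto \mathcal{A}(x) \diamond z$; the two computations are the same.
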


\begin{proof}
    Note that $(Q, \diamond, \diamond_\mathcal{A})$ is $\lambda$-homomorphic if and only if $ x \diamond_\mathcal{A} (y \diamond_\mathcal{A} z) = (x \diamond y) \diamond_\mathcal{A} (x \diamond_\mathcal{A} z),$ for all $x , y, z \in Q$. This is equivalent to the condition that
    \begin{align*}
       (\mathcal{A} (x) \diamond \mathcal{A} (y)) \diamond ( \mathcal{A} (x) \diamond z    ) =  \mathcal{A} (x \diamond y ) \diamond ( \mathcal{A} (x) \diamond z) , \text{ for } x, y, z \in Q.
    \end{align*}
    This holds if and only if $ L^\diamond_{  (\mathcal{A} (x) \diamond \mathcal{A} (y))} = L^\diamond_{\mathcal{A} (x \diamond y)} $, for all $x, y \in Q.$ The last part is straightforward.
\end{proof}

%Let $(Q, \diamond)$ be a rack. A {\bf module} over $(Q, \diamond)$ is a set $X$ with a map $\Phi : Q \times X \rightarrow X,(x, a) \mapsto \Phi_x a$ such that $\Phi_x : X \rightarrow X$ is bijective for all $x \in Q$ ans satisfies $\Phi_x \Phi_y = \Phi_{x \diamond y} \Phi_x$, for all $x, y \in Q$. In this case, we also say that the rack $(Q, \diamond)$ acts on the set $X$ by the maps $\Phi_x$'s. 
%Given a module $(X, \Phi)$, a set-map $\pi : X \rightarrow Q$ is said to be {\bf equivariant} if it satisfies
%\begin{align*}
%    \pi (\Phi_x a) = x \diamond \pi (a), \text{ for } x \in Q, a \in X.
%\end{align*}

%\begin{remark}
%     Let $(Q, \diamond_Q)$ and $ (R, \diamond_R)$ be two racks. Suppose the rack $(Q, \diamond_Q)$ acts on $R$ by a map $\Phi : Q \times R \rightarrow R$ such that for each $x \in Q$, the map $\Phi_x : R \rightarrow R$ is a rack automorphism. Let $\mathcal{A}: R \rightarrow Q$ be a map that satisfies $\mathcal{A} (x) \diamond_Q \mathcal{A} (y) = \mathcal{A} ( \Phi_{\mathcal{A}(x) }y)$, for all $x, y \in R$. Then the binary operation $\diamond_\mathcal{A} : R \times R \rightarrow R$ defined by $x \diamond_\mathcal{A} y := \Phi_{\mathcal{A}(x) }y$ is a rack structure on $R$. Moreover, the triple $(R, \diamond_R, \diamond_\mathcal{A})$ is a rack-pairing. This rack-pairing is $\lambda$-homomorphic if and only if $\Phi_{\mathcal{A} (x \diamond_R y)} = \Phi_{\mathcal{A} (x) \diamond_Q \mathcal{A} (y)}$, for all $x, y \in R$. 
%\end{remark}

\begin{prop}
    Let $(Q, \diamond)$ be a rack and $\mathcal{A}: Q \rightarrow Q$ be an averaging operator. Then the induced rack-pairing $(Q, \diamond, \diamond_\mathcal{A})$ is symmetric if and only if $L^\diamond_{\mathcal{A} (x \diamond y)} = L^\diamond_{x \diamond \mathcal{A} (y)}$, for all $x, y \in Q$. In particular, if $\mathcal{A} : Q \rightarrow Q$ is an equivariant map then $(Q, \diamond, \diamond_\mathcal{A})$ is symmetric.
\end{prop}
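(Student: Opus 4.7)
The plan is to unfold the definition of a symmetric rack-pairing and reduce the resulting identity to an equation between left translations. By definition, $(Q, \diamond, \diamond_\mathcal{A})$ is symmetric precisely when
\begin{align*}
    x \diamond (y \diamond_\mathcal{A} z) = (x \diamond y) \diamond_\mathcal{A} (x \diamond z), \text{ for all } x, y, z \in Q.
\end{align*}
Substituting the definition of $\diamond_\mathcal{A}$, this becomes
\begin{align*}
    x \diamond (\mathcal{A}(y) \diamond z) = \mathcal{A}(x \diamond y) \diamond (x \diamond z).
\end{align*}

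Next I would apply the left-distributivity of $\diamond$ to the left-hand side to rewrite it as $(x \diamond \mathcal{A}(y)) \diamond (x \diamond z)$, reducing the symmetry condition to
\begin{align*}
    (x \diamond \mathcal{A}(y)) \diamond (x \diamond z) = \mathcal{A}(x \diamond y) \diamond (x \diamond z).
\end{align*}
Since $L_x^\diamond$ is a bijection on $Q$, as $z$ ranges over $Q$ the element $w := x \diamond z$ also ranges over all of $Q$. Therefore the displayed identity holds for every $z$ if and only if $L^\diamond_{x \diamond \mathcal{A}(y)}(w) = L^\diamond_{\mathcal{A}(x \diamond y)}(w)$ for every $w \in Q$, that is, $L^\diamond_{x \diamond \mathcal{A}(y)} = L^\diamond_{\mathcal{A}(x \diamond y)}$. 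This establishes the main equivalence.

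For the final assertion, I interpret the equivariance of $\mathcal{A}: Q \to Q$ in the sense suggested by Proposition \ref{rack-prop}: $\mathcal{A}$ commutes with the inner action of $(Q,\diamond)$ on itself, i.e.\ $\mathcal{A}(x \diamond y) = x \diamond \mathcal{A}(y)$ for all $x, y \in Q$. Under this condition the identity $L^\diamond_{\mathcal{A}(x \diamond y)} = L^\diamond_{x \diamond \mathcal{A}(y)}$ is immediate, so the criterion of the first part is automatically satisfied and $(Q, \diamond, \diamond_\mathcal{A})$ is symmetric.

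No real obstacle is expected here, since the argument is essentially a rewriting of the symmetry axiom using left-distributivity and the bijectivity of each $L_x^\diamond$. The only point that deserves care is the passage from the quantified identity in $z$ to an equality of left translations, which is precisely where the bijection $L_x^\diamond: Q \to Q$ is invoked.
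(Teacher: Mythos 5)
Your proposal is correct and follows essentially the same route as the paper's proof: rewrite both sides of the symmetry condition using left-distributivity and the definition of $\diamond_\mathcal{A}$, and then compare left translations. The only difference is that you make explicit the step the paper leaves implicit, namely that bijectivity of $L_x^\diamond$ lets one pass from the identity quantified over $z$ to the equality of maps $L^\diamond_{x \diamond \mathcal{A}(y)} = L^\diamond_{\mathcal{A}(x \diamond y)}$; your reading of equivariance as $\mathcal{A}(x \diamond y) = x \diamond \mathcal{A}(y)$ is also the intended one.
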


\begin{proof}
    Note that, for any $x, y, z \in Q$, we have
    \begin{align*}
        x \diamond (y \diamond_\mathcal{A} z) = x \diamond (\mathcal{A} (y) \diamond z) = (x \diamond \mathcal{A} (y)) \diamond (x \diamond z)
    \end{align*}
    and
    \begin{align*}
        (x \diamond y) \diamond_\mathcal{A} (x \diamond z) = \mathcal{A} (x \diamond y) \diamond (x \diamond z).
    \end{align*}
    Hence $(Q, \diamond_\mathcal{A}, \diamond)$ is a rack-pairing (i.e. $ x \diamond (y \diamond_\mathcal{A} z) =  (x \diamond y) \diamond_\mathcal{A} (x \diamond z)$, for all $x, y, z \in Q$) if and only if $L^\diamond_{\mathcal{A} (x \diamond y)} = L^\diamond_{x \diamond \mathcal{A} (y)}$, for all $x, y \in Q$. Hence the first part follows. The last part is a consequence of the first part.
\end{proof}

\medskip

\noindent {\bf Di-racks.} Here we will introduce a new algebraic structure, called a di-rack that is intimately related to rack-pairings.

\begin{defn}
    A {\bf di-rack} is a rack $(Q, \diamond)$ equipped with a binary operation $\triangleright : Q \times Q \rightarrow Q$ that satisfy the following conditions:
\begin{itemize}
    \item[-] for each $x \in Q$, the left multiplication map $L_{x}^\triangleright = x \triangleright -  : Q \rightarrow Q$ is a rack automorphism, i.e., $L_{x}^\triangleright$ is a bijective map on $Q$ and have
    \begin{align}\label{dir1}
        x \triangleright (y \diamond z) = (x \triangleright y) \diamond (x \triangleright z), \text{ for } x, y, z \in Q,
    \end{align}
    \item[-] the operation $\triangleright$ satisfies the ``weighted'' left-distributivity as follows:
    \begin{align}\label{dir2}
        x \diamond ( x \triangleright (y \triangleright z)) = (x \diamond (x \triangleright y)) \triangleright (x \diamond (x \triangleright z)), \text{ for } x, y, z \in Q.
    \end{align}
\end{itemize}
A di-rack as above is simply denoted by the triple $(Q, \diamond, \triangleright)$.
\end{defn}

\begin{exam}
    Let $(Q, \diamond)$ be a rack. Then $(Q, \diamond, \triangleright)$ is a di-rack, $x \triangleright y := y$, for $x, y \in Q$. In particular, if $G$ is a group, then $(G, \diamond, \triangleright)$ is a di-rack, where $x \diamond y = xyx^{-1}$ and $x \triangleright y = y$.
\end{exam}

\begin{exam}
    Let $G$ be a group. Then the triple $(G, \diamond, \triangleright')$ is also a di-rack, where $x \triangleright' y := x^{-2} y x^2$, for $x, y \in G$.
\end{exam}

Let $(Q, \diamond, \triangleright)$ and $(Q', \diamond', \triangleright')$ be two di-racks. A {\bf morphism} from $(Q, \diamond, \triangleright)$ to $(Q', \diamond', \triangleright')$ is given a set-map $\varphi : Q \rightarrow Q'$ that satisfies $\varphi (x \diamond y) = \varphi (x) \diamond' \varphi (y)$ and $\varphi (x \triangleright y) = \varphi (x) \triangleright' \varphi (y)$, for all $x, y \in Q$.

Let $(Q, \diamond, \triangleright)$ be a di-rack. If $\diamond$ is the trivial rack (i.e. $x \diamond y = y$, for all $x, y \in Q$) then it follows from (\ref{dir1}) and (\ref{dir2}) that $(Q, \triangleright)$ is a rack. In general, we have the following result.

\begin{prop}
     Let $(Q, \diamond, \triangleright)$ be a di-rack. Define a binary operation $\smallblackdiamond : Q \times Q \rightarrow Q$ by
     \begin{align*}
         x \smallblackdiamond y := x \diamond (x \triangleright y), \text{ for } x, y \in Q.
     \end{align*}
     \begin{itemize}
         \item[(i)] Then $(Q, \smallblackdiamond)$ is a rack, called the sub-adjacent rack of the di-rack $(Q, \diamond, \triangleright)$. Moreover, the triple $(Q, \diamond, \smallblackdiamond)$ is a rack-pairing.
         \item[(ii)] The map $\Phi : Q \times Q \rightarrow Q$, $(x, y) \mapsto \Phi_x y = {x \diamond (x \triangleright y)}$ defines an action of the sub-adjacent rack $(Q, \smallblackdiamond)$ on the rack $(Q, \diamond)$.
     \end{itemize}
    
     Let $(Q, \diamond, \triangleright)$ and $(Q', \diamond', \triangleright')$ be two di-racks and $\varphi : Q \rightarrow Q'$ be a morphism between them. Then $\varphi$ is a morphism between the corresponding sub-adjacent racks.
\end{prop}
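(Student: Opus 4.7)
The plan is to unpack the definition $x \smallblackdiamond y := x \diamond (x \triangleright y)$ and to verify each assertion by combining the left-distributivity of $(Q, \diamond)$ with the two defining axioms (\ref{dir1}) and (\ref{dir2}) of a di-rack.

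For part (i), I would begin by observing the factorization $L_x^\smallblackdiamond = L_x^\diamond \circ L_x^\triangleright$. Since $L_x^\diamond$ is a bijection (as $(Q, \diamond)$ is a rack) and $L_x^\triangleright$ is a bijection by the di-rack hypothesis, the equation $x \smallblackdiamond c = y$ admits a unique solution. The main computation is then the left-distributivity $x \smallblackdiamond (y \smallblackdiamond z) = (x \smallblackdiamond y) \smallblackdiamond (x \smallblackdiamond z)$. Expanding the left-hand side, I would first use axiom (\ref{dir1}) to push $x \triangleright -$ across the $\diamond$-product inside, then apply the left-distributivity of $\diamond$ to split that across the outer $x \diamond -$, and finally invoke the weighted left-distributivity (\ref{dir2}) to fuse the two nested $\triangleright$-applications into $(x \diamond (x \triangleright y)) \triangleright (x \diamond (x \triangleright z))$, which by definition is $(x \smallblackdiamond y) \triangleright (x \smallblackdiamond z)$, and hence reassembles into $(x \smallblackdiamond y) \smallblackdiamond (x \smallblackdiamond z)$. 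The rack-pairing compatibility $x \smallblackdiamond (y \diamond z) = (x \smallblackdiamond y) \diamond (x \smallblackdiamond z)$ will then follow by a strictly shorter computation using only (\ref{dir1}) and the left-distributivity of $\diamond$, with no appeal to (\ref{dir2}).

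For part (ii), since $\Phi_x = L_x^\smallblackdiamond$, bijectivity is already in hand. The requirement that each $\Phi_x$ be a rack automorphism of $(Q, \diamond)$ is exactly the rack-pairing compatibility obtained above. The rack-action identity $\Phi_x \circ \Phi_y = \Phi_{x \smallblackdiamond y} \circ \Phi_x$ is a direct rephrasing of the left-distributivity of $\smallblackdiamond$ applied to $z$, so no extra argument is needed. For the final clause, given a morphism $\varphi \colon (Q, \diamond, \triangleright) \to (Q', \diamond', \triangleright')$ of di-racks, the identity $\varphi(x \smallblackdiamond y) = \varphi(x) \smallblackdiamond' \varphi(y)$ is immediate by applying $\varphi$ to the definition of $\smallblackdiamond$ and using that $\varphi$ intertwines both $\diamond$ and $\triangleright$.

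The main obstacle, such as it is, is identifying the correct point in the derivation of the left-distributivity of $\smallblackdiamond$ at which to insert axiom (\ref{dir2}): it is the only available identity in which a nested $\triangleright$-expression on one side is matched against a $\triangleright$ of two $\smallblackdiamond$-products on the other, so the choice is essentially forced, but this is the one step in the argument that is not purely formal rewriting. Every other verification in the proposition reduces to repeated use of the definition of $\smallblackdiamond$ and the rack/automorphism properties already built into the hypothesis.
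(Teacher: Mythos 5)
Your proposal is correct and follows essentially the same route as the paper's proof: bijectivity of $L_x^\smallblackdiamond$ via the factorization $L_x^\diamond \circ L_x^\triangleright$, left-distributivity of $\smallblackdiamond$ by applying (\ref{dir1}), then left-distributivity of $\diamond$, then (\ref{dir2}) in exactly that order, the shorter (\ref{dir1})-only computation for the rack-pairing compatibility, and the direct intertwining argument for morphisms. Part (ii) is likewise handled identically, reading the action axiom $\Phi_x\Phi_y = \Phi_{x\smallblackdiamond y}\Phi_x$ off the left-distributivity of $\smallblackdiamond$.
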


\begin{proof}
  (i)  Let $x, y \in Q$. Since $(Q, \diamond)$ is a rack, the map $L_{x}^\diamond = x \diamond -$ is bijective on $Q$, and hence there exists a unique element $q \in Q$ such that $x \diamond q = y$. On the other hand, the map $L_{x}^\triangleright = x \triangleright -$ is bijective implying that there exists a unique $c \in Q$ such that $x \triangleright c = q$. Hence we obtain a unique $c$ for which
    \begin{align*}
        x \smallblackdiamond c = x \diamond (x \triangleright c) = x \diamond q = y.
    \end{align*}
    This proves that the map $L_{x}^\smallblackdiamond$ is bijective on $Q$. Finally, for any $x, y, z \in Q$, we have
    \begin{align*}
        x \smallblackdiamond ( y \smallblackdiamond z) 
        =~& x \diamond \big(  x \triangleright ( y  \diamond (y \triangleright z))  \big) \\
        =~& x \diamond \big(  (x \triangleright y) \diamond ( x \triangleright (y \triangleright z)  )  \big) \\
        =~& \big(   x \diamond (x \triangleright y)  \big)   \diamond \big(  x \diamond ( x \triangleright (y \triangleright z)) \big) \\
        =~& \big(   x \diamond (x \triangleright y)  \big)   \diamond \big(  \big(   x \diamond (x \triangleright y)  \big) \triangleright \big(   x \diamond (x \triangleright z)  \big)     \big) \\
        =~& \big(   x \diamond (x \triangleright y)  \big)  \smallblackdiamond \big(   x \diamond (x \triangleright z)  \big)  \\
        =~& (x \smallblackdiamond y) \smallblackdiamond (x \smallblackdiamond z).
    \end{align*}
    This proves the left-distributivity of the operation $\smallblackdiamond$. Hence $(Q, \smallblackdiamond)$ is a rack. Moreover, we have
    \begin{align*}
        x \smallblackdiamond (y \diamond z)
        =~& x \diamond \big( x \triangleright (y \diamond z)  \big) \\
        =~& x \diamond  \big(  (x \triangleright y) \diamond (x \triangleright z)    \big) \\
        =~& \big(  x \diamond (x \triangleright y)  \big) \diamond \big( x \diamond (x \triangleright z)   \big) \\
        =~& (x \smallblackdiamond y) \diamond (x \smallblackdiamond z).
    \end{align*}
    This proves the compatibility condition of rack-pairing.

    (ii) Note that for any $x \in Q$, the map $\Phi_x = L_x^\smallblackdiamond$ is an automorphism of the rack $(Q, \diamond)$. Moreover, for any $x, y \in Q$, we have $\Phi_{x \smallblackdiamond y} = \Phi_x \Phi_y \Phi_x^{-1}$ as the operation $\smallblackdiamond$ defines a rack structure on $Q$. Hence the result follows.

    Finally, if $\varphi : Q \rightarrow Q'$ is a morphism of di-racks from $(Q, \diamond, \triangleright)$ to $(Q', \diamond', \triangleright')$ then
    \begin{align*}
        \varphi (x \smallblackdiamond y) = \varphi (x \diamond (x \triangleright y)) = \varphi (x) \diamond' \varphi (x \triangleright y) = \varphi (x) \diamond'  ( \varphi (x) \triangleright' \varphi (y)) = \varphi (x) \smallblackdiamond' \varphi (y),
    \end{align*}
    for all $x, y \in Q$. This proves the last part.
\end{proof}

\begin{prop}\label{rack-pair-dirack}
    Let $(Q, \diamond, \smallblackdiamond)$ be a rack-pairing. Define a binary operation $\triangleright : Q \times Q \rightarrow Q$ by
    \begin{align*}
        x \triangleright y := (L_x^\diamond)^{-1} (x \smallblackdiamond y), \text{ for } x, y \in Q. 
    \end{align*}
    Then $(Q, \diamond, \triangleright)$ is a di-rack. Moreover, if $\varphi : Q \rightarrow Q'$  is a morphism of rack-pairings from $(Q, \diamond, \smallblackdiamond)$ to $(Q', \diamond', \smallblackdiamond')$, then $\varphi$ is also a morphism of di-racks from $(Q, \diamond, \triangleright)$ to $(Q', \diamond', \triangleright')$.
\end{prop}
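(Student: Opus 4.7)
The plan is to exploit the key bridge identity $x \diamond (x \triangleright y) = x \smallblackdiamond y$, which is an immediate rewriting of the definition $x \triangleright y := (L_x^\diamond)^{-1}(x \smallblackdiamond y)$. With this identity in hand, each axiom of a di-rack reduces to a combination of the rack-pairing compatibility and the left-distributivity of the two rack operations on $Q$.

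First I would verify that $L_x^\triangleright$ is a bijection on $Q$ for every $x$. This is immediate since $L_x^\triangleright = (L_x^\diamond)^{-1} \circ L_x^\smallblackdiamond$ is a composition of bijections ($L_x^\diamond$ is bijective because $(Q,\diamond)$ is a rack, and $L_x^\smallblackdiamond$ is bijective because $(Q,\smallblackdiamond)$ is a rack). Next, for the distributivity $x \triangleright (y \diamond z) = (x \triangleright y) \diamond (x \triangleright z)$, I would set $a = x \triangleright y$ and $b = x \triangleright z$, so that $x \diamond a = x \smallblackdiamond y$ and $x \diamond b = x \smallblackdiamond z$. Applying the rack-pairing compatibility and then the left-distributivity of $\diamond$ gives
\[
x \diamond \bigl((x \triangleright y) \diamond (x \triangleright z)\bigr) = (x \diamond a) \diamond (x \diamond b) = (x \smallblackdiamond y) \diamond (x \smallblackdiamond z) = x \smallblackdiamond (y \diamond z) = x \diamond \bigl(x \triangleright (y \diamond z)\bigr),
\]
and cancelling $L_x^\diamond$ yields the desired equality.

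The main obstacle is the weighted left-distributivity (\ref{dir2}). My plan is to use the bridge identity to rewrite both sides: the left-hand side becomes $x \smallblackdiamond (y \triangleright z)$ and the right-hand side becomes $(x \smallblackdiamond y) \triangleright (x \smallblackdiamond z)$. So the identity (\ref{dir2}) is equivalent to
\[
(x \smallblackdiamond y) \triangleright (x \smallblackdiamond z) = x \smallblackdiamond (y \triangleright z).
\]
To prove this, I would apply $L_{x \smallblackdiamond y}^\diamond$ to the left-hand side and use the bridge identity to get $(x \smallblackdiamond y) \smallblackdiamond (x \smallblackdiamond z)$; then the left-distributivity of $\smallblackdiamond$ rewrites this as $x \smallblackdiamond (y \smallblackdiamond z)$. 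On the other hand, applying $L_{x \smallblackdiamond y}^\diamond = L_{x \diamond (x \triangleright y)}^\diamond$ to the right-hand side and using both the rack-pairing compatibility $x \smallblackdiamond (y \diamond w) = (x \smallblackdiamond y) \diamond (x \smallblackdiamond w)$ (with $w = y \triangleright z$) and the bridge identity gives $(x \smallblackdiamond y) \diamond \bigl(x \smallblackdiamond (y \triangleright z)\bigr) = x \smallblackdiamond \bigl(y \diamond (y \triangleright z)\bigr) = x \smallblackdiamond (y \smallblackdiamond z)$. The two computations match, and cancelling $L_{x \smallblackdiamond y}^\diamond$ (which is injective) gives the required identity.

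Finally, for the morphism part, suppose $\varphi \colon Q \to Q'$ preserves both $\diamond$ and $\smallblackdiamond$. Applying $\varphi$ to the bridge identity $x \diamond (x \triangleright y) = x \smallblackdiamond y$ gives
\[
\varphi(x) \diamond' \varphi(x \triangleright y) = \varphi(x) \smallblackdiamond' \varphi(y) = \varphi(x) \diamond' \bigl(\varphi(x) \triangleright' \varphi(y)\bigr),
\]
and cancelling $L_{\varphi(x)}^{\diamond'}$ yields $\varphi(x \triangleright y) = \varphi(x) \triangleright' \varphi(y)$, so $\varphi$ is a morphism of di-racks. All calculations reduce to the bridge identity plus one of left-distributivity, the rack-pairing compatibility, or bijectivity of a left translation, so no lengthy casework is needed.
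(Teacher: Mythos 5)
Your proof is correct and follows essentially the same route as the paper's: both rest on the bridge identity $x \diamond (x \triangleright y) = x \smallblackdiamond y$ and obtain each di-rack axiom by combining the left-distributivity of $\diamond$ and $\smallblackdiamond$ with the rack-pairing compatibility, then cancelling an injective left translation $L^\diamond$. The only cosmetic difference is at the weighted identity (\ref{dir2}), which you first restate as $(x \smallblackdiamond y) \triangleright (x \smallblackdiamond z) = x \smallblackdiamond (y \triangleright z)$ and then verify, whereas the paper expands $x \smallblackdiamond (y \smallblackdiamond z)$ and $(x \smallblackdiamond y) \smallblackdiamond (x \smallblackdiamond z)$ separately and equates them; the underlying computation is identical.
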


\begin{proof}
    Let $x, y \in Q$ be arbitrary. Since $\smallblackdiamond$ defines a rack operation on $Q$, there exists a unique $c \in Q$ such that $x \smallblackdiamond c = x \diamond y$. In other words,
    \begin{align*}
        x \triangleright c = (L_x^\diamond)^{-1} (x \smallblackdiamond c) = (L_x^\diamond)^{-1} (x \diamond y) = y.
    \end{align*}
    This shows that the map $L_x^\triangleright: Q \rightarrow Q$ is bijective on $Q$. For any $x, y, z \in Q$, we have
    \begin{align*}
        x \triangleright (y \diamond z) =~& (L_x^\diamond)^{-1} \big(  x \smallblackdiamond (y \diamond z)   \big) \\
        =~& (L_x^\diamond)^{-1} \big(  (x \smallblackdiamond y) \diamond (x \smallblackdiamond z) \big) \\
        =~& \big(  (L_x^\diamond)^{-1} (x \smallblackdiamond y)  \big) \diamond \big(  (L_x^\diamond)^{-1} (x \smallblackdiamond z)  \big) = (x \triangleright y) \diamond (x \triangleright z).
    \end{align*}
    Hence the left multiplication map $L_x^\triangleright: Q \rightarrow Q$ is a rack automorphism for $(Q, \diamond)$. Next, we observe that
    \begin{align}\label{compare1}
        x \smallblackdiamond (y \smallblackdiamond z) =~& x \diamond \big(  x \triangleright ( y \diamond (y \triangleright z)) \big) \nonumber \\
        =~& x \diamond \big(    (x \triangleright y) \diamond ( x \triangleright (y \triangleright z)) \big) \quad (\because ~ L_x^\triangleright \text{ is a rack automorphism})  \nonumber \\
        =~& \big(  x \diamond (x \triangleright y)  \big) \diamond \big(  x \diamond (x \triangleright (y \triangleright z))   \big).
    \end{align}
    On the other hand,
    \begin{align}\label{compare2}
        (x \smallblackdiamond y) \smallblackdiamond (x \smallblackdiamond z) =~& (x \diamond (x \triangleright y))  \smallblackdiamond (x \diamond (x \triangleright z)) \nonumber \\
        =~& (x \diamond (x \triangleright y)) \diamond \big(  ( x \diamond (x \triangleright y) )\triangleright (x \diamond (x \triangleright z))   \big).
    \end{align}
    Since $\smallblackdiamond$ is left-distributive, it follows from (\ref{compare1}) and (\ref{compare2}) that 
    \begin{align*}
        \big(  x \diamond (x \triangleright y)  \big) \diamond \big(  x \diamond (x \triangleright (y \triangleright z))   \big) = (x \diamond (x \triangleright y)) \diamond \big(  ( x \diamond (x \triangleright y) )\triangleright (x \diamond (x \triangleright z))   \big).
    \end{align*}
    By applying the operation $\big( L_{ x \diamond (x \triangleright y)}   \big)^{-1}$ to both sides of the above identity, we obtain the weighted left-distributivity. This proves that $(Q, \diamond, \triangleright)$ is a di-rack.

    Finally, if $\varphi : Q \rightarrow Q'$ is a morphism of rack-pairings from $(Q, \diamond, \smallblackdiamond)$ to $(Q', \diamond', \smallblackdiamond')$ then we have
    \begin{align*}
        \varphi (x \triangleright y) = \varphi \big(  (L_x^\diamond)^{-1} (x \smallblackdiamond y)   \big) = (  L_{\varphi (x)}^{\diamond'})^{-1} ( \varphi (x) \smallblackdiamond' \varphi (y)) = \varphi (x) \triangleright' \varphi(y),
    \end{align*}
    for all $x, y \in Q$. Hence $\varphi$ is a morphism of di-racks from $(Q, \diamond, \triangleright)$ to $(Q', \diamond', \triangleright')$.
\end{proof}

Using the above construction, one may find many examples of di-racks some of which are listed below.

\begin{prop}
    Let $(G, \cdot, \smallblackdiamond)$ be a pointed group-rack. Then $(G, \diamond, \triangleright)$ is a di-rack, where
    \begin{align*}
        x \diamond y = x yx^{-1} ~~~ \text{ and } ~~~ x \triangleright y = x^{-1} (x \smallblackdiamond y) x, \text{ for } x, y \in G.
    \end{align*}
\end{prop}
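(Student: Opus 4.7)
The plan is to assemble this as a direct composition of two earlier results rather than verifying the di-rack axioms from scratch. First, I would invoke Proposition \ref{gpr-skr}, which tells us that any pointed group-rack $(G, \cdot, \smallblackdiamond)$ gives rise to a rack-pairing $(G, \diamond, \smallblackdiamond)$, where $x \diamond y = x y x^{-1}$ is the conjugation rack. Thus from the hypothesis we immediately have the rack-pairing $(G, \diamond, \smallblackdiamond)$ in hand.

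Next, I would apply Proposition \ref{rack-pair-dirack} to this rack-pairing. That proposition produces a di-rack $(G, \diamond, \triangleright)$ out of any rack-pairing by setting
\[
x \triangleright y := (L_x^\diamond)^{-1} (x \smallblackdiamond y), \quad \text{for } x, y \in G.
\]
So it remains only to identify this abstract formula with the explicit formula $x \triangleright y = x^{-1}(x \smallblackdiamond y) x$ stated in the proposition. This is a one-line observation: since $L_x^\diamond (z) = x z x^{-1}$ in the conjugation rack, the inverse of the left translation is $(L_x^\diamond)^{-1}(z) = x^{-1} z x$. Applying this to $z = x \smallblackdiamond y$ yields exactly $x \triangleright y = x^{-1}(x \smallblackdiamond y) x$.

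I do not anticipate any real obstacle here, since the proposition is essentially a concrete specialization of two prior constructions. The only thing to double-check is that the composition is legitimate: the pointed group-rack structure is exactly the hypothesis needed for Proposition \ref{gpr-skr} (and indeed the pointedness ensures that the distinguished element $e$ behaves as expected), and the rack-pairing output of that proposition is exactly the input required by Proposition \ref{rack-pair-dirack}. Hence the conclusion is immediate.
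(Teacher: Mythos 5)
Your proposal is correct and follows exactly the paper's own argument: invoke Proposition \ref{gpr-skr} to obtain the rack-pairing $(G, \diamond, \smallblackdiamond)$, then apply Proposition \ref{rack-pair-dirack} and observe that $(L_x^\diamond)^{-1}(z) = x^{-1} z x$ for the conjugation rack. No gaps.
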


\begin{proof}
    Since $(G, \cdot, \smallblackdiamond)$ is a pointed group-rack, it follows from Proposition \ref{gpr-skr} that $(G, \diamond, \smallblackdiamond)$ is a rack-pairing, where $x \diamond y = x y x^{-1}$. Hence by Proposition \ref{rack-pair-dirack} that $(G, \diamond, \triangleright)$ is a di-rack, where $x \triangleright y = (L_x^\diamond)^{-1} (x \smallblackdiamond y) = x^{-1} (x \smallblackdiamond y) x$, for $x, y \in G$.
\end{proof}

\begin{prop}
    Let $(Q, \diamond)$ be a rack and $\mathcal{A} : Q \rightarrow Q$ be an averaging operator on $Q$. Then $(Q, \diamond, \triangleright_\mathcal{A})$ is a di-rack, where the operation $\triangleright_\mathcal{A} : Q \times Q \rightarrow Q$ is implicitly defined by the identity
    \begin{align*}
        x \diamond (x \triangleright_\mathcal{A} y) := \mathcal{A} (x) \diamond y, \text{ for } x, y \in Q.
    \end{align*}
\end{prop}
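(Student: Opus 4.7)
The plan is simply to recognize that $\triangleright_{\mathcal{A}}$ is precisely the binary operation produced by the general construction of Proposition \ref{rack-pair-dirack} applied to the rack-pairing supplied by Proposition \ref{avg-rack-pair}. So the whole proof is a two-step reduction, with essentially no new computation needed.

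First I would check that $\triangleright_{\mathcal{A}}$ is well-defined. For each $x \in Q$, the left translation $L_x^\diamond = x \diamond -$ is a bijection of $Q$ (since $(Q, \diamond)$ is a rack), so given $y \in Q$ there is a unique element $c \in Q$ with $x \diamond c = \mathcal{A}(x) \diamond y$. Setting $x \triangleright_{\mathcal{A}} y := c = (L_x^\diamond)^{-1}(\mathcal{A}(x) \diamond y)$ makes the defining relation hold and determines the operation unambiguously.

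Next, by Proposition \ref{avg-rack-pair} the triple $(Q, \diamond, \diamond_{\mathcal{A}})$ is a rack-pairing, where $x \diamond_{\mathcal{A}} y = \mathcal{A}(x) \diamond y$. Applying Proposition \ref{rack-pair-dirack} to this rack-pairing, one obtains a di-rack structure $(Q, \diamond, \triangleright)$ with
\begin{align*}
x \triangleright y \;=\; (L_x^\diamond)^{-1}(x \diamond_{\mathcal{A}} y) \;=\; (L_x^\diamond)^{-1}\bigl(\mathcal{A}(x) \diamond y\bigr) \;=\; x \triangleright_{\mathcal{A}} y,
\end{align*}
so $\triangleright$ coincides with $\triangleright_{\mathcal{A}}$. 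Hence $(Q, \diamond, \triangleright_{\mathcal{A}})$ is a di-rack, and no further verification of the two di-rack axioms is necessary.

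Since every nontrivial content is already carried by Propositions \ref{avg-rack-pair} and \ref{rack-pair-dirack}, there is no serious obstacle; the only thing worth remarking is the well-definedness above, which ensures the implicit definition of $\triangleright_{\mathcal{A}}$ is legitimate before the chain of reductions is applied.
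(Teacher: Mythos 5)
Your proposal is correct and follows exactly the paper's own argument: the paper likewise deduces the result by combining Proposition \ref{avg-rack-pair} (to get the rack-pairing $(Q,\diamond,\diamond_\mathcal{A})$) with Proposition \ref{rack-pair-dirack} (to extract the di-rack with $x \triangleright_\mathcal{A} y = (L_x^\diamond)^{-1}(\mathcal{A}(x)\diamond y)$). Your extra remark on well-definedness of the implicitly defined operation is a harmless and reasonable addition.
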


\begin{proof}
    Since $\mathcal{A} : Q \rightarrow Q$ is an averaging operator on the rack $(Q, \diamond)$, it follows from Proposition \ref{avg-rack-pair} that the triple $(Q, \diamond, \diamond_\mathcal{A})$ is a rack-pairing, where $x \diamond_\mathcal{A} y := \mathcal{A}(x) \diamond y$, for $x, y \in Q$. Hence it follows from Proposition \ref{rack-pair-dirack} that $(Q, \diamond, \triangleright_\mathcal{A})$ is a di-rack, where $x \triangleright_\mathcal{A} y := (L_x^\diamond)^{-1} (x \diamond_\mathcal{A} y) = (L_x^\diamond)^{-1} (\mathcal{A} (x) \diamond y)$, for $x, y \in Q$. This proves the result.
\end{proof}

As a consequence, we get the following.

\begin{prop}
    Let $G$ be a group and $\mathcal{A}: G \rightarrow G$ be an averaging operator on $G$. Then $(G, \diamond, \triangleright_\mathcal{A})$ is a di-rack, where
    \begin{align*}
        x \diamond y := x y x^{-1} ~~~ \text{ and } ~~~ x \triangleright_\mathcal{A} y := x^{-1} \mathcal{A} (x) y \mathcal{A}(x)^{-1} x, \text{ for } x, y \in G.
    \end{align*}
\end{prop}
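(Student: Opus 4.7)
The plan is to deduce this statement as a direct corollary of the preceding proposition, specialized to the conjugation rack of the group $G$. So I do not need to verify the di-rack axioms from scratch; the main task is simply to unwind the implicit definition of $\triangleright_\mathcal{A}$ and recognize that it agrees with the explicit formula in the statement.

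First, I would recall that any group $G$ carries its conjugation rack structure, where $x \diamond y = xyx^{-1}$ for $x, y \in G$. By the remark immediately following the definition of an averaging operator on a rack, a map $\mathcal{A}: G \to G$ is an averaging operator on the group $G$ if and only if it is an averaging operator on the conjugation rack $(G, \diamond)$. Hence, from our hypothesis, $\mathcal{A}$ is an averaging operator on the rack $(G, \diamond)$.

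Next, I would apply the previous proposition to the averaging operator $\mathcal{A}$ on $(G, \diamond)$. That proposition gives that $(G, \diamond, \triangleright_\mathcal{A})$ is a di-rack, where $\triangleright_\mathcal{A}$ is defined by
\[
x \diamond (x \triangleright_\mathcal{A} y) = \mathcal{A}(x) \diamond y, \quad \text{for } x, y \in G.
\]
Substituting the conjugation rack operation on both sides, this becomes
\[
x \, (x \triangleright_\mathcal{A} y) \, x^{-1} = \mathcal{A}(x) \, y \, \mathcal{A}(x)^{-1}.
\]
Solving for $x \triangleright_\mathcal{A} y$ by multiplying on the left by $x^{-1}$ and on the right by $x$ yields exactly
\[
x \triangleright_\mathcal{A} y = x^{-1} \mathcal{A}(x) \, y \, \mathcal{A}(x)^{-1} x,
\]
which matches the formula in the statement.

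Since the di-rack axioms have already been verified in the preceding proposition for any averaging operator on any rack, no further work is needed. There is no real obstacle here: the content is entirely that conjugation racks are racks, that group-averaging and rack-averaging coincide for the conjugation rack, and that inverting $L_x^\diamond$ in the conjugation rack amounts to conjugation by $x^{-1}$.
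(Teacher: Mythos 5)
Your proposal is correct and follows exactly the route the paper intends: the paper states this proposition as a direct consequence of the preceding one (averaging operator on a rack yields the di-rack $(Q,\diamond,\triangleright_\mathcal{A})$), specialized to the conjugation rack, using the observation that group-averaging and conjugation-rack-averaging operators coincide. Your unwinding of the implicit definition, namely $(L_x^\diamond)^{-1}(\mathcal{A}(x)\diamond y)=x^{-1}\mathcal{A}(x)y\mathcal{A}(x)^{-1}x$, correctly recovers the explicit formula.
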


\section{Averaging operators on cocommutative Hopf algebras and braided vector spaces}\label{sec5}

Till now, we have considered averaging operators on (Lie) groups and racks. Since groups are closely related to cocommutative Hopf algebras, we may also define averaging operators on a cocommutative Hopf algebra $H$ (see the next definition). Then we show that there is a one-to-one correspondence between averaging operators on a group $G$ and averaging operators on $H= {\bf k}[G]$.

Let $(H, \mu, \eta, \Delta, \epsilon, S)$ be a cocommutative Hopf algebra. 

\begin{defn}
    An {\bf averaging operator} on $H$ is a coalgebra map $B: H \rightarrow H$ that satisfies
    \begin{align}\label{avg-coc}
        B(x_{(1)}) B(y) S (    B (x_{(2)})) = B \big(  B(x_{(1)}) y S (    B (x_{(2)}))   \big), \text{ for all } x, y \in H.
    \end{align}
\end{defn}

\begin{exam}
    The identity map $\mathrm{Id} : H \rightarrow H$ is an averaging operator on $H$.
\end{exam}

\begin{exam}
    If $H$ is involutive (i.e. the antipode $S$ satisfies $S^2 = \mathrm{Id}$) and $S$ is also a coalgebra map then $S$ is an averaging operator on $H$. 
    %Note that a finite-dimensional semisimple Hopf algebra that is also cocommutative is involutive. Hence the antipode is an averaging operator.
\end{exam}

\begin{exam}
    Let $G$ be a finite abelian group and let $H = {\bf k}^G$ be the Hopf algebra of functions from $G$ to ${\bf k}$. Then any coalgebra map $B : H \rightarrow H$ is an averaging operator on $H$. 
\end{exam}

\begin{thm}
Let $G$ be a group and $\mathcal{A} : G \rightarrow G$ be an averaging operator on $G$. Then the map $\mathcal{A}$ can be uniquely extended to an averaging operator $B: {\bf k}[G] \rightarrow {\bf k}[G]$ on the group algebra ${\bf k}[G]$ viewed as a cocommutative Hopf algebra. Conversely, for any group $G$, if $B: {\bf k}[G] \rightarrow {\bf k}[G]$ is an averaging operator on the group algebra ${\bf k}[G]$, then $ \mathcal{A} = B|_G$ is an averaging operator on the group $G$.
\end{thm}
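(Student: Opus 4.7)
The plan is to extend $\mathcal{A}$ by linearity and reduce the Hopf averaging identity (\ref{avg-coc}) to the group averaging identity (\ref{avg-ident}) by evaluating on basis elements; the converse then follows essentially by restriction to group-like elements.

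For the forward direction, I would define $B : {\bf k}[G] \to {\bf k}[G]$ as the unique linear map with $B(g) := \mathcal{A}(g)$ for all $g \in G$. Since each $\mathcal{A}(g) \in G$ is group-like in ${\bf k}[G]$, one has $\Delta B(g) = \mathcal{A}(g) \otimes \mathcal{A}(g) = (B \otimes B)\Delta(g)$ and $\varepsilon B(g) = 1 = \varepsilon(g)$; extending by linearity shows that $B$ is a coalgebra map. To verify (\ref{avg-coc}), observe that both sides are bilinear in $x$ and $y$ (using linearity of $\Delta$, $B$, $S$ and the product), so it suffices to check the identity for $x = g,\; y = h$ with $g, h \in G$. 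Using $\Delta(g) = g \otimes g$ and $S(\mathcal{A}(g)) = \mathcal{A}(g)^{-1}$, the left-hand side of (\ref{avg-coc}) simplifies to $\mathcal{A}(g)\mathcal{A}(h)\mathcal{A}(g)^{-1}$ while the right-hand side simplifies to $\mathcal{A}\bigl(\mathcal{A}(g) h \mathcal{A}(g)^{-1}\bigr)$, and these agree by (\ref{avg-ident}). Uniqueness of $B$ is immediate since $G$ is a basis of ${\bf k}[G]$.

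For the converse, suppose $B : {\bf k}[G] \to {\bf k}[G]$ is a Hopf algebra averaging operator. Since $B$ is a coalgebra map, it sends group-like elements to group-like elements; and a standard fact about group algebras is that the set of group-like elements of ${\bf k}[G]$ coincides with $G$. Hence $\mathcal{A} := B|_G$ is a well-defined set map $G \to G$. Evaluating (\ref{avg-coc}) at $x = g$, $y = h \in G$ with $\Delta(g) = g \otimes g$ and $S(B(g)) = \mathcal{A}(g)^{-1}$ recovers exactly (\ref{avg-ident}), so $\mathcal{A}$ is an averaging operator on $G$.

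The proof is largely bookkeeping; the only genuine content is (i) the identification of the group-like elements of ${\bf k}[G]$ with $G$, which makes the restriction $B|_G$ well-defined, and (ii) the observation that on a group-like element $g$, the Sweedler expansion $x_{(1)} \otimes x_{(2)} = g \otimes g$ together with $S(g) = g^{-1}$ collapses the Hopf-algebraic averaging identity (\ref{avg-coc}) to the group-theoretic identity (\ref{avg-ident}). The one place to be careful is verifying that both sides of (\ref{avg-coc}) are bilinear in $x$ and $y$, so that checking on basis elements is sufficient; this is transparent after writing out the Sweedler notation.
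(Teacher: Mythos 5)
Your proposal is correct and follows essentially the same route as the paper: extend $\mathcal{A}$ linearly to a coalgebra map $B$, reduce the Hopf-algebraic identity (\ref{avg-coc}) to the group identity (\ref{avg-ident}) on the basis $G$ using $\Delta(g)=g\otimes g$ and $S(g)=g^{-1}$, and for the converse use that a coalgebra map preserves group-like elements (the paper makes explicit the step $\epsilon(B(g))=1$, hence $B(g)\neq 0$, which your appeal to the standard identification of group-likes of ${\bf k}[G]$ with $G$ subsumes). No gaps.
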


\begin{proof}
    We define the map $B : {\bf k}[G] \rightarrow {\bf k}[G]$ by
    \begin{align*}
        B (\sum \alpha_i g_i) = \sum \alpha_i \mathcal{A} (g_i), \text{ for } \sum \alpha_i g_i \in {\bf k}[G].
    \end{align*}
    The map $B: {\bf k}[G] \rightarrow {\bf k}[G]$ is a coalgebra map. Finally, for any $x = \sum \alpha_i g_i \in {\bf k}[G]$ and $y \in G$, we have
    \begin{align*}
        B (x_{(1)}) B(y) S (B (x_{(2)})) =~& \alpha_i B (g_i) B(y) S (B (g_i)) \\
        =~& \alpha_i \mathcal{A} (g_i) \mathcal{A}(y) \mathcal{A} (g_i)^{-1} \\
        =~& \alpha_i \mathcal{A} \big(  \mathcal{A} (g_i) y \mathcal{A} (g_i)^{-1} \big) \\
        =~& \alpha_i B \big(   B(g_i) y S (B(g_i)) \big) = B \big(  B(x_{(1)}) y S (B (x_{(2)})) \big).
    \end{align*}
Since the elements of $G$ form a linear basis for the vector space ${\bf k}[G]$, it follows that the identity (\ref{avg-coc}) holds for all $x, y \in {\bf k}[G]$.

Conversely, let $G$ be a group and $B: {\bf k}[G] \rightarrow {\bf k}[G]$ be an averaging operator on the group algebra ${\bf k}[G]$. Let $g \in G$. Since $B : {\bf k}[G] \rightarrow {\bf k}[G]$ is a coalgebra map, we have
\begin{align}\label{coalg-map}
    \Delta (B(g)) = (B \otimes B) \Delta (g) = B (g) \otimes B(g).
\end{align}
On the other hand, since $\epsilon (B(g)) = \epsilon (g) = 1$, we have $B(g) \neq 0$. Hence it follows from (\ref{coalg-map}) that $B(g)$ is a group-like element.
In other words, $B(g) \in G$. Finally, for any $g, h \in G$, we have
\begin{align*}
    \mathcal{A} (g) \mathcal{A} (h) \mathcal{A} (g)^{-1} = B(g) B(h) S (B(g)) = B \big(  B(g) h S (B(g)) \big) = \mathcal{A} \big( \mathcal{A} (g) h \mathcal{A}(g)^{-1} \big).
\end{align*}
This shows that $\mathcal{A} = B|_G$ is an averaging operator on the group $G$. 
\end{proof}

On the other hand, groups and racks are also related to set-theoretical solutions of the Yang-Baxter equation. Recall that a set-theoretical solution of the Yang-Baxter equation gives rise to a braided vector space \cite{etingof}. Here we will define averaging operators on braided vector spaces and braided sets and find their relations with averaging operators on Lie algebras, groups and racks.

A {\bf braided vector space} is a pair $(V, S)$ consisting of a vector space $V$ equipped with a linear map $S: V \otimes V \rightarrow V \otimes V$ satisfying the following Yang-Baxter equation (or braided equation):
\begin{align}\label{yb-eqn}
    (\mathrm{Id} \otimes S) (S \otimes \mathrm{Id}) (\mathrm{Id} \otimes S)  =   (S \otimes \mathrm{Id})  (\mathrm{Id} \otimes S) (S \otimes \mathrm{Id}).
\end{align}
Note that if $X$ is a basis of the vector space $V$ and $r : X \times X \rightarrow X \times X$ is a map such that
%A {\bf set-theoretical solution of the Yang-Baxter equation} on $X$ is a set-map $S: X \times X \rightarrow X \times X$ satisfying
\begin{align}
    (\mathrm{Id} \times r) (r \times \mathrm{Id}) (\mathrm{Id} \times r)  =   (r \times \mathrm{Id})  (\mathrm{Id} \times r) (r \times \mathrm{Id})
\end{align}
holds, then $r$ induces a solution of the Yang-Baxter equation. In this case, $r$ is called a {\bf set-theoretical solution of the Yang-Baxter equation} on $X$ and the pair $(X, r)$ is called a {\bf braided set}.

%Finding new solutions of the Yang-Baxter equation and the set-theoretical solutions of the Yang-Baxter equation is a central open problem in mathematics and mathematical physics.

In the following, we introduce averaging operators on braided vector spaces and braided sets. We show that they are closely related to averaging operators on Lie algebras and racks.

\begin{defn}
(i) Let $(V, S)$ be a braided vector space. A {\bf (left) averaging operator} on $(V, S)$ is a linear map $\mathcal{A} : V \rightarrow V$ satisfying
\begin{align}\label{braided-avg-1}
    S (\mathcal{A}(x) \otimes \mathcal{A} (y) ) = (\mathcal{A} \otimes \mathrm{Id}) S ( \mathcal{A} (x) \otimes  y), \text{ for } x, y \in V.
\end{align}

(ii) Let $(X, r)$ be a braided set. A {\bf (left) averaging operator} on $(X,r)$ is a set-map $\mathcal{A}: X \rightarrow X$ that satisfies
\begin{align}\label{braided-avg}
    r (\mathcal{A}(x) , \mathcal{A} (y) ) = (\mathcal{A} \times \mathrm{Id}) r ( \mathcal{A} (x) ,  y), \text{ for } x, y \in X.
\end{align}
\end{defn}

\begin{remark}
    It follows from (\ref{braided-avg}) that if $\mathcal{A}: X \rightarrow X$ is an averaging operator on the braided set $(X, r)$ then for any $k \geq 0$, the map $\mathcal{A}^k: X \rightarrow X$ is also an averaging operator on $(X, r)$.
\end{remark}

In \cite{baez-crans} the authors have shown that Lie algebras are closely related to braided vector spaces. Let $\mathfrak{g}$ be a vector space equipped with a skew-symmetric bilinear bracket $[~, ~] : \mathfrak{g} \times \mathfrak{g} \rightarrow \mathfrak{g}$. Consider the vector space $V = {\bf k} \oplus \mathfrak{g}$ and define a linear map $S: V \otimes V \rightarrow V \otimes V$ given by
\begin{align}\label{lie-yb}
    S \big(  (a, x) \otimes (b, y) \big) = (b, y) \otimes (a, x) + (0, [x, y]) \otimes (1, 0),
\end{align}
for $(a, x), (b, y) \in {\bf k} \oplus \mathfrak{g} = V$. Then $S$ satisfies the Yang-Baxter equation (\ref{yb-eqn}) if and only if $(\mathfrak{g}, [~,~])$ is a Lie algebra.

\begin{prop}
    Let $(\mathfrak{g}, [~,~])$ be a Lie algebra and consider the corresponding braided vector space $({\bf k} \oplus \mathfrak{g}, S)$, where $S$ is given by (\ref{lie-yb}). A linear map $P: \mathfrak{g} \rightarrow \mathfrak{g}$ is an averaging operator on the Lie algebra $\mathfrak{g}$ if and only if the linear map ${\mathcal{A}}: {\bf k} \oplus \mathfrak{g} \rightarrow {\bf k} \oplus \mathfrak{g}$ defined by ${\mathcal{A}} (a, x) = (a, P(x))$ is an averaging operator on the braided vector space $({\bf k} \oplus \mathfrak{g}, S)$.
\end{prop}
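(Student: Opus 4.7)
The plan is a direct computation: expand both sides of the braided averaging identity \eqref{braided-avg-1} for $\mathcal{A}(a,x) = (a, P(x))$ and the map $S$ defined by \eqref{lie-yb}, and observe that the resulting equation on $\mathbf{k} \oplus \mathfrak{g}$ holds if and only if the averaging identity $[P(x),P(y)] = P[P(x),y]$ holds in $\mathfrak{g}$.

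More concretely, I would first compute the left-hand side of \eqref{braided-avg-1} applied to $(a,x) \otimes (b,y)$. Using $\mathcal{A}(a,x) = (a, P(x))$ and the definition of $S$, this is
\[
S\bigl((a,P(x)) \otimes (b,P(y))\bigr) = (b,P(y)) \otimes (a,P(x)) + (0,[P(x),P(y)]) \otimes (1,0).
\]
Next I would compute the right-hand side. Applying $S$ to $\mathcal{A}(a,x) \otimes (b,y) = (a,P(x)) \otimes (b,y)$ gives
\[
(b,y) \otimes (a,P(x)) + (0,[P(x),y]) \otimes (1,0),
\]
and then applying $\mathcal{A} \otimes \mathrm{Id}$ yields
\[
(b,P(y)) \otimes (a,P(x)) + (0, P[P(x),y]) \otimes (1,0).
\]

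Comparing the two expressions, the ``swap'' terms agree automatically, so \eqref{braided-avg-1} reduces to the single condition
\[
(0,[P(x),P(y)]) \otimes (1,0) = (0, P[P(x),y]) \otimes (1,0)
\]
in $V \otimes V$, which (since $(1,0) \neq 0$) is equivalent to $[P(x),P(y)] = P[P(x),y]$ for all $x,y \in \mathfrak{g}$. As $a,b$ played no role in the final identity, the equivalence holds on all of $V \otimes V$ simultaneously, establishing both implications at once. There is no genuine obstacle here; the only thing to be careful about is that the scalar components $a,b$ drop out cleanly, which they do because $\mathcal{A}$ acts as the identity on the $\mathbf{k}$-summand and $S$ preserves this component in the swap term.
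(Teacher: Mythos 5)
Your computation is correct and is essentially identical to the paper's own proof: both sides of \eqref{braided-avg-1} are expanded using \eqref{lie-yb}, the swap terms match, and the identity reduces to $[P(x),P(y)] = P[P(x),y]$ paired against the nonzero vector $(1,0)$. Your extra remark about why the scalar components drop out is a nice touch but the argument is the same.
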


\begin{proof}
    We have
    \begin{align*}
        S \big( {\mathcal{A}} (a, x) \otimes {\mathcal{A}} (b, y)   \big) =~& S \big( (a, P(x)) \otimes (b, P (y))   \big) \\
        =~&  (b, P (y)) \otimes  (a, P (x)) + (0, [ P(x), P(y)]) \otimes (1, 0).
    \end{align*}
    On the other hand,
    \begin{align*}
        ({\mathcal{A}} \otimes \mathrm{Id}) S \big( {\mathcal{A}} (a, x) \otimes (b, y)  \big) =~& ({\mathcal{A}} \otimes \mathrm{Id}) S \big(  (a, P(x)) \otimes (b, y)   \big) \\
        =~& (b, P (y)) \otimes  (a, P (x)) + (0, P [ P(x), y]) \otimes (1, 0).
    \end{align*}
    Hence the result follows by comparing the above two expressions.
\end{proof}

Let $G$ be a group. Then the map $r : G \times G \rightarrow G \times G$ defined by $r (x, y) = (xyx^{-1}, x)$ is a set-theoretical solution of the Yang-Baxter equation on $G$. Then a map $\mathcal{A} : G \rightarrow G$ is an averaging operator on the group $G$ if and only if $\mathcal{A}$ is an averaging operator on the braided set $(G, r)$. This result can be generalized to the context of racks as follows.

Let $Q$ be a nonempty set equipped with a binary operation $\diamond : Q \times Q \rightarrow Q$. Suppose for each $x \in Q$, the left multiplication map $L_x^\diamond = x \diamond - : Q \rightarrow Q$ is invertible. We define a map $r: Q \times Q \rightarrow Q \times Q$ by $r (x, y) = (x \diamond y, x)$, for $x, y \in Q$. Then $r$ is a set-theoretical solution of the Yang-Baxter equation on $Q$ (in other words, $(Q, r)$ is a braided set) if and only if $(Q, \diamond)$ is a rack, i.e. $\diamond$ satisfies the left-distributivity.

\begin{prop}
    Let $(Q, \diamond)$ be a rack and consider the corresponding braided set $(Q, r)$, where $r(x, y) = (x \diamond y, x)$, for $x, y \in Q$. A map $\mathcal{A}: Q \rightarrow Q$ is an averaging operator on the rack $Q$ if and only if $\mathcal{A}$ is an averaging operator on the braided set $(Q, r)$.
\end{prop}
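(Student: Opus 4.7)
The plan is to prove both directions simultaneously by simply expanding the braided averaging identity $r(\mathcal{A}(x), \mathcal{A}(y)) = (\mathcal{A} \times \mathrm{Id})\, r(\mathcal{A}(x), y)$ using the explicit formula $r(x,y) = (x \diamond y, x)$. Everything reduces to comparing the two sides componentwise, and no obstacle of any depth is expected; this is essentially a formal unpacking.

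First I would fix $x, y \in Q$ and compute the left-hand side:
\begin{align*}
r(\mathcal{A}(x), \mathcal{A}(y)) = \big( \mathcal{A}(x) \diamond \mathcal{A}(y),\, \mathcal{A}(x) \big).
\end{align*}
Then I would compute the right-hand side:
\begin{align*}
(\mathcal{A} \times \mathrm{Id})\, r(\mathcal{A}(x), y) = (\mathcal{A} \times \mathrm{Id}) \big( \mathcal{A}(x) \diamond y,\, \mathcal{A}(x) \big) = \big( \mathcal{A}(\mathcal{A}(x) \diamond y),\, \mathcal{A}(x) \big).
\end{align*}
The second components agree automatically. The equality of first components, namely $\mathcal{A}(x) \diamond \mathcal{A}(y) = \mathcal{A}(\mathcal{A}(x) \diamond y)$, is precisely the defining identity of an averaging operator on the rack $(Q, \diamond)$.

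Hence the two conditions are equivalent, and this yields both implications at once: if $\mathcal{A}$ is a rack averaging operator then the two tuples coincide (so $\mathcal{A}$ is a braided-set averaging operator), and conversely if the tuples coincide for all $x, y$ then projecting onto the first coordinate gives the rack averaging identity. The only thing to be slightly careful about is that we need to know $(Q, r)$ actually is a braided set, but this was established in the paragraph immediately preceding the statement (left-distributivity of $\diamond$ is equivalent to $r$ satisfying the Yang-Baxter equation), so it can be invoked directly. No substantive obstacle arises.
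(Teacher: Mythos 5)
Your proposal is correct and is essentially identical to the paper's own proof: both expand $r(\mathcal{A}(x),\mathcal{A}(y))$ and $(\mathcal{A}\times\mathrm{Id})\,r(\mathcal{A}(x),y)$ explicitly and observe that equality of the first components is exactly the rack averaging identity. Nothing further is needed.
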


\begin{proof}
    For any $x, y \in Q$, we observe that
    \begin{align*}
        r (\mathcal{A} (x), \mathcal{A} (y)) = \big( \mathcal{A} (x) \diamond \mathcal{A}(y), \mathcal{A}(x) \big) \quad \text{ and } \quad (\mathcal{A} \times \mathrm{Id}) r (\mathcal{A}(x), y) = \big(    \mathcal{A} ( \mathcal{A} (x) \diamond y) , \mathcal{A} (x) \big).
    \end{align*}
    It follows that $\mathcal{A}$ is an averaging operator on the rack $Q$ if and only if it is an averaging operator on the braided set $(Q, r)$.
\end{proof}

In \cite{guar} Guarnieri and Vendramin have shown that a skew brace (in particular, a two-sided skew brace) gives rise to a set-theoretical solution of the Yang-Baxter equation. In their approach, they have shown that a skew brace yields a braided group and hence provides a set-theoretical solution using a result of \cite{etingof}. Recall that a braided group is nothing but a matched pair of groups satisfying an extra assumption. In a future project, we first aim to define the notion of a matched pair of racks and the concept of braided racks. Since rack-pairings are related to two-sided skew braces via conjugation, we expect that a rack-pairing may yield a braided rack which could give a set-theoretical solution of the Yang-Baxter equation.

\section{Di-Leibniz algebras and averaging operators on Leibniz algebras}\label{sec6}
In this section, we first introduce the notion of a di-Leibniz algebra and provide some examples. Next, we introduce averaging operators and relative averaging operators on Leibniz algebras. We show that an averaging operator or a relative averaging operator induces a di-Leibniz algebra. Finally, we show that any di-Leibniz algebra can be embedded into an averaging Leibniz algebra.

\begin{defn}
    A {\bf Leibniz algebra} is a pair $(\mathfrak{h}, \{ ~, ~ \})$ consisting of a vector space $\mathfrak{h}$ equipped with a bilinear bracket (called the Leibniz bracket) $\{ ~, ~ \}: \mathfrak{h} \times \mathfrak{h} \rightarrow \mathfrak{h}$ satisfying the following Leibniz identity:
    \begin{align}\label{leib-id}
        \{ x, \{ y, z \} \} = \{ \{ x, y \}, z \} + \{ y, \{x, z \}\}, \text{ for } x, y , z \in \mathfrak{h}.
    \end{align}
    We often denote a Leibniz algebra as above simply by $\mathfrak{h}$ when the Leibniz bracket is clear from the context. 
\end{defn}

A Leibniz algebra as above is called a left Leibniz algebra as the identity (\ref{leib-id}) is equivalent to the fact that the left multiplications $\{ x , - \} : \mathfrak{h} \rightarrow \mathfrak{h}$ are derivations for the bracket. Similarly, one can define right Leibniz algebra as a pair $(\mathfrak{h}, \{ ~, ~ \})$ of a vector space $\mathfrak{h}$ with a bilinear bracket such that the right multiplications $\{ - , x \}$ are derivations for the bracket. There is a one-to-one correspondence between left and right Leibniz algebras. More precisely, a pair $(\mathfrak{h}, \{ ~, ~ \})$ is a left Leibniz algebra if and only if $(\mathfrak{h}, \{ ~, ~ \}^\mathrm{op})$ is a right Leibniz algebra, where $\{ x, y \}^\mathrm{op} = \{ y, x \}$, for $x, y \in \mathfrak{h}$. In general, any results about left Leibniz algebras can be easily generalized to right Leibniz algebras without much work.

\begin{defn}
    Let $(\mathfrak{h}, \{ ~, ~ \})$ be a Leibniz algebra. A {\bf representation} of this Leibniz algebra consists of a triple $(V, \rho^L, \rho^R)$ in which $V$ is a vector space and $\rho^L : \mathfrak{h} \times V \rightarrow V$, $\rho^R : V \times \mathfrak{h} \rightarrow V$ are bilinear maps (called the left and right $\mathfrak{h}$-actions, respectively) satisfying for $x, y \in \mathfrak{h}$ and $v \in V$,
    \begin{align*}
        \rho^L (x, \rho^L (y, v)) =~& \rho^L ( \{ x, y \}, v) + \rho^L (y, \rho^L (x, v)),\\
        \rho^L (x, \rho^R (v, y)) =~& \rho^R (\rho^L (x, v), y) + \rho^R (v, \{ x, y \}), \\
        \rho^R (v, \{ x, y \}) =~& \rho^R (\rho^R (v, x), y) + \rho^L (x, \rho^R (v,y)).
    \end{align*}
\end{defn}

Let $(\mathfrak{h}, \{ ~, ~ \})$ be a Leibniz algebra. Then $(\mathfrak{h}, \rho^L, \rho^R)$ is a representation, where both the left and right $\mathfrak{h}$-actions are given by the Leibniz bracket, i.e. $\rho^L (x, y) = \rho^R (x, y) = \{ x, y \}$, for $x, y \in \mathfrak{h}$. This is called the {\bf adjoint representation}.

In \cite{loday-di} Loday introduced the notion of a diassociative algebra (also called dialgebra) in the study of Leibniz algebras. He has shown that dissociative algebras are closely related to the combinatorics of planar binary trees. A diassociative algebra is a vector space equipped with two bilinear operations satisfying five associative-style identities. Inspired by his work, we now introduce the notion of a di-Leibniz algebra.

%Similar to a diassociative algebra, a di-Leibniz algebra is given by two binary operations satisfying five Leibniz-style identities.

\begin{defn}
    A {\bf (left) di-Leibniz algebra} is a triple $(\mathfrak{d}, \{ ~, ~ \}_\dashv , \{ ~, ~ \}_{\vdash})$ consisting of a vector space $\mathfrak{d}$ with two bilinear maps $\{ ~, ~ \}_\dashv , \{ ~, ~ \}_{\vdash} : \mathfrak{d} \times \mathfrak{d} \rightarrow \mathfrak{d}$ satisfying the following set of identities
    \begin{align}
        \{ x , \{ y, z \}_\dashv \}_\dashv =~& \{ \{ x, y \}_\dashv , z \}_\dashv + \{ y, \{ x, z \}_\dashv \}_\vdash, \label{dl1}\\
        \{ x , \{ y, z \}_\vdash \}_\dashv =~& \{ \{ x, y \}_\dashv , z \}_\dashv + \{ y, \{ x, z \}_\dashv \}_\vdash, \label{dl2}\\
        \{ x , \{ y, z \}_\dashv \}_\vdash =~& \{ \{ x, y \}_\vdash , z \}_\dashv + \{ y, \{ x, z \}_\dashv \}_\dashv, \label{dl3}\\
        \{ x , \{ y, z \}_\vdash \}_\vdash =~& \{ \{ x, y \}_\dashv , z \}_\vdash + \{ y, \{ x, z \}_\vdash \}_\vdash, \label{dl4}\\
        \{ x , \{ y, z \}_\vdash \}_\vdash =~& \{ \{ x, y \}_\vdash , z \}_\vdash + \{ y, \{ x, z \}_\vdash \}_\vdash, \label{dl5}
    \end{align}
    for $x, y, z \in \mathfrak{d}$. A di-Leibniz algebra as above may be simply denoted by $\mathfrak{d}$ when the structure maps are clear from the context.
\end{defn}

Let  $(\mathfrak{d}, \{ ~, ~ \}_\dashv , \{ ~, ~ \}_{\vdash})$ be a di-Leibniz algebra. In presence of the identity (\ref{dl1}), the identity (\ref{dl2}) is equivalent to $  \{ x , \{ y, z \}_\dashv \}_\dashv =  \{ x , \{ y, z \}_\vdash \}_\dashv$. Using this, the identity (\ref{dl3}) can be equivalently written as
\begin{align*}
    \{ x, \{ y, z \}_\dashv \}_\vdash = \{ \{ x, y \}_\vdash , z \}_\dashv + \{ y, \{ x, z \}_\vdash \}_\dashv.
\end{align*}
On the other hand, it follows from (\ref{dl4}) and (\ref{dl5}) that $\{ \{ x, y \}_\dashv , z \}_\vdash = \{ \{ x, y \}_\vdash, z \}_\vdash$.

It is also important to note that the identity (\ref{dl5}) says that the bracket $\{ ~, ~ \}_\vdash$ is a Leibniz bracket on the vector space $\mathfrak{d}$. However, in general, $\{ ~, ~ \}_\dashv$ need not be a Leibniz bracket.

%\begin{remark}
%    Let $(\mathfrak{d}, \{ ~, ~ \}_\dashv, \{ ~, ~ \}_\vdash)$ be a di-Leibniz algebra. Then it follows from (dL1) and (dL2) that
%    \begin{align*}
%        \{ x, \{ y, z \}_\dashv \}_\dashv = \{ x, \{ y, z \}_\vdash \}_\dashv.
%    \end{align*}
%\end{remark}

\begin{exam}
    Any Leibniz algebra $(\mathfrak{h}, \{ ~, ~ \})$ can be regarded as a di-Leibniz algebra in which $\{ ~, ~ \}_\dashv = \{ ~, ~ \}_\vdash = \{ ~, ~ \}$. Conversely, a di-Leibniz algebra in which both the operations are the same is nothing but a Leibniz algebra.
\end{exam}

\begin{exam}
    A differential Leibniz algebra is a Leibniz algebra $(\mathfrak{h}, \{ ~, ~\})$ equipped with a linear map $d : \mathfrak{h} \rightarrow \mathfrak{h}$ satisfying
    \begin{align*}
        d^2 = 0  \quad \text{ and } \quad d\{ x, y \} = \{ dx , y \} + \{ x, dy \}, \text{ for } x, y \in \mathfrak{h}.
    \end{align*}
    Then it is easy to verify that $(\mathfrak{h}, \{~, ~ \}_\dashv , \{ ~, ~ \}_\vdash)$ is a di-Leibniz algebra, where $\{ x, y \}_\dashv = \{ x, dy \}$ and $\{ x, y \}_\vdash = \{ dx, y \}$, for $x, y \in \mathfrak{h}$.
\end{exam}

\begin{exam}\label{bimodule-map}
Let $(\mathfrak{h}, \{ ~, ~\})$ be a Leibniz algebra and $(V, \rho^L, \rho^R)$ be a representation of it. Suppose $f : V \rightarrow \mathfrak{h}$ is a morphism between $\mathfrak{h}$-representations (from $V$ to the adjoint representation $\mathfrak{h}$), i.e. $f (\rho^L (x, v) ) = \{ x, f(v) \}$ and $f (\rho^R (v, x)) = \{ f(v), x \}$, for all $x \in \mathfrak{g}$ and $v \in V$. Then there is a di-Leibniz algebra structure on $V$ with the operations $\{ u, v \}_\dashv = \rho^R (u, f(v))$ and $\{ u, v \}_\vdash = \rho^L (f (u), v)$, for $u, v \in V$.    
\end{exam}

\begin{exam}
   Let $(\mathfrak{h}, \{ ~, ~ \})$ be a Leibniz algebra. Then the direct sum $\underbrace{\mathfrak{h} \oplus \cdots \oplus \mathfrak{h}}_{n \text{ copies}}$ carries a di-Leibniz algebra structure with the operations
    \begin{align*}
        \{   (x_1, \ldots, x_n ) , (y_1, \ldots, y_n ) \}_\dashv =~& \big(  \{ x_1, y_1 + \cdots + y_n \}, \ldots, \{ x_i, y_1 + \cdots + y_n \}  , \ldots, \{ x_n, y_1 + \cdots + y_n \} \big),\\
         \{   (x_1, \ldots, x_n ) , (y_1, \ldots, y_n ) \}_\vdash =~& \big( \{ x_1+ \cdots + x_n, y_1 \}, \ldots,   \{ x_1+ \cdots + x_n, y_i \}, \ldots,  \{ x_1+ \cdots + x_n, y_n \}  \big).
    \end{align*}
\end{exam}

Another class of examples arise from representations of a Leibniz algebra as given in the next result.

\begin{prop}
    Let $(\mathfrak{h}, \{ ~, ~ \})$ be a Leibniz algebra and $(V, \rho^L, \rho^R)$ be a representation of it. Then the triple $(\mathfrak{h} \oplus V, \{ ~, ~\}_\dashv , \{ ~, ~\}_\vdash)$ is a di-Leibniz algebra, where for $(x, u), (y, v) \in \mathfrak{h} \oplus V$,
    \begin{align*}
        \{ (x, u), (y, v) \}_\dashv := (\{ x,  y \}, \rho^R (u, y)) ~~~~ \text{ and } ~~~~ \{ (x, u), (y, v) \}_\vdash := (\{x, y \}, \rho^L (x, v)).
    \end{align*}
\end{prop}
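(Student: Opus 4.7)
The plan is to verify each of the five di-Leibniz identities \eqref{dl1}--\eqref{dl5} by a direct componentwise computation on arbitrary elements $(x,u), (y,v), (z,w) \in \mathfrak{h} \oplus V$. By definition of the two brackets, namely $\{(x,u),(y,v)\}_\dashv = (\{x,y\}, \rho^R(u,y))$ and $\{(x,u),(y,v)\}_\vdash = (\{x,y\}, \rho^L(x,v))$, each side of every identity splits into an $\mathfrak{h}$-component and a $V$-component, so it suffices to verify each component separately.

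On the $\mathfrak{h}$-component, both brackets agree with the original Leibniz bracket $\{~,~\}$ of $\mathfrak{h}$. Consequently, regardless of which of the two operations sits on the outside, the first coordinate of each identity collapses to
\[
\{x, \{y, z\}\} = \{\{x,y\}, z\} + \{y, \{x,z\}\},
\]
which is the Leibniz identity for $\mathfrak{h}$ and hence holds by hypothesis.

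On the $V$-component, the three representation axioms enter in a forced pattern. After expanding the brackets, identities \eqref{dl1} and \eqref{dl2} both reduce to the third representation axiom
\[
\rho^R(u, \{y,z\}) = \rho^R(\rho^R(u,y), z) + \rho^L(y, \rho^R(u,z));
\]
identity \eqref{dl3} reduces to the second axiom
\[
\rho^L(x, \rho^R(v,z)) = \rho^R(\rho^L(x,v), z) + \rho^R(v, \{x,z\});
\]
and identities \eqref{dl4} and \eqref{dl5} reduce to the first axiom
\[
\rho^L(x, \rho^L(y,w)) = \rho^L(\{x,y\}, w) + \rho^L(y, \rho^L(x,w)).
\]
Which axiom appears in which identity is dictated purely by the combination of $\rho^L$ and $\rho^R$ produced when one unpacks the outer and inner brackets.

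There is no serious obstacle to the proof: the verification is a mechanical unpacking of definitions, and the matching between the five di-Leibniz axioms and the three representation axioms is forced. In effect, this proposition exhibits a semidirect-product construction of a di-Leibniz algebra associated to a representation of a Leibniz algebra, directly analogous to the construction described in Example~\ref{bimodule-map}.
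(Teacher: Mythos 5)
Your proposal is correct and follows essentially the same route as the paper: a direct componentwise verification in which the $\mathfrak{h}$-component of every identity reduces to the Leibniz identity and the $V$-components of (\ref{dl1})--(\ref{dl2}), (\ref{dl3}), and (\ref{dl4})--(\ref{dl5}) reduce to the third, second, and first representation axioms respectively, exactly as in the paper's computation. The only cosmetic difference is that the paper carries both coordinates through each display at once rather than separating them at the outset.
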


\begin{proof}
    For any $(x, u), (y, v), (z, w) \in \mathfrak{h} \oplus V$, we have
    \begin{align}\label{dl11}
         \{ (x, u) , \{ (y, v), (z, w) \}_\dashv \}_\dashv =~& \big(  \{ x, \{ y, z \} \}, \rho^R (u, \{ y, z \})  \big) \\
         =~& \big(  \{ \{ x, y \}, z \} + \{ y, \{ x, z \} \}, \rho^R (\rho^R (u, y), z) + \rho^L (y, \rho^R (u, z))  \big) \nonumber \\
         =~& \big(  \{ \{ x, y \} , z \} , \rho^R (\rho^R (u, y), z)  \big) +  \big( \{ y, \{ x, z \} \} + \rho^L (y, \rho^R (u, z ))  \big) \nonumber \\
         =~& \{ \{ (x, u), (y, v) \}_\dashv , (z, w) \}_\dashv + \{ (y, v), \{ (x, u), (z, w) \}_\dashv \}_\vdash. \nonumber
    \end{align}
    Thus, the identity (\ref{dl1}) follows. We also have
    \begin{align*}
        \{ (x, u), \{ (y, v), (z, w) \}_\vdash \}_\dashv =~& \big( \{ x, \{ y, z \} \}, \rho^R (u, \{ y, z \} )  \big) \\
        =~& \{ \{ (x, u), (y, v) \}_\dashv , (z, w) \}_\dashv + \{ (y, v) , \{ (x, u), (z, w) \}_\dashv \}_\vdash \quad  (\text{by } (\ref{dl11}))
    \end{align*}
    which verifies the identity (\ref{dl2}). Similarly, 
    \begin{align*}
        \{ (x, u), \{ (y, v), (z, w) \}_\dashv \}_\vdash 
        =~& \big(  \{ x, \{ y, z \} \}, \rho^L (x, \rho^R (v, z) )  \big) \\
        =~& \big(  \{ \{ x, y \}, z \} + \{ y, \{ x, z \} \}, \rho^R (\rho^L (x, v), z) + \rho^R (v, \{ x, z \}) \big) \\
        =~& \big(  \{ \{ x, y \}, z \} , \rho^R (\rho^L (x, v), z) \big) + \big(  \{ y, \{ x, z \} \}, \rho^R (v, \{ x, z \} ) \big) \\
        =~& \{ \{ (x, u), (y, v) \}_\vdash , (z, w) \}_\dashv + \{ (y, v), \{ (x, u), (z, w) \}_\dashv \}_\dashv.
    \end{align*}
    Hence the identity (\ref{dl3}) also follows. Finally, 
    \begin{align*}
        &\{ (x, u), \{ (y, v), (z, w) \}_\vdash \}_\vdash \\
        &= \big(  \{ x, \{ y, z \} \}, \rho^L (x, \rho^L (y, w)) \big) \\
        &= \big( \{ \{ x, y \}, z \} + \{ y, \{ x, z \} \}, \rho^L (\{ x, y \}, w) + \rho^L (y, \rho^L (x, w))   \big) \\
        &= \big(  \{ \{ x, y \}, z \}, \rho^L (\{ x, y \}, w) \big) + \big( \{ y, \{x, z \} \}, \rho^L (y, \rho^L (x, w) )    \big) \\
        &=\begin{cases}
            = \{ \{ (x, u), (y, v) \}_\dashv , (z, w) \}_\vdash + \{ (y, v), \{ (x, u), (z, w) \}_\vdash \}_\vdash ~~~ (\text{which verifies } (\ref{dl4})), \\
            = \{ \{ (x, u), (y, v) \}_\vdash , (z, w) \}_\vdash + \{ (y, v), \{ (x, u), (z, w) \}_\vdash \}_\vdash ~~~ (\text{which verifies } (\ref{dl5})).
        \end{cases}
    \end{align*}
    This completes the proof.
\end{proof}

The di-Leibniz algebra $(\mathfrak{h} \oplus V, \{ ~, ~\}_\dashv, \{ ~, ~\}_\vdash)$ constructed in the above proposition is called the hemi-semidirect product and it is denoted by $\mathfrak{h} \oplus_\mathrm{Hemi} V$.

\begin{defn}
  Let $(\mathfrak{h}, \{ ~, ~ \})$ be a Leibniz algebra. A linear map $P: \mathfrak{h} \rightarrow \mathfrak{h}$ is said to be 
  \begin{itemize}
      \item[(i)]  a {\bf left averaging operator} on $(\mathfrak{h}, \{ ~, ~ \})$ if it satisfies
  \begin{align*}
     \{ P (x) , P (y) \} = P \{ P(x), y \}, \text{ for } x, y \in \mathfrak{h}.
  \end{align*}
  \item[(ii)] an {\bf averaging operator} on $(\mathfrak{h}, \{ ~, ~ \})$ if it satisfies
  \begin{align*}
     \{ P (x) , P (y) \} = P \{ P(x), y \} = P \{ x, P(y)\}, \text{ for } x, y \in \mathfrak{h}.
  \end{align*}
  \end{itemize}

  An {\bf averaging Leibniz algebra} is a pair $(\mathfrak{h}, P)$ consisting of a Leibniz algebra $\mathfrak{h}$ endowed with an averaging operator $P$ on it.
  \end{defn}

   Let $(\mathfrak{h}, \{ ~, ~ \})$ be a Leibniz algebra and $(V, \rho^L, \rho^R)$ be a representation of it. A {\bf relative averaging operator} on $\mathfrak{h}$ with respect to the representation $V$ is a linear map $P : V \rightarrow \mathfrak{h}$ that satisfies
  \begin{align*}
      \{ P(u), P(v) \} = P (\rho^L (P (u), v)) = P (\rho^R (u, P(v))), \text{ for } u, v \in V.
  \end{align*}
  It follows that an averaging operator on the Leibniz algebra $\mathfrak{h}$ is nothing but a relative averaging operator with respect to the adjoint representation.

\begin{exam}
    Let $(\mathfrak{h}, \{ ~, ~ \})$ be a Leibniz algebra. Then the identity map $\mathrm{Id}: \mathfrak{h} \rightarrow \mathfrak{h}$ is an averaging operator on $\mathfrak{h}$.

    More generally, let $(\mathfrak{h}, \{ ~, ~ \})$ be a Leibniz algebra and $(V, \rho^L, \rho^R)$ be a representation of it. Suppose $f: V \rightarrow \mathfrak{h}$ is a morphism of $\mathfrak{h}$-representations from $V$ to the adjoint representation (see Example \ref{bimodule-map}). Then $f$ is a relative averaging operator.
\end{exam}

\begin{exam}
    Let $(\mathfrak{h}, d)$ be a differential Leibniz algebra. Then it is easy to see that $d: \mathfrak{h} \rightarrow \mathfrak{h}$ is an averaging operator on the Leibniz algebra $\mathfrak{h}.$
\end{exam}

\begin{exam}
    Let $(\mathfrak{h}, \{ ~, ~ \})$ be a Leibniz algebra. Then the space $\underbrace{\mathfrak{h} \oplus \cdots \oplus \mathfrak{h}}_{n \text{ copies}}$ can be given a representation of the Leibniz algebra $\mathfrak{h}$ with the left and right $\mathfrak{h}$-actions given by
    \begin{align*}
        \rho^L (x, (x_1, \ldots, x_n )) := \big( \{ x, x_1 \}, \ldots, \{ x, x_n \}   \big) ~~ \text{ and } ~~ \rho^R ((x_1, \ldots, x_n ), x) := \big( \{ x_1, x \}, \ldots, \{ x_n , x \} \big),
    \end{align*}
    for $x \in \mathfrak{h}$ and $(x_1, \ldots, x_n) \in \mathfrak{h} \oplus \cdots \oplus \mathfrak{h}$. With this notation, the map 
    \begin{align*}
        P : \mathfrak{h} \oplus \cdots \oplus \mathfrak{h} \rightarrow \mathfrak{h}, ~ (x_1, \ldots, x_n) \mapsto x_1 + \cdots + x_n
    \end{align*}
    is a relative averaging operator. Moreover, for each $1 \leq i \leq n$, the $i$-th projection map $\mathrm{pr}_i : \mathfrak{h} \oplus \cdots \oplus \mathfrak{h} \rightarrow \mathfrak{h},$ $ (x_1, \ldots, x_n) \mapsto x_i$ is also a relative averaging operator.
\end{exam}

In the next, we give a characterization of a (relative) averaging operator in terms of its graph. More precisely, we have the following result.

\begin{prop}
    Let $(\mathfrak{h}, \{ ~, ~ \})$ be a Leibniz algebra and $(V, \rho^L, \rho^R)$ be a representation. A linear map $P: V \rightarrow \mathfrak{h}$ is a relative averaging operator if and only if the graph $\mathrm{Gr} (P) = \{ (P(v), v) |~ v \in V \}$ is a sub di-Leibniz algebra of the hemi-semidirect product $\mathfrak{h} \oplus_\mathrm{Hemi} V$.
\end{prop}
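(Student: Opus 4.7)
The plan is to exploit the explicit definitions of the two brackets on the hemi-semidirect product $\mathfrak{h} \oplus_\mathrm{Hemi} V$ and simply compute both brackets on a pair of elements from the graph, then read off when the results lie back in the graph.

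First I would take arbitrary $u, v \in V$ so that $(P(u), u), (P(v), v) \in \mathrm{Gr}(P)$, and using the formulas for $\{~,~\}_\dashv$ and $\{~,~\}_\vdash$ given in the construction of the hemi-semidirect product, write
\begin{align*}
\{(P(u), u), (P(v), v)\}_\dashv &= \big( \{P(u), P(v)\}, \rho^R(u, P(v)) \big),\\
\{(P(u), u), (P(v), v)\}_\vdash &= \big( \{P(u), P(v)\}, \rho^L(P(u), v) \big).
\end{align*}
Next I would observe that each of these pairs lies in $\mathrm{Gr}(P)$ if and only if its first component equals $P$ applied to its second component; that is, if and only if
\[
\{P(u), P(v)\} = P(\rho^R(u, P(v))) \quad \text{and} \quad \{P(u), P(v)\} = P(\rho^L(P(u), v)),
\]
for all $u, v \in V$. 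These are precisely the two defining identities of a relative averaging operator on $\mathfrak{h}$ with respect to the representation $(V, \rho^L, \rho^R)$, so the equivalence follows.

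There is essentially no obstacle here; the argument is a direct unpacking of definitions. The only mild subtlety is to note that closure of $\mathrm{Gr}(P)$ under both brackets simultaneously corresponds to the two equalities in the definition of relative averaging operator, and that this closure automatically makes $\mathrm{Gr}(P)$ into a sub di-Leibniz algebra since the di-Leibniz identities are inherited from the ambient structure $\mathfrak{h} \oplus_\mathrm{Hemi} V$.
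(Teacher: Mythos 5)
Your proposal is correct and follows essentially the same route as the paper: compute both brackets $\{~,~\}_\dashv$ and $\{~,~\}_\vdash$ on a pair of graph elements and observe that membership in $\mathrm{Gr}(P)$ is exactly the pair of defining identities of a relative averaging operator. The closing remark that the di-Leibniz identities are inherited from the ambient structure is a sensible (if implicit in the paper) addition.
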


\begin{proof}
    Let $(P(u), u)$ and $(P(v), v)$ be two arbitrary elements in $\mathrm{Gr} (P)$. Then we have
    \begin{align*}
        \{ (P(u), u), (P(v), v) \}_\dashv =~& \big(\{ P(u), P(v) \}, \rho^R (u, P(v)) \big), \\  \{ (P(u), u), (P(v), v) \}_\vdash =~& \big(\{ P(u), P(v) \}, \rho^L (P(u), v) \big).
    \end{align*}
    The above two elements are in $\mathrm{Gr} (P)$ if and only if $\{ P(u), P(v) \} = P \big( \rho^R (u, P(v)) \big)$ and $\{ P(u), P(v) \}= P \big(  \rho^L (P(u), v) \big)$. This proves the desired result.
\end{proof}

Since $\mathrm{Gr}(P)$ is linearly isomorphic to $V$, the above proposition yields the following result.

\begin{prop}
   Let $(\mathfrak{h}, \{ ~, ~ \})$ be a Leibniz algebra, $(V, \rho^L, \rho^R)$ be a representation and $P : V \rightarrow \mathfrak{h}$ be a relative averaging operator. Then $V$ inherits a di-Leibniz algebra structure with the operations 
   \begin{align*}
   \{ u, v \}_\dashv^P := \rho^R (u, P(v)) ~~~ \text{ and } ~~~ \{ u, v \}_\vdash^P := \rho^L (P(u), v), \text{ for } u, v \in V.
   \end{align*}
\end{prop}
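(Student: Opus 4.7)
The plan is to deduce the result directly from the preceding proposition by transporting the di-Leibniz algebra structure from $\mathrm{Gr}(P)$ to $V$ along the obvious linear isomorphism. Define $\iota : V \to \mathrm{Gr}(P)$ by $\iota(v) = (P(v), v)$; since $P$ is linear, $\iota$ is a linear isomorphism. By the preceding proposition, $\mathrm{Gr}(P)$ is a sub di-Leibniz algebra of the hemi-semidirect product $\mathfrak{h} \oplus_\mathrm{Hemi} V$, hence inherits a di-Leibniz algebra structure from it. The strategy is to put on $V$ the unique di-Leibniz structure making $\iota$ an isomorphism and then check that the resulting operations coincide with $\{~,~\}^P_\dashv$ and $\{~,~\}^P_\vdash$ of the statement.

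The key computation is to evaluate the hemi-semidirect brackets on elements of $\mathrm{Gr}(P)$. For $u, v \in V$,
\[
\{\iota(u), \iota(v)\}_\dashv = \{(P(u), u), (P(v), v)\}_\dashv = \big( \{P(u), P(v)\},\ \rho^R(u, P(v)) \big).
\]
The relative averaging identity $\{P(u), P(v)\} = P(\rho^R(u, P(v)))$ shows this element already lies in $\mathrm{Gr}(P)$, namely equals $\iota(\rho^R(u, P(v)))$. Applying $\iota^{-1}$ yields
\[
\iota^{-1}\big(\{\iota(u), \iota(v)\}_\dashv\big) = \rho^R(u, P(v)) = \{u, v\}^P_\dashv,
\]
which is exactly the operation stated in the proposition. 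The completely parallel computation for $\vdash$ uses the other half of the relative averaging condition, $\{P(u), P(v)\} = P(\rho^L(P(u), v))$, to give $\iota^{-1}(\{\iota(u), \iota(v)\}_\vdash) = \rho^L(P(u), v) = \{u, v\}^P_\vdash$.

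Since $\iota$ is a linear isomorphism and the brackets on $V$ are defined precisely as the pullbacks of the brackets on the sub di-Leibniz algebra $\mathrm{Gr}(P)$, the identities (\ref{dl1})--(\ref{dl5}) transfer automatically to $V$. Thus $(V, \{~,~\}^P_\dashv, \{~,~\}^P_\vdash)$ is a di-Leibniz algebra, as claimed. There is no real obstacle: the entire substance of the verification sits inside the preceding proposition, and once the identification of brackets via $\iota$ is made explicit, nothing further needs to be computed. If one wished to avoid the graph picture, one could instead verify (\ref{dl1})--(\ref{dl5}) by direct substitution using only the representation axioms for $(V, \rho^L, \rho^R)$ together with the two relative averaging identities, but this would merely repeat in coordinates the structural argument above.
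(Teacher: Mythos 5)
Your proposal is correct and follows exactly the paper's route: the paper also obtains this proposition by noting that $\mathrm{Gr}(P)$ is linearly isomorphic to $V$ and transporting the sub di-Leibniz structure from the preceding graph characterization. You simply make the pullback along $v \mapsto (P(v), v)$ more explicit, which is a harmless elaboration.
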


The above result says that a relative averaging operator on a Leibniz algebra with respect to a representation induces a di-Leibniz algebra structure on the underlying representation space. (In particular, a left averaging operator $P : \mathfrak{h} \rightarrow \mathfrak{h}$ on a Leibniz algebra $(\mathfrak{h}, \{ ~, ~\})$ induces a new Leibniz algebra $(\mathfrak{h}, \{ ~, ~ \}_P)$, where $\{ x, y \}_P = \{ P(x), y \}$ for any $x, y \in \mathfrak{h}$.) The next result gives the converse.

\begin{thm}
Every di-Leibniz algebra is induced by a relative averaging operator on a Leibniz algebra with respect to a representation.
\end{thm}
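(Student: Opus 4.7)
The plan is to construct, from a given di-Leibniz algebra $(\mathfrak{d}, \{~,~\}_\dashv, \{~,~\}_\vdash)$, a triple $(\mathfrak{h}, V, P)$ consisting of a Leibniz algebra $\mathfrak{h}$, a representation $(V, \rho^L, \rho^R)$ of $\mathfrak{h}$, and a relative averaging operator $P : V \to \mathfrak{h}$ such that $V$ carries the given di-Leibniz structure via $P$. The most economical choice is to take $V := \mathfrak{d}$ as a vector space and build $\mathfrak{h}$ as a quotient of $\mathfrak{d}$ in which the two operations are identified, with $P$ the quotient map.

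The construction will go as follows. From axioms (\ref{dl1}) and (\ref{dl2}) one reads off that $\{x, w\}_\dashv = 0$ whenever $w = \{y, z\}_\dashv - \{y, z\}_\vdash$; similarly, (\ref{dl4}) and (\ref{dl5}) give $\{w, z\}_\vdash = 0$ for such $w$. Let $I_0 \subset \mathfrak{d}$ be the span of all such differences, put $\mathfrak{h} := \mathfrak{d}/I_0$ with projection $\pi$, and define $[\pi(x), \pi(y)]_\mathfrak{h} := \pi(\{x, y\}_\vdash)$. Well-definedness in the first argument is immediate from the second annihilation property above, and the Leibniz identity on $\mathfrak{h}$ descends from axiom (\ref{dl5}). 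Take $V := \mathfrak{d}$ with actions $\rho^L(\pi(x), v) := \{x, v\}_\vdash$ and $\rho^R(v, \pi(x)) := \{v, x\}_\dashv$; the two annihilation properties ensure these are well-defined, and the three representation axioms amount to the identities (\ref{dl5}), (\ref{dl3}) and (\ref{dl2}), respectively, after a matching of variables.

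Finally, I would set $P := \pi : V \to \mathfrak{h}$. Since in $\mathfrak{h}$ we have $\pi(\{u, v\}_\dashv) = \pi(\{u, v\}_\vdash)$, the three expressions $[P(u), P(v)]_\mathfrak{h}$, $P(\rho^L(P(u), v))$ and $P(\rho^R(u, P(v)))$ all equal $\pi(\{u, v\}_\vdash)$, so $P$ is a relative averaging operator. Unwinding the induced di-Leibniz structure on $V = \mathfrak{d}$ gives $\{u, v\}_\dashv^P = \rho^R(u, P(v)) = \{u, v\}_\dashv$ and $\{u, v\}_\vdash^P = \rho^L(P(u), v) = \{u, v\}_\vdash$, matching the original operations.

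The main obstacle I anticipate is verifying well-definedness of the bracket on $\mathfrak{h}$ in its second slot, i.e.\ that $\{x, w\}_\vdash \in I_0$ whenever $w = \{a, b\}_\dashv - \{a, b\}_\vdash$. This requires the simultaneous use of (\ref{dl3}) and (\ref{dl5}): after rewriting $\{x, \{a, b\}_\dashv\}_\vdash$ by (\ref{dl3}) and $\{x, \{a, b\}_\vdash\}_\vdash$ by (\ref{dl5}), their difference rearranges into the sum of two generators of $I_0$, namely $\{\{x, a\}_\vdash, b\}_\dashv - \{\{x, a\}_\vdash, b\}_\vdash$ and $\{a, \{x, b\}_\vdash\}_\dashv - \{a, \{x, b\}_\vdash\}_\vdash$. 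Every other consistency check admits a similar short rewriting using the di-Leibniz axioms, so once this bookkeeping is handled the remaining verifications are routine.
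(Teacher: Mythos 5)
Your construction is exactly the one used in the paper: the Leibniz algebra is the quotient $\mathfrak{d}_{\mathrm{Leib}}=\mathfrak{d}/I_0$ with $I_0$ spanned by the differences $\{x,y\}_\dashv-\{x,y\}_\vdash$, the representation on $V=\mathfrak{d}$ is $\rho^L(\langle x\rangle,v)=\{x,v\}_\vdash$, $\rho^R(v,\langle x\rangle)=\{v,x\}_\dashv$, and the quotient map is the relative averaging operator recovering the given structure. Your argument is correct, and in fact it spells out the well-definedness checks (via the two annihilation identities extracted from (\ref{dl1})--(\ref{dl2}) and (\ref{dl4})--(\ref{dl5})) that the paper leaves as ``easy to verify.''
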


\begin{proof}
    Let  $(\mathfrak{d}, \{ ~, ~ \}_\dashv , \{ ~, ~ \}_{\vdash})$ be a di-Leibniz algebra. Consider the space $\mathfrak{d}_\mathrm{Leib}$ which is the quotient of $\mathfrak{d}$ by the subspace generated by elements of the form $\{ x, y \}_\dashv - \{ x, y \}_\vdash$, for $x, y \in \mathfrak{d}$. Then $\mathfrak{d}_\mathrm{Leib}$ carries a Leibniz algebra structure with the bracket
    \begin{align*}
        \{ \langle x \rangle, \langle y \rangle \} := \langle \{ x, y \}_\dashv \rangle = \langle \{ x, y \}_\vdash \rangle, \text{ for } \langle x \rangle, \langle y \rangle \in \mathfrak{d}_\mathrm{Leib}.
    \end{align*}
    Here $\langle x \rangle$ denotes the class of an element $x \in \mathfrak{d}$.
    We define two bilinear maps $\rho^L : \mathfrak{d}_\mathrm{Leib} \times \mathfrak{d} \rightarrow \mathfrak{d}$ and $\rho^R : \mathfrak{d} \times \mathfrak{d}_\mathrm{Leib} \rightarrow \mathfrak{d}$ by
    \begin{align*}
        \rho^L ( \langle x \rangle , y ) := \{ x, y \}_\vdash ~~~ \text{ and } ~~~ \rho^R (y, \langle x \rangle) = \{ y, x \}_\dashv, \text{ for } \langle x \rangle \in \mathfrak{d}_\mathrm{Leib}, y \in \mathfrak{d}.
    \end{align*}
    It is easy to verify that the maps $\rho^L, \rho^R$ define a representation of the Leibniz algebra $\mathfrak{d}_\mathrm{Leib}$ on the vector space $\mathfrak{d}$. Moreover, the quotient map $q : \mathfrak{d} \rightarrow \mathfrak{d}_\mathrm{Leib},~ x \mapsto \langle x \rangle$ is a relative averaging operator as 
    \begin{align*}
        \{ q (x) , q(y) \} = \{ \langle x \rangle, \langle y \rangle \} = \begin{cases}
            = \langle \{ x, y \}_\vdash  \rangle = \langle  \rho^L (\langle x \rangle, y) \rangle = q \big( \rho^L (q(x), y) \big), \\
            = \langle \{ x, y \}_\dashv \rangle = \langle  \rho^R (x, \langle y \rangle)  \rangle = q \big( \rho^R (x, q (y)) \big),
        \end{cases}
    \end{align*}
    for all $x, y \in \mathfrak{d}$. Let $(\mathfrak{d}, \{ ~, ~ \}_\dashv^q, \{ ~, ~ \}_\vdash^q)$ be the di-Leibniz algebra structure on $\mathfrak{d}$ induced by the relative averaging operator $q$. Then we have
    \begin{align*}
        \{ x, y \}_\dashv^q = \rho^R (x, \langle y \rangle) = \{ x, y \}_\dashv \quad \text{ and } \quad \{x, y \}_\vdash^q = \rho^L ( \langle x \rangle, y) = \{ x, y \}_\vdash, \text{ for } x, y \in \mathfrak{d}.
    \end{align*}
    This shows that the induced di-Leibniz algebra $(\mathfrak{d}, \{ ~, ~ \}_\dashv^q, \{ ~, ~ \}_\vdash^q)$ coincides with the given one.
\end{proof}

\begin{remark}
It is important to remark that an arbitrary di-Leibniz algebra may not induced from an averaging Leibniz algebra. However, any di-Leibniz algebra can be embedded into an averaging Leibniz algebra. More precisely, let $(\mathfrak{d}, \{ ~, ~ \}_\dashv , \{ ~, ~ \}_{\vdash})$ be a di-Leibniz algebra. Then the direct sum $\mathfrak{d}_\mathrm{Leib} \oplus \mathfrak{d}$ inherits a Leibniz algebra structure with the bracket
\begin{align*}
    \{ (\langle x \rangle, y) ,  (\langle x' \rangle, y') \}_\ltimes := \big( \{ \langle x \rangle, \langle x' \rangle \}, \rho^L (\langle x \rangle, y') + \rho^R (y, \langle x' \rangle)  \big),
\end{align*}
for $(\langle x \rangle, y), (\langle x' \rangle, y') \in \mathfrak{d}_\mathrm{Leib} \oplus \mathfrak{d}$. Moreover, the map $P : \mathfrak{d}_\mathrm{Leib} \oplus \mathfrak{d} \rightarrow \mathfrak{d}_\mathrm{Leib} \oplus \mathfrak{d}$, $P (\langle x \rangle, y) = (\langle y \rangle, 0)$ is an averaging operator. Hence $( \mathfrak{d}_\mathrm{Leib} \oplus \mathfrak{d}, P )$ is an averaging Leibniz algebra. Then the inclusion map $i : \mathfrak{d} \rightarrow \mathfrak{d}_\mathrm{Leib} \oplus \mathfrak{d}$, $i (x) = (0, x)$ is an embedding of the di-Leibniz algebra $(\mathfrak{d}, \{ ~, ~ \}_\dashv , \{ ~, ~ \}_{\vdash})$ into the averaging Leibniz algebra $( \mathfrak{d}_\mathrm{Leib} \oplus \mathfrak{d}, P )$.
\end{remark}

\section{Some Lie functors}\label{sec7}

In Section \ref{sec2}, we have shown that the differentiation of a smooth pointed averaging operator on a Lie group gives rise to an averaging operator on the corresponding Lie algebra. In this section, we continue similar studies on Lie groups and Lie racks. Among others, we show that the differentiation of a smooth pointed averaging operator on a pointed Lie rack gives rise to a left averaging operator on the corresponding Leibniz algebra.

We begin with Ad-invariant maps on groups. Let $G$ be an abstract group. A map $\mathcal{C} : G \rightarrow G$ is said to be {\bf Ad-invariant map} on $G$ if it satisfies
\begin{align}\label{ad-cond}
    \mathcal{C} (ghg^{-1}) = g \mathcal{C}(h) g^{-1}, \text{ for all } g, h \in G.
\end{align}
The condition (\ref{ad-cond}) can be equivalently expressed as $\mathcal{C} \circ \mathrm{Ad}_g = \mathrm{Ad}_g \circ \mathcal{C}$, for all $g \in G$, where $\mathrm{Ad}_g \in \mathrm{Aut}(G)$ is defined by $\mathrm{Ad}_g (h) = ghg^{-1}$. The map $\mathcal{C}$ is said to be pointed if $\mathcal{C}(e) = e$.

Let $(\mathfrak{g}, [~, ~ ])$ be a Lie algebra. A linear map $C: \mathfrak{g} \rightarrow \mathfrak{g}$ is said to be {\bf ad-invariant} if 
\begin{align}\label{ad-inv-cond}
C[x, y] = [x, C(y)], \text{ for all } x, y \in \mathfrak{g}.
\end{align}
In other words, $C \circ \mathrm{ad}_x = \mathrm{ad}_x \circ C$, for all $x \in \mathfrak{g}$, where $\mathrm{ad}_x : \mathfrak{g} \rightarrow \mathfrak{g}$ is the map given by $\mathrm{ad}_x (y) = [x, y].$
Note that the ad-invariant condition (\ref{ad-inv-cond}) is equivalent to $C[x, y] = [C(x) , y]$, for all $x, y \in \mathfrak{g}$.

It is important to remark that Ad-invariant maps or ad-invariant maps are inverses (when invertible) to averaging operators on groups and Lie algebras, respectively. Hence one may expect that Ad-invariant maps on Lie groups and ad-invariant maps on corresponding Lie algebras are related by differentiation. Explicitly, we have the following result.

\begin{thm}
    Let $G$ be a Lie group and $\mathcal{C}: G \rightarrow G$ be a smooth pointed $\mathrm{Ad}$-invariant map on $G$. If $\mathfrak{g}$ is the Lie algebra of the Lie group $G$, then $C = \mathcal{C}_{* e} : \mathfrak{g} \rightarrow \mathfrak{g}$ is an $\mathrm{ad}$-invariant map on $\mathfrak{g}$.
\end{thm}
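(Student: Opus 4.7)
The plan is to imitate the proof of Theorem \ref{firr-thm}, replacing the averaging identity with the Ad-invariance relation. Recall the standard formula
\[
[x,y] \;=\; \frac{d^{2}}{dt\,ds}\Big|_{t,s=0}\exp(tx)\exp(sy)\exp(-tx),\qquad x,y\in\mathfrak{g}.
\]
The strategy is to apply $\mathcal{C}$ to the curve inside this mixed derivative, use Ad-invariance to push $\mathcal{C}$ past the conjugation, and then extract $[x,C(y)]$ on the right hand side.

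First, I would start from
\[
C([x,y]) \;=\; \mathcal{C}_{*e}\!\left(\frac{d^{2}}{dt\,ds}\Big|_{t,s=0}\exp(tx)\exp(sy)\exp(-tx)\right).
\]
Since the inner curve equals the identity $e$ identically on the axis $\{s=0\}$ and $\mathcal{C}(e)=e$, the usual chain rule for iterated directional derivatives allows $\mathcal{C}_{*e}$ to be pulled inside the mixed partial, so that
\[
C([x,y]) \;=\; \frac{d^{2}}{dt\,ds}\Big|_{t,s=0}\mathcal{C}\bigl(\exp(tx)\exp(sy)\exp(-tx)\bigr).
\]
Invoking the Ad-invariance hypothesis $\mathcal{C}(ghg^{-1})=g\,\mathcal{C}(h)\,g^{-1}$ with $g=\exp(tx)$ and $h=\exp(sy)$ rewrites the right hand side as $\frac{d^{2}}{dt\,ds}\big|_{t,s=0}\mathrm{Ad}_{\exp(tx)}\bigl(\mathcal{C}(\exp(sy))\bigr)$, where the outer $\mathrm{Ad}$ now denotes conjugation on $G$.

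Finally, I would take the inner $s$-derivative first: since $\mathcal{C}(\exp(sy))|_{s=0}=e$ and $\frac{d}{ds}\big|_{s=0}\mathcal{C}(\exp(sy))=C(y)$, differentiating the group automorphism $\mathrm{Ad}_{\exp(tx)}$ at $e$ in the direction $C(y)$ yields $\mathrm{Ad}_{\exp(tx)}(C(y))$, now interpreted as the adjoint representation on $\mathfrak{g}$. Then $\frac{d}{dt}\big|_{t=0}\mathrm{Ad}_{\exp(tx)}(C(y))=\mathrm{ad}_{x}(C(y))=[x,C(y)]$. Combining these steps gives $C[x,y]=[x,C(y)]$, which is the ad-invariance of $C$.

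The main technical subtlety is the step where $\mathcal{C}_{*e}$ is pulled inside the mixed partial derivative. This relies on the fact that the curve $(t,s)\mapsto\exp(tx)\exp(sy)\exp(-tx)$ sits at $e$ for every $t$ when $s=0$, together with $\mathcal{C}(e)=e$; these two facts ensure that the first-order terms that would normally complicate a second-derivative chain rule vanish, exactly as in the analogous step in the proof of Theorem \ref{firr-thm}.
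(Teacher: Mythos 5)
Your proposal is correct and follows essentially the same route as the paper's own proof: write $[x,y]$ as the mixed derivative of the conjugation curve, pass $\mathcal{C}$ inside (which the paper does without comment and you justify via $\mathcal{C}(e)=e$ and the vanishing of the curve along $s=0$), apply Ad-invariance, and differentiate $\mathrm{Ad}_{\exp(tx)}\mathcal{C}(\exp(sy))$ to obtain $[x,C(y)]$. The only difference is that you spell out the chain-rule subtlety that the paper leaves implicit.
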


\begin{proof}
    For any $x, y \in \mathfrak{g}$, we have
    \begin{align*}
        C[x,y] =~& \frac{d^2}{dt ds} \bigg|_{t,s = 0} \mathcal{C} \big( \mathrm{exp}^{tx} \mathrm{exp}^{sy} \mathrm{exp}^{- tx}  \big) \\
        =~& \frac{d^2}{dt ds} \bigg|_{t,s = 0} \mathrm{exp}^{tx} \mathcal{C} (\mathrm{exp}^{sy}) \mathrm{exp}^{- tx}   \\
        =~&  \frac{d^2}{dt ds} \bigg|_{t,s = 0} \mathrm{Ad}_{ \mathrm{exp}^{tx}} \mathcal{C} (\mathrm{exp}^{sy}) = [x, C(y)].
    \end{align*}
    This completes the proof.
\end{proof}

Next, we recall pointed Lie racks (see, for instance, \cite{kinyon}) and the construction of their Leibniz algebras. This construction is a generalization of the construction of the Lie algebra of a Lie group.

\begin{defn}
    A {\bf Lie rack} is a rack $(Q, \diamond)$ in which $Q$ is a smooth manifold and the rack operation $\diamond: Q \times Q \rightarrow Q$ is a smooth map. A Lie rack $(Q, \diamond)$ is said to be {\bf pointed} if the underlying rack is pointed.
\end{defn}

    Like racks are a generalization of the conjugation operation of groups, Lie racks are a generalization of the conjugation of Lie groups. More precisely, let $G$ be a Lie group with the identity element $e$. Then the corresponding conjugation rack $(G, \diamond)$ is a pointed Lie rack with distinguished element $e$. More precisely, we have the following result which generalizes Proposition \ref{prop-des} in the smooth context.

    \begin{prop}
    Let $G$ be a Lie group (with the identity element $e$) and $\mathcal{A} : G \rightarrow G$ be a smooth pointed averaging operator, then the descendent rack $(G, \diamond_\mathcal{A})$ is a pointed Lie rack with distinguished element $e$.
\end{prop}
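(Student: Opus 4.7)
The plan is to verify the three requirements of a pointed Lie rack with distinguished element $e$: namely, that $(G, \diamond_\mathcal{A})$ is a rack, that $e$ is a distinguished element making it a pointed rack, and that the binary operation $\diamond_\mathcal{A}$ is smooth on the manifold $G$.

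First I would invoke Proposition \ref{prop-des}(i), which already establishes that $(G, \diamond_\mathcal{A})$ is a rack whenever $\mathcal{A} : G \to G$ is an averaging operator on $G$. That result makes no smoothness demand, so it applies verbatim in the Lie group setting. For the pointed structure, I would use the hypothesis $\mathcal{A}(e) = e$ and observe directly from the definition $g \diamond_\mathcal{A} h = \mathcal{A}(g) h \mathcal{A}(g)^{-1}$ that
\[
e \diamond_\mathcal{A} h = \mathcal{A}(e) h \mathcal{A}(e)^{-1} = e h e^{-1} = h \qquad \text{and} \qquad g \diamond_\mathcal{A} e = \mathcal{A}(g) e \mathcal{A}(g)^{-1} = e,
\]
for all $g, h \in G$. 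This is precisely the pointed rack condition with distinguished element $e$.

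The remaining content is the smoothness of the binary operation $\diamond_\mathcal{A} : G \times G \to G$. The map factors as the composition
\[
(g, h) \longmapsto (\mathcal{A}(g), h, \mathcal{A}(g)^{-1}) \longmapsto \mathcal{A}(g) \cdot h \cdot \mathcal{A}(g)^{-1},
\]
in which the first arrow is smooth because $\mathcal{A}$ is smooth by hypothesis and inversion $(-)^{-1}$ is smooth on the Lie group $G$, while the second arrow is just iterated multiplication, again smooth on $G$. Hence $\diamond_\mathcal{A}$ is a smooth map, which finishes the verification.

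I do not expect any real obstacle: every non-trivial input (the rack axioms) was already discharged in Proposition \ref{prop-des}, and the Lie-rack upgrade is essentially just a smoothness check arising from the smoothness of $\mathcal{A}$ together with the Lie group structure on $G$. The only thing to be mildly careful about is that the pointedness of the averaging operator ($\mathcal{A}(e) = e$) is what upgrades the descendent rack to a pointed rack with the same distinguished element, so I would flag that dependence explicitly in the write-up.
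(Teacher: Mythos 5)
Your proof is correct and is exactly the argument the paper intends: the paper states this proposition without proof, presenting it as the smooth upgrade of Proposition \ref{prop-des}, and your write-up supplies the omitted details (rack axioms from Proposition \ref{prop-des}, pointedness from $\mathcal{A}(e)=e$, smoothness of $\diamond_{\mathcal{A}}$ from smoothness of $\mathcal{A}$ and of the group operations) in the natural way. Note only that Proposition \ref{prop-des}(i) already records the pointedness claim, so your direct verification of $e \diamond_{\mathcal{A}} h = h$ and $g \diamond_{\mathcal{A}} e = e$ is a (harmless) repetition rather than new content.
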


Given a pointed Lie rack $(Q, \diamond, e)$, it has been observed in \cite{kinyon} that the distinguished tangent space $T_e Q$ inherits a Leibniz algebra structure. For our convenience, we recall the construction. Let $(Q, \diamond, e)$ be a pointed Lie rack. For any $x \in Q$, consider the smooth (left multiplication) map $L_x^\diamond: Q \rightarrow Q$. Since $L_x^\diamond (e) = e$, by taking the differentiation of $L_x^\diamond$ at the point $e$, one obtains a linear map $T_e (L_x^\diamond) : T_e Q \rightarrow T_e Q$. Moreover, the left multiplication map $L_x^\diamond$ is invertible implies that $T_e (L_x^\diamond) \in GL (T_e Q)$, for all $x \in Q$. As a result, we get a smooth map $\Theta : Q \rightarrow GL (T_e Q)$, $x \mapsto T_e (L_x^\diamond)$. Further, we have $\Theta (e) = T_e (L_e^\diamond) = \mathrm{Id}_{T_e Q}$. By differentiating the map $\Theta$ at the point $e$, one gets the map $\mathrm{ad}: T_e Q \rightarrow gl (T_e Q)$, where we have identified the tangent space at the identity element of $GL (V)$ for a vector space $V$ with the general linear Lie algebra $gl (V)$. For any $X, Y \in T_e Q$, we define a bilinear operation
\begin{align*}
    \{ X, Y \} := \mathrm{ad}(X) Y.
\end{align*}
Note that, in terms of the left multiplications, the left-distributivity property of the rack can be written as
\begin{align}\label{left-lier}
    L_x^\diamond L_y^\diamond (z) = L^\diamond_{L_x^\diamond (y)} L_x^\diamond (z), \text{ for } x, y, z \in Q.
\end{align}
By taking the differentiation of (\ref{left-lier}) at $e$, first with respect to $z$ and then with respect to $y$, we obtain $\Theta (x) \{ Y, Z \} = \{ \Theta (x) Y, \Theta (x) Z \}$, for all $x \in Q$ and $Y, Z \in T_e Q$. Finally, by differentiating the above identity with respect to $x$ at the point $e$, we get the Leibniz identity 
\begin{align*}
    \{ X, \{ Y, Z \} \} = \{ \{ X, Y \}, Z \} + \{ Y, \{ X, Z \} \}, \text{ for } X, Y, Z \in T_e Q.
\end{align*}
Hence $(T_e Q, \{ ~, ~ \})$ becomes a Leibniz algebra, called the tangent Leibniz algebra of the pointed Lie arack $(Q, \diamond, e)$.

\medskip

Let $G$ be a Lie group and $\mathcal{A}: G \rightarrow G$ be a smooth pointed averaging operator on $G$. In Theorem \ref{firr-thm}, we have seen that the map $\mathcal{A}_{* e}: \mathfrak{g} \rightarrow \mathfrak{g}$ (the differentiation of $\mathcal{A}$ at the identity element $e$) is an averaging operator on the corresponding Lie algebra $\mathfrak{g} = T_e G$. Hence by Proposition \ref{avg-ind-leib}, the space $\mathfrak{g}$ inherits a Leibniz algebra structure with the bracket
\begin{align}\label{first-l}
    \{ X,Y \}_{\mathcal{A}_{* e}} :=~& [\mathcal{A}_{* e} (X), Y] \nonumber\\
    =~& \frac{d^2}{dt ds}|_{t, s = 0} \mathrm{exp}^{\mathcal{A}_{*e} (tX)} \mathrm{exp}^{sY} \mathrm{exp}^{- \mathcal{A}_{*e} (tX)} \nonumber \\
    =~& \frac{d^2}{dt ds}|_{t, s = 0} \mathcal{A} (\mathrm{exp}^{tX}) \mathrm{exp}^{sY} \mathcal{A} (\mathrm{exp}^{tX})^{-1},
\end{align}
for $X,Y \in \mathfrak{g}.$
On the other hand, the smooth pointed averaging operator $\mathcal{A} : G \rightarrow G$ gives rise to the pointed Lie rack $(G, \diamond_\mathcal{A}, e)$. Hence one obtains the tangent Leibniz algebra on the vector space $T_e G$. The Leibniz bracket is precisely given by
\begin{align}\label{new-l}
 \{ X, Y \}  := \mathrm{ad}(X) Y = T_e (\Theta) (X) Y = \frac{d^2}{dt dt} |_{t, s =0} \mathrm{Ad}_{  \mathcal{A} (\mathrm{exp}^{tX}) }  \mathrm{exp}^{sY} ~~ (\text{since } L_x^{\diamond_\mathcal{A}} = L^\diamond_{\mathcal{A} (x)}).
\end{align}
It follows from the expressions (\ref{first-l}) and (\ref{new-l}) that one obtains the commutative diagram 
\begin{align}\label{first-diag}
\xymatrix{
\text{Lie group} \ar[d]_{\text{Diff}} \ar[rr]^{\text{pointed avg operator}} & & \text{pointed Lie rack} \ar[d]^{\text{Diff}} \\
\text{Lie algebra} \ar[rr]_{\text{avg operator}} & & \text{Leibniz algebra.}
}
\end{align}

%\[
%\xymatrix{
%\text{Lie group} \ar[d]_{\text{Diff}} \ar[rr]^{\text{pointed avg operator}} & & \text{pointed Lie rack} \ar[d]^{\text{Diff}} && \text{Lie group} \ar[d]_{\text{Diff}} \ar[rr]^{\text{pointed avg operator}} & & \text{Lie group-rack} \ar[d]^{\text{Diff}} \\
%\text{Lie algebra} \ar[rr]_{\text{avg operator}} & & \text{Leibniz algebra} && \text{Lie algebra} \ar[rr]_{\text{avg operator}} & & \text{Lie-Leibniz algebra.}
%}
%\]

%If $\mathcal{A}$ is a smooth pointed averaging operator on a Lie group $G$ then the descendent rack $(G, \diamond_\mathcal{A})$ is also pointed.

\medskip

Next, we consider pointed Lie group-racks and show that the differentiation at the identity element gives rise to the structure of a Lie-Leibniz algebra. We begin with the algebraic notion first.

\begin{defn}
    A {\bf Lie-Leibniz algebra} is a triple $(\mathfrak{g} , [~,~], \{ ~, ~ \})$ in which $(\mathfrak{g} , [~,~])$ is a Lie algebra and $(\mathfrak{g}, \{ ~, ~ \})$ is a Leibniz algebra satisfying the additional compatibility
    \begin{align}\label{ll-iden}
        \{ x, [y, z ] \} = [ \{ x, y \}, z] + [y, \{ x, z \}], \text{ for } x, y, z \in \mathfrak{g}.
    \end{align}
\end{defn}

Note that the compatibility condition (\ref{ll-iden}) says that all the left translations $\{ x, - \}$ by the Leibniz bracket is a derivation for the Lie bracket on $\mathfrak{g}$.

Any Lie algebra $(\mathfrak{g} , [~,~])$ can be regarded as a Lie-Leibniz algebra by taking $\{ ~, ~ \} = [~,~]$. In the following, we give two important classes of examples, one arising from representations and another arising from averaging operators on Lie algebras.

\begin{exam}
    Let $(\mathfrak{g}, [~, ~])$ be a Lie algebra and $(V, \rho)$ be a representation (i.e. $\rho : \mathfrak{g} \times V \rightarrow V$ is a bilinear map satisfying $\rho ([x, y], v) = \rho (x , \rho (y, v)) - \rho (y, \rho (x, v))$, for $x, y \in \mathfrak{g}$ and $v \in V$). Then the direct sum $\mathfrak{g} \oplus V$ inherits the semidirect product Lie algebra and the hemi-semidirect product Leibniz algebra structure with the brackets
    \begin{align*}
        &\text{semidirect product Lie} \qquad \qquad \qquad [(x, u), (y, v)]_\ltimes := ([x, y], \rho (x, v) - \rho (y, u)),\\
        &\text{hemi-semidirect product Leibniz} \qquad \{ (x, u), (y, v)\} := ([x, y], \rho (x, v) ),
    \end{align*}
    for $(x, u), (y, v) \in \mathfrak{g} \oplus V$. Then it is easy to see that $(\mathfrak{g} \oplus V, [~,~]_\ltimes , \{ ~, ~ \})$ is a Lie-Leibniz algebra.
\end{exam}

\begin{prop}\label{avg-ind-leib}
    Let $(\mathfrak{g}, [~,~])$ be a Lie algebra and $\mathcal{A} : \mathfrak{g} \rightarrow \mathfrak{g}$ be an averaging operator on $\mathfrak{g}$. Then $(\mathfrak{g}, \{ ~,~ \}_\mathcal{A})$ is a Leibniz algebra (called the descendent Leibniz algebra), where $\{ x, y \}_\mathcal{A}:= [ \mathcal{A} (x), y]$, for $x, y \in \mathfrak{g}$. Moreover, the triple $(\mathfrak{g} , [~,~], \{ ~,~ \}_\mathcal{A})$ is a Lie-Leibniz algebra.
\end{prop}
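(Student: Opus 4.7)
The plan is to verify the two conditions separately: first that $\{~,~\}_\mathcal{A}$ satisfies the Leibniz identity, and second that it is compatible with $[~,~]$ in the Lie-Leibniz sense. Both verifications reduce to the Jacobi identity together with the defining averaging property $[\mathcal{A}(x),\mathcal{A}(y)] = \mathcal{A}[\mathcal{A}(x),y]$.

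For the Leibniz identity, I would expand $\{x,\{y,z\}_\mathcal{A}\}_\mathcal{A} = [\mathcal{A}(x),[\mathcal{A}(y),z]]$ on the left. On the right, the term $\{\{x,y\}_\mathcal{A},z\}_\mathcal{A} = [\mathcal{A}[\mathcal{A}(x),y],z]$ equals $[[\mathcal{A}(x),\mathcal{A}(y)],z]$ after applying the averaging identity, while the term $\{y,\{x,z\}_\mathcal{A}\}_\mathcal{A}$ equals $[\mathcal{A}(y),[\mathcal{A}(x),z]]$ directly. The required equality
\[
[\mathcal{A}(x),[\mathcal{A}(y),z]] = [[\mathcal{A}(x),\mathcal{A}(y)],z] + [\mathcal{A}(y),[\mathcal{A}(x),z]]
\]
is then just the Jacobi identity of $\mathfrak{g}$ applied to the triple $\mathcal{A}(x),\mathcal{A}(y),z$. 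So $(\mathfrak{g},\{~,~\}_\mathcal{A})$ is a Leibniz algebra.

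For the Lie-Leibniz compatibility (\ref{ll-iden}), I would directly compute $\{x,[y,z]\}_\mathcal{A} = [\mathcal{A}(x),[y,z]]$ and $[\{x,y\}_\mathcal{A},z] + [y,\{x,z\}_\mathcal{A}] = [[\mathcal{A}(x),y],z] + [y,[\mathcal{A}(x),z]]$. Equality of these two expressions is, once again, just the Jacobi identity in $\mathfrak{g}$ applied to $\mathcal{A}(x),y,z$. Hence the compatibility holds, and $(\mathfrak{g},[~,~],\{~,~\}_\mathcal{A})$ is a Lie-Leibniz algebra.

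There is no real obstacle here: everything follows from Jacobi plus one application of the averaging identity. The only subtle point worth emphasizing is that the averaging identity is used exactly once, in the Leibniz-identity check, to rewrite $\mathcal{A}[\mathcal{A}(x),y]$ as $[\mathcal{A}(x),\mathcal{A}(y)]$; the Lie-Leibniz compatibility does not require the averaging hypothesis at all and holds as soon as $\mathcal{A}$ is any linear endomorphism of $\mathfrak{g}$.
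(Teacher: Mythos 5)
Your proof is correct and follows essentially the same route as the paper: expand both sides, apply the Jacobi identity to the triple $\mathcal{A}(x),\mathcal{A}(y),z$ (resp.\ $\mathcal{A}(x),y,z$), and use the averaging identity once to rewrite $\mathcal{A}[\mathcal{A}(x),y]$ as $[\mathcal{A}(x),\mathcal{A}(y)]$. Your closing observation that the Lie-Leibniz compatibility holds for an arbitrary linear endomorphism is accurate and a nice point the paper does not make explicit, but it does not change the argument.
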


\begin{proof}
    For any $x, y, z \in \mathfrak{g}$, we have
    \begin{align*}
        \{ x, \{ y, z \}_\mathcal{A} \}_\mathcal{A} = [\mathcal{A}(x), [\mathcal{A} (y), z]] =~& [[ \mathcal{A}(x), \mathcal{A}(y)], z ] + [ \mathcal{A}(y), [\mathcal{A}(x), z]] \\
        =~& \{ \{ x, y \}_\mathcal{A} , z \}_\mathcal{A} + \{ y, \{ x, z \}_\mathcal{A} \}_\mathcal{A}.
    \end{align*}
    Hence $(\mathfrak{g}, \{ ~,~ \}_\mathcal{A})$ is a Leibniz algebra. Moreover, we have
    \begin{align*}
        \{ x, [y, z ] \}_\mathcal{A} = [\mathcal{A}(x) , [y, z] ] =~& [[\mathcal{A} (x), y] , z ] + [y, [\mathcal{A}(x), z ]] \\
        =~& [\{ x, y \}_\mathcal{A}, z] + [y, \{ x, z \}_\mathcal{A}].
    \end{align*}
    This shows that $(\mathfrak{g} , [~,~], \{ ~,~ \}_\mathcal{A})$ is a Lie-Leibniz algebra.
\end{proof}

The next result gives the partial converse of the above proposition.

\begin{thm}
    Every Lie-Leibniz algebra can be embedded into an averaging Lie algebra.
\end{thm}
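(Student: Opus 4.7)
The plan is to embed $(\mathfrak{g}, [~,~], \{~,~\})$ into an averaging Lie algebra built as a semidirect product, in direct analogy with the embedding of pointed group-racks into averaging groups earlier in Section~\ref{sec2}, and the di-Leibniz embedding at the end of Section~\ref{sec6}. For each $x \in \mathfrak{g}$, write $\ell_x := \{x, -\} \in \mathrm{End}(\mathfrak{g})$. The Lie-Leibniz compatibility (\ref{ll-iden}) says exactly that each $\ell_x$ is a derivation of $(\mathfrak{g}, [~,~])$, while the Leibniz identity (\ref{leib-id}) is equivalent to the key operator identity $[\ell_x, \ell_y] = \ell_{\{x, y\}}$. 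Consequently $L := \ell(\mathfrak{g})$ is a Lie subalgebra of $\mathrm{Der}(\mathfrak{g}, [~,~])$, and the tautological action of $L$ on $\mathfrak{g}$ is by derivations of the Lie bracket.

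Next I will form the semidirect product Lie algebra $L \ltimes \mathfrak{g}$ with bracket
\[
[(D, u), (D', u')]_\ltimes := \big([D, D'],\, D(u') - D'(u) + [u, u']\big),
\]
and define $P : L \ltimes \mathfrak{g} \to L \ltimes \mathfrak{g}$ by $P(D, u) := (\ell_u, 0)$. The map $P$ is manifestly linear, and a short unravelling using $[\ell_u, \ell_{u'}] = \ell_{\{u, u'\}}$ shows that both $[P(X), P(Y)]_\ltimes$ and $P[P(X), Y]_\ltimes$ reduce to $(\ell_{\{u, u'\}}, 0)$ for $X = (D, u)$ and $Y = (D', u')$. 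The third form $P[X, P(Y)]_\ltimes$ agrees automatically by antisymmetry of the Lie bracket, as noted right after the definition of an averaging Lie algebra. Hence $(L \ltimes \mathfrak{g}, P)$ is an averaging Lie algebra.

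To finish I will check that the obvious inclusion $i : \mathfrak{g} \hookrightarrow L \ltimes \mathfrak{g}$, $x \mapsto (0, x)$, is an embedding of Lie-Leibniz algebras, where the target carries its semidirect product Lie bracket together with the descendent Leibniz bracket $\{X, Y\}_P := [P(X), Y]_\ltimes$ induced by $P$ via Proposition~\ref{avg-ind-leib}. Both checks follow directly from the semidirect product formula: $[(0, x), (0, y)]_\ltimes = (0, [x, y])$ and $\{i(x), i(y)\}_P = [(\ell_x, 0), (0, y)]_\ltimes = (0, \ell_x(y)) = i(\{x, y\})$. The only nontrivial step in the whole argument is establishing $[\ell_x, \ell_y] = \ell_{\{x, y\}}$, which is the Leibniz identity in operator form; once that identity is secured, the rest is a routine unpacking of the semidirect product bracket, and the construction parallels cleanly the earlier group-rack and di-Leibniz embeddings.
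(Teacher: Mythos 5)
Your proof is correct and follows essentially the same route as the paper: a semidirect product of a Lie algebra of left translations with $\mathfrak{g}$, the averaging operator $P$ defined by ``swap and project,'' and the inclusion into the second factor as the embedding. The only (cosmetic) difference is that you realize the acting Lie algebra as the image $\ell(\mathfrak{g}) \subset \mathrm{Der}(\mathfrak{g},[~,~])$ rather than as the quotient Lie algebra $\mathfrak{g}^{\{~,~\}}_{\mathrm{Lie}}$ of the Leibniz algebra used in the paper; these differ at most by a central piece and the argument is unaffected.
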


\begin{proof}
    Let $(\mathfrak{g}, [~,~], \{ ~, ~ \})$ be a Lie-Leibniz algebra. Consider the Lie algebra $\mathfrak{g}^{ \{ ~, ~ \} }_\mathrm{Lie}$ associated to the Leibniz algebra $(\mathfrak{g}, \{ ~, ~ \})$. That is, $\mathfrak{g}^{ \{ ~, ~ \} }_\mathrm{Lie} = \mathfrak{g} / \{ \mathfrak{g}, \mathfrak{g} \} $ equipped with the Lie bracket
    \begin{align*}
         \llbracket \langle x \rangle, \langle y \rangle \rrbracket := \langle \{ x, y \} \rangle, \text{ for } \langle x \rangle, \langle y \rangle \in \mathfrak{g}^{ \{ ~, ~ \} }_\mathrm{Lie}.
    \end{align*}
     The Lie algebra $( \mathfrak{g}^{ \{ ~, ~ \} }_\mathrm{Lie}, \llbracket ~, ~ \rrbracket )$ has a representation on the vector space $\mathfrak{g}$ with the action map $\rho : \mathfrak{g}^{ \{ ~, ~ \} }_\mathrm{Lie} \times \mathfrak{g} \rightarrow \mathfrak{g}$ given by $\rho (\langle x \rangle, y) := \{ x, y \}$, for $\langle x \rangle \in \mathfrak{g}^{ \{ ~, ~ \} }_\mathrm{Lie}$ and $y \in \mathfrak{g}$. It follows from (\ref{ll-iden}) that the map $\rho$ additionally satisfies
    \begin{align*}
        \rho (\langle x \rangle,  [y, z]) = [\rho (\langle x \rangle, y) , z] + [y , \rho (\langle x \rangle, z)], \text{ for } y, z \in \mathfrak{g}.
    \end{align*}
    In other words, the Lie algebra $\mathfrak{g}^{ \{ ~, ~ \} }_\mathrm{Lie}$ acts on the Lie algebra $(\mathfrak{g}, [~,~ ])$ by derivations. As a consequence, the space $\mathfrak{g}^{ \{ ~, ~ \} }_\mathrm{Lie} \oplus \mathfrak{g}$ inherits a Lie algebra structure with the Lie bracket
    \begin{align*}
       \llbracket (\langle x \rangle, y), (\langle x' \rangle, y') \rrbracket_\ltimes := \big( \llbracket \langle x \rangle, \langle x' \rangle \rrbracket, \rho (\langle x \rangle, y') - \rho (\langle x' \rangle,  y) + [y, y'] \big),
    \end{align*}
    for $(\langle x \rangle, y), (\langle x' \rangle, y') \in \mathfrak{g}^{ \{ ~, ~ \} }_\mathrm{Lie} \oplus \mathfrak{g}.$ Moreover, the map $P : \mathfrak{g}^{ \{ ~, ~ \} }_\mathrm{Lie} \oplus \mathfrak{g} \rightarrow \mathfrak{g}^{ \{ ~, ~ \} }_\mathrm{Lie} \oplus \mathfrak{g}$ defined by $P (\langle x \rangle, y) := (\langle y \rangle, 0)$ is an averaging operator on the Lie algebra $\big( \mathfrak{g}^{ \{ ~, ~ \} }_\mathrm{Lie} \oplus \mathfrak{g}, \llbracket ~, ~ \rrbracket_\ltimes \big)$. That is, $\big( \mathfrak{g}^{ \{ ~, ~ \} }_\mathrm{Lie} \oplus \mathfrak{g}, P  \big)$ is an averaging Lie algebra. Finally, it is easy to verify that the inclusion map $i : \mathfrak{g} \hookrightarrow  \mathfrak{g}^{ \{ ~, ~ \} }_\mathrm{Lie} \oplus \mathfrak{g}, ~x \mapsto (0, x)$ is a morphism of Lie-Leibniz algebras, where the Lie-Leibniz algebra structure on $ \mathfrak{g}^{ \{ ~, ~ \} }_\mathrm{Lie} \oplus \mathfrak{g}$ is induced by the averaging operator $P$. Hence the result follows.
\end{proof}

\begin{defn}
    A {\bf pointed Lie group-rack} is a triple $(G, \cdot, \smallblackdiamond)$ in which $(G, \cdot)$ is a Lie group with the identity element $e$ and $(G, \smallblackdiamond, e)$ is a pointed Lie rack satisfying $x \smallblackdiamond (yz) = (x \smallblackdiamond y) (x \smallblackdiamond z)$, for $x, y, z \in G$.
\end{defn}

It follows that a pointed Lie group-rack is a pointed group-rack in which the underlying set is a smooth manifold such that the group operations (multiplication and inverse) and the rack operation are all smooth.

\begin{exam}
    Let $G, H$ be two Lie groups and $\Phi : G \rightarrow \mathrm{Aut}(H)$ be a Lie group homomorphism. Then the cartesian product $G \times H$ carries the semidirect product Lie group structure with the multiplication 
    \begin{align*}
        (x,h) \cdot (y, k) = (xy, h \Phi_x k), \text{ for } (x, h), (y, k) \in G \times H.
    \end{align*}
    On the cartesian product $G \times H$, there is also a Lie rack structure with the operation
    \begin{align*}
        (x,h) \diamond (y,k) = (xyx^{-1}, \Phi_x k), \text{ for } (x, h), (y, k) \in G \times H.
    \end{align*}
    Then it is easy to verify that $(G \times H, \cdot , \diamond)$ is a pointed Lie group-rack.
\end{exam}

\begin{exam}
    Let $(G, \cdot)$ be a Lie group and $\mathcal{A} : G \rightarrow G$ be a smooth pointed averaging operator. Then the triple $(G, \cdot , \diamond_\mathcal{A})$ is a pointed Lie group-rack, where $x \diamond_\mathcal{A} y := \mathcal{A} (x) y \mathcal{A} (x)^{-1}$ is the descendent rack structure.
\end{exam}

Let $(G, \cdot, \smallblackdiamond)$ be a pointed Lie group-rack. Since $(G, \cdot)$ is a Lie group, it yields a Lie algebra structure on the distinguished tangent space $ T_e G$. On the other hand, $(G, \smallblackdiamond, e)$ is a pointed Lie rack implies that $T_e G$ inherits a Leibniz algebra structure with the bracket $\{ X, Y \} := \mathrm{ad}(X) Y$, for $X, Y \in T_e G$. Note that the compatibility condition $x \smallblackdiamond (yz) = (x \smallblackdiamond y) (x \smallblackdiamond z)$  simply means that all left multiplications $L_x^\smallblackdiamond$'s are group automorphisms. Hence for each $x \in G$, the map $T_e (L_x^\smallblackdiamond) : T_eG \rightarrow T_eG$ is a Lie algebra automorphism. Thus, in this case, we obtain a smooth map $\Theta : G \rightarrow \mathrm{Aut} (T_eG), x \mapsto T_e (L_x^\smallblackdiamond)$. By differentiating at the identity element $e$, we precisely get the map $\mathrm{ad} : T_e G \rightarrow \mathrm{Der} (T_e G)$. Hence for any $X, Y, Z \in T_e G$, we have
\begin{align*}
    \{ X, [Y, Z] \} = \mathrm{ad} (X)[Y, Z] 
    =~& [\mathrm{ad} (X) Y, Z] + [Y, \mathrm{ad} (X) Z] \\
   =~& [\{ X, Y \}, Z] + [Y, \{ X, Z \}].
\end{align*}
Therefore, we obtain a Lie-Leibniz algebra $(T_e G, [~,~], \{ ~, ~ \})$, called the tangent Lie-Leibniz algebra of the pointed Lie group-rack $(G, \cdot, \smallblackdiamond)$.

Let $(G, \cdot)$ be a Lie group and $\mathcal{A} : G \rightarrow G$ be a smooth pointed averaging operator. Then the Lie group $(G, \cdot)$ and the descendent pointed Lie rack $(G, \diamond_\mathcal{A}, e)$ together form a pointed Lie group-rack. On the infinitesimal level, the map $\mathcal{A}$ induces an averaging operator $\mathcal{A}_{* e} : \mathfrak{g} \rightarrow \mathfrak{g}$ on the corresponding Lie algebra $(\mathfrak{g} = T_e G, [~,~])$. The Lie algebra $(\mathfrak{g}, [~,~])$ and the descendent Leibniz algebra $(\mathfrak{g}, \{ ~, ~ \}_{\mathcal{A}_{* e}})$ forms a Lie-Leibniz algebra. Since the differentiation of the pointed Lie rack $(G, \diamond_\mathcal{A}, e)$ is the Leibniz algebra $(\mathfrak{g}, \{ ~, ~ \}_{\mathcal{A}_{* e}})$ and we have the commutative diagram (\ref{first-diag}), we get another commutative diagram 
\[
\xymatrix{
 \text{Lie group} \ar[d]_{\text{Diff}} \ar[rr]^{\text{pointed avg operator}} & & \text{Lie group-rack} \ar[d]^{\text{Diff}} \\
 \text{Lie algebra} \ar[rr]_{\text{avg operator}} & & \text{Lie-Leibniz algebra.}
}
\]

\medskip

Finally, we consider smooth pointed averaging operators on a pointed Lie rack and show that their differentiation gives rise to averaging operators on the corresponding Leibniz algebra.

\begin{thm}
    Let $(Q, \diamond, e)$ be a pointed Lie rack with the corresponding Leibniz algebra $(T_e Q, \{ ~, ~\})$. Let $\mathcal{A} : Q \rightarrow Q$ be a smooth pointed averaging operator on $(Q, \diamond, e)$.

    (i) Then the differentiation $\mathcal{A}_{*e} : T_e Q \rightarrow T_e Q$ is a left averaging operator on the Leibniz algebra $(T_e Q, \{ ~, ~\})$.

    (ii) The Leibniz algebra corresponding to the descendent pointed Lie rack $(Q, \diamond_\mathcal{A}, e)$ is precisely given by $(T_e Q, \{ ~, ~ \}_{\mathcal{A}_{*e}}).$ 
\end{thm}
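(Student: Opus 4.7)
\medskip

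\noindent\textbf{Proof proposal.} The plan is to represent the Leibniz bracket on $T_e Q$ by means of a concrete double-derivative formula, and then to exploit the rack-averaging identity $\mathcal{A}(x) \diamond \mathcal{A}(y) = \mathcal{A}(\mathcal{A}(x) \diamond y)$ in essentially the same way we did in the Lie group case (Theorem \ref{firr-thm}). The key preparatory observation is that for a pointed Lie rack $(Q, \diamond, e)$ and any smooth curves $\alpha, \beta : (-\varepsilon, \varepsilon) \to Q$ with $\alpha(0) = \beta(0) = e$, $\alpha'(0) = X$, $\beta'(0) = Y$, one has the intrinsic formula
\begin{equation*}
\{X, Y\} \;=\; \frac{d^2}{dt\, ds}\bigg|_{t,s=0} \alpha(t) \diamond \beta(s).
\end{equation*}
Indeed, $\frac{d}{ds}|_{s=0} \alpha(t) \diamond \beta(s) = T_e(L_{\alpha(t)}^\diamond)(Y) = \Theta(\alpha(t))(Y)$, and differentiating once more at $t=0$ recovers $\mathrm{ad}(X)(Y) = \{X, Y\}$ by the very definition of the tangent Leibniz algebra.

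With this tool in hand, part (i) becomes transparent. Set $P = \mathcal{A}_{*e}$ and choose smooth pointed curves $\alpha, \beta$ as above. Since $\mathcal{A}$ is pointed, the curves $t \mapsto \mathcal{A}(\alpha(t))$ and $s \mapsto \mathcal{A}(\beta(s))$ are based at $e$ with velocities $P(X)$ and $P(Y)$ respectively. Thus
\begin{align*}
\{P(X), P(Y)\}
&= \frac{d^2}{dt\, ds}\bigg|_{t,s=0} \mathcal{A}(\alpha(t)) \diamond \mathcal{A}(\beta(s)) \\
&= \frac{d^2}{dt\, ds}\bigg|_{t,s=0} \mathcal{A}\bigl( \mathcal{A}(\alpha(t)) \diamond \beta(s) \bigr) \\
&= \mathcal{A}_{*e}\left( \frac{d^2}{dt\, ds}\bigg|_{t,s=0} \mathcal{A}(\alpha(t)) \diamond \beta(s) \right)
= P\{P(X), Y\},
\end{align*}
where the second equality uses the rack averaging identity, the third uses the chain rule (justified because $\mathcal{A}(\alpha(0)) \diamond \beta(0) = e$ so the inner expression sits at the basepoint), and the last equality again invokes the intrinsic formula with the curve $\mathcal{A}(\alpha(\cdot))$ in place of $\alpha$. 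This is precisely the left-averaging identity on the Leibniz algebra $T_e Q$.

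For part (ii), we have already noted (generalising Proposition \ref{prop-des} to the smooth setting) that $(Q, \diamond_\mathcal{A}, e)$ is again a pointed Lie rack, so its tangent space $T_e Q$ inherits its own Leibniz bracket $\{\,\cdot\,,\,\cdot\,\}^\mathcal{A}$. Applying the intrinsic double-derivative formula to the descendent operation yields
\begin{equation*}
\{X, Y\}^\mathcal{A} \;=\; \frac{d^2}{dt\, ds}\bigg|_{t,s=0} \alpha(t) \diamond_\mathcal{A} \beta(s) \;=\; \frac{d^2}{dt\, ds}\bigg|_{t,s=0} \mathcal{A}(\alpha(t)) \diamond \beta(s) \;=\; \{P(X), Y\},
\end{equation*}
which is exactly the bracket $\{X, Y\}_{\mathcal{A}_{*e}}$ induced by the left-averaging operator $P$ from part (i). I expect the only real subtlety to be the clean justification of the intrinsic double-derivative formula for the Leibniz bracket (isolating the contributions at $t=0$ and $s=0$, and checking that exchanging differentiation with $\mathcal{A}$ is legitimate since the relevant expressions pass through $e$); once this is in place the rest is a one-line application of the defining rack-averaging identity.
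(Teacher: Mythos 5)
Your proposal is correct and follows essentially the same route as the paper: both proofs amount to differentiating the averaging identity $\mathcal{A}(x)\diamond\mathcal{A}(y)=\mathcal{A}(\mathcal{A}(x)\diamond y)$ twice at the basepoint $e$ and reading off the left-averaging identity, with part (ii) reduced in both cases to the observation that $L_x^{\diamond_\mathcal{A}}=L^\diamond_{\mathcal{A}(x)}$, i.e.\ $\Theta_\mathcal{A}=\Theta\circ\mathcal{A}$. The only difference is presentational: the paper first differentiates the operator identity $L^\diamond_{\mathcal{A}(x)}\circ\mathcal{A}=\mathcal{A}\circ L^\diamond_{\mathcal{A}(x)}$ at $e$ to get a commutation of linear maps and then differentiates in $x$, which sidesteps the one subtlety you flag (pushing $\mathcal{A}_{*e}$ through the mixed second derivative); your justification of that step should cite $f(t,0)=\mathcal{A}(\alpha(t))\diamond e=e$ for \emph{all} $t$, not only at $t=0$.
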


\begin{proof}
    (i) Since $\mathcal{A} : Q \rightarrow Q$ is a smooth map with $\mathcal{A} (e) =e$, the differentiation $\mathcal{A}_{* e}$ is a linear map from $T_e Q$ to itself. On the other hand, $\mathcal{A}$ is an averaging operaor implies that $L_{\mathcal{A} (x)}^\diamond \circ \mathcal{A} = \mathcal{A} \circ L_{\mathcal{A} (x)}^\diamond$, for all $x \in Q$. By taking the differentiation at the distinguished element $e$, one obtains 
    \begin{align}\label{leib-r-a}
        T_e (L^\diamond_{\mathcal{A} (x)}) \circ \mathcal{A}_{* e} = \mathcal{A}_{* e} \circ T_e (L^\diamond_{\mathcal{A} (x)}), \text{ for all } x \in Q.
    \end{align}
Next, consider the smooth map $\Theta : Q \rightarrow GL (T_e Q)$ given by $\Theta (x) = T_e (L_x^\diamond)$, for $x \in Q$. Then it follows from (\ref{leib-r-a}) that $\Theta (\mathcal{A} (x) ) \circ \mathcal{A}_{* e} = \mathcal{A}_{* e} \circ \Theta (\mathcal{A} (x) ) $, for all $x \in Q$. Hence for any $Y \in T_e Q$, we have
\begin{align*}
    \Theta (\mathcal{A}(x)) (\mathcal{A}_{*e} (Y)) = \mathcal{A}_{* e} \big(  \Theta ( \mathcal{A} (x)) (Y)  \big).
\end{align*}
Finally, by differentiating the above identity with respect to $x$ at the distinguished point $e$, one get $\{ \mathcal{A}_{* e} (X) ,  \mathcal{A}_{* e} (Y) \} =  \mathcal{A}_{* e} \{  \mathcal{A}_{* e} (X), Y \}$, for any $X, Y \in T_e Q$. This proves that $\mathcal{A}_{* e} : T_e Q \rightarrow T_e Q$ is a left averaging operator on the Leibniz algebra $(T_e Q, \{ ~, ~ \}).$

(ii) Let $\Theta_\mathcal{A} : Q \rightarrow GL (T_e Q)$ be the smooth map given by $\Theta_\mathcal{A} (x) := T_e (L_x^{\diamond_\mathcal{A}})$, for $x \in Q$. Since $L_x^{\diamond_\mathcal{A}} = L_{\mathcal{A} (x)}^\diamond$, we have $\Theta_\mathcal{A} (x) = T_e ( L^\diamond_{\mathcal{A} (x)}) = \Theta (\mathcal{A}(x))$, for all $x \in Q$. Hence we get $\Theta_\mathcal{A} = \Theta \circ \mathcal{A}$. If $(T_e Q, \{ ~, ~ \}_{\diamond_\mathcal{A}})$ is the Leibniz algebra corresponding to the pointed Lie rack $(Q, \diamond_\mathcal{A}, e)$ then for any $X, Y \in T_e Q$, we have
\begin{align*}
    \{ X, Y \}_{\diamond_\mathcal{A}} := T_e (\Theta_\mathcal{A}) (X) Y  
    =~& T_e (\Theta) (\mathcal{A}_{* e} (X)) Y  \quad (\text{as } \Theta_\mathcal{A} = \Theta \circ \mathcal{A}) \\
    =~& \mathrm{ad} ( \mathcal{A}_{* e} (X) ) Y \quad (\because \mathrm{ad} = T_e (\Theta) ) \\
    =~& \{ \mathcal{A}_{* e} (X), Y \} = \{ X, Y \}_{\mathcal{A}_{* e }}.
\end{align*}
This proves the desired result.  
\end{proof}

The last part of the above theorem yields the first commutative diagram of (\ref{two-comm}). By taking the higher powers of the averaging operator $\mathcal{A}$, one obtain the second commutative diagram of (\ref{two-comm}),

\begin{align}\label{two-comm}
\xymatrix{
 \substack{\text{pointed Lie rack} \\ (Q, \diamond, e)} \ar[d]_{\text{Diff}} \ar[rr]^{ \substack{\text{pointed avg operator} \\ \mathcal{A}}} & & \substack{\text{pointed Lie rack} \\ (Q, \diamond_\mathcal{A}, e)}   \ar[d]^{\text{Diff}} &  \substack{\text{pointed Lie rack} \\ (Q, \diamond, e)} \ar[d]_{\text{Diff}} \ar[rr]^{ \substack{\text{pointed avg operator} \\ \mathcal{A}^k}} & & \substack{\text{pointed Lie rack} \\ (Q, \diamond_{\mathcal{A}^k}, e)}   \ar[d]^{\text{Diff}} \\
 \substack{ \text{Leibniz algebra} \\ (T_e Q, \{ ~, ~ \})} \ar[rr]_{ \substack{\text{left avg operator} \\ \mathcal{A}_{*e} } } & & \substack{ \text{Leibniz algebra} \\ (T_e Q, \{ ~, ~ \}_{\mathcal{A}_{*e}})} &   \substack{ \text{Leibniz algebra} \\ (T_e Q, \{ ~, ~ \})} \ar[rr]_{ \substack{\text{left avg operator} \\ {\mathcal{A}^k_{*e} }}  } & & \substack{ \text{Leibniz algebra} \\ (T_e Q, \{ ~, ~ \}_{\mathcal{A}^k_{*e}}).} 
}
\end{align}

One may also define pointed Lie rack-pairings and pointed Lie di-racks, and find their possible connections to di-Leibniz algebras. We will consider these topics in some other work.

\section{Some generalizations and future works}\label{sec8}
In this section, we will give a quick sketch of some generalizations of the topics covered in this paper and mention some future works.

Our main objective of the present paper was to study averaging operators on various algebraic structures and figure out the corresponding induced structures. Many of the structures related to racks are intimately connected to the corresponding structures related to groups. For instance, rack-pairings are related to two-sided skew braces by conjugation. Recently, \cite{bardakov} Bardakov and Gubarev introduced the notion of a skew $k$-brace in the study of Rota-Baxter operators on groups.

\begin{defn}\label{skewk} A {\bf (left) skew $k$-brace} is a tuple $(G, \cdot_0, \cdot_1, \ldots, \cdot_k)$ consisting of a nonempty set $G$ with binary operations $\cdot_0, \cdot_1, \ldots, \cdot_k : G \times G \rightarrow G$ such that

- $(G, \cdot_i)$ is a group, for each $0 \leq i \leq k$, 

- for each $1 \leq i \leq k$, the triple $(G, \cdot_{i-1}, \cdot_i)$ is a skew brace, that is,
\begin{align*}
    x \cdot_i (y \cdot_{i-1} z) = (x \cdot_i y) \cdot_{i-1} x^{-1 (\cdot_{i-1})} \cdot_{i-1} (x \cdot_i z), \text{ for } x, y, z \in G.
\end{align*}
Here $x^{-1 (\cdot_{i-1})}$ is the inverse of the element $x$ in the group $(G, \cdot_{i-1})$.
\end{defn}

\begin{remark}
One may also define the notion of a two-sided skew $k$-brace as a skew $k$-brace $(G, \cdot_0, \cdot_1, \ldots, \cdot_k)$ such that $(G, \cdot_{i-1}, \cdot_i)$ is a two-sided skew brace, for any $1 \leq i \leq k$.
\end{remark}

Generalizing Definition \ref{skewk} in the context of racks, one gets the following notion.

\begin{defn}
Let $k$ be a natural number. A {\bf rack $k$-pairing} is a tuple $(Q, \diamond_0, \diamond_1, \ldots, \diamond_k)$ consisting of a nonempty set $Q$ equipped with $(k+1)$ binary operations $\diamond_0, \diamond_1, \ldots, \diamond_k : Q \times Q \rightarrow Q$ such that the following conditions are hold:

- $(Q, \diamond_i)$ is a rack, for each $0 \leq i \leq k$,

- for all $x, y, z \in Q$ and $1 \leq i \leq k$,
\begin{align*}
    x \diamond_i (y \diamond_{i-1} z) = (x \diamond_i y) \diamond_{i-1} (x \diamond_i z).
\end{align*}
\end{defn}
 It follows from the above definition that a rack $1$-pairing is simply a rack-pairing. More generally, a rack $k$-pairing is a tuple $(Q, \diamond_0, \diamond_1, \ldots, \diamond_k)$ such that $(Q, \diamond_{i-1} , \diamond_i)$ is a rack-pairing, for each $1 \leq i \leq k$. The following result is a generalization of Proposition \ref{prop-similar}.

 \begin{prop}
    Let $(G, \cdot_0, \cdot_1, \ldots, \cdot_k)$ be a two-sided skew $k$-brace. Then the tuple $(G, \diamond_0, \diamond_1, \ldots, \diamond_k)$ is a rack $k$-pairing, where
    \begin{align*}
        x \diamond_i y := x \cdot_i y \cdot_i x^{-1 (\cdot_i)}, \text{ for any } x, y \in G \text{ and } 0 \leq i \leq k.
    \end{align*}
\end{prop}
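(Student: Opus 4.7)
The plan is to reduce the statement to the two-operation case already handled by Proposition \ref{prop-similar}. A rack $k$-pairing requires two things: that each $(G, \diamond_i)$ is a rack, and that for every $1 \le i \le k$ the compatibility $x \diamond_i (y \diamond_{i-1} z) = (x \diamond_i y) \diamond_{i-1} (x \diamond_i z)$ holds. The second condition is precisely the defining compatibility of a rack-pairing $(G, \diamond_{i-1}, \diamond_i)$, so it suffices to produce a rack-pairing structure for each consecutive pair of indices.

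First I would verify the rack condition. Since $(G, \cdot_i)$ is a group for every $0 \le i \le k$, the conjugation operation $x \diamond_i y = x \cdot_i y \cdot_i x^{-1(\cdot_i)}$ turns $(G, \diamond_i)$ into a rack (this is the standard conjugation rack, recalled early in Section \ref{sec2}). This handles the first bullet point of the definition of a rack $k$-pairing.

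Next, I would invoke the two-sided skew brace structure. By the definition of a two-sided skew $k$-brace, for every $1 \le i \le k$ the triple $(G, \cdot_{i-1}, \cdot_i)$ is itself a two-sided skew brace. Applying Proposition \ref{prop-similar} with $\cdot = \cdot_{i-1}$ and $\bullet = \cdot_i$ (so that the inverse $\overline{x}$ in Proposition \ref{prop-similar} corresponds to $x^{-1(\cdot_i)}$ here), I immediately obtain that $(G, \diamond_{i-1}, \smallblackdiamond_i)$ is a rack-pairing, where $x \smallblackdiamond_i y = x \cdot_i y \cdot_i x^{-1(\cdot_i)}$. But this is exactly the operation $\diamond_i$ in our notation, so $(G, \diamond_{i-1}, \diamond_i)$ is a rack-pairing. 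The rack-pairing compatibility for each consecutive pair of indices then yields the compatibility required in the definition of a rack $k$-pairing, completing the proof.

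I do not expect any genuine obstacle: the argument is a direct bookkeeping reduction to Proposition \ref{prop-similar}, and the only point to check carefully is the bijection between the notation of Proposition \ref{prop-similar} (with two operations $\cdot, \bullet$) and the indexed notation used here. Since the definition of a two-sided skew $k$-brace packages the compatibilities exactly as pairwise two-sided skew brace relations, no further identities between non-consecutive $\cdot_i$ and $\cdot_j$ are needed.
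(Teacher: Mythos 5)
Your proposal is correct and follows exactly the route the paper intends: the paper states this proposition without proof, introducing it as "a generalization of Proposition \ref{prop-similar}," and the evident argument is precisely your pairwise reduction — each $(G,\diamond_i)$ is the conjugation rack of the group $(G,\cdot_i)$, and applying Proposition \ref{prop-similar} to each two-sided skew brace $(G,\cdot_{i-1},\cdot_i)$ gives the rack-pairing compatibility for each consecutive pair, which is all the definition of a rack $k$-pairing demands.
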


In Proposition \ref{avg-rack-pair}, we have shown that any averaging operator on a rack yields the structure of a rack-pairing. In the following result, we show that an averaging operator also induces a rack $k$-pairing.

 \begin{prop}
     Let $(Q, \diamond)$ be a rack and $\mathcal{A}: Q \rightarrow Q$ be an averaging operator on $Q$. Define binary operations $\diamond_1, \ldots, \diamond_k : Q \times Q \rightarrow Q$ by
     \begin{align*}
          x \diamond_i y = \mathcal{A}^i (x) \diamond y, \text{ for all } x, y \in Q.
     \end{align*}
     Then $(Q, \diamond_0 = \diamond, \diamond_1, \ldots, \diamond_k)$ is a rack $k$-pairing.
 \end{prop}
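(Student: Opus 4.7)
The plan is to reduce this statement to the two earlier results on iterated descendent racks, namely Proposition \ref{avg-power-rack} and Proposition \ref{avg-rack-pair}, because the operations $\diamond_i$ defined here are literally the same as the iterated descendent operations $\diamond_{\mathcal{A}^i}$ studied in Section \ref{sec3}. So at the level of structure there is nothing new beyond bookkeeping; the work has essentially already been done.

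First I would verify the axiom that each $(Q, \diamond_i)$ is a rack for $0 \leq i \leq k$. By Proposition \ref{avg-power-rack}(i), every iterate $\mathcal{A}^i : Q \rightarrow Q$ is itself an averaging operator on the rack $(Q, \diamond)$. Hence the descendent rack construction (Proposition \ref{avg-power-rack}) applied to $\mathcal{A}^i$ yields that $(Q, \diamond_{\mathcal{A}^i}) = (Q, \diamond_i)$ is a rack. This immediately covers the first defining condition of a rack $k$-pairing.

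Second, I would verify the compatibility $x \diamond_i (y \diamond_{i-1} z) = (x \diamond_i y) \diamond_{i-1} (x \diamond_i z)$ for each $1 \leq i \leq k$. This is exactly the compatibility condition of the rack-pairing $(Q, \diamond_{i-1}, \diamond_i)$. But Proposition \ref{avg-rack-pair} already asserts that $(Q, \diamond_{\mathcal{A}^k}, \diamond_{\mathcal{A}^{k+l}})$ is a rack-pairing for all $k, l \geq 0$. Taking the parameters to be $i-1$ and $1$ respectively, we obtain that $(Q, \diamond_{i-1}, \diamond_i)$ is a rack-pairing, which finishes the verification.

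There is essentially no obstacle: the hard computational work was isolated in Proposition \ref{avg-rack-pair}, whose argument is a single application of the left-distributivity of $\diamond$ together with the averaging identity for $\mathcal{A}^{i-1}$. If one preferred a self-contained unpacking, one would expand the left-hand side as $\mathcal{A}^i(x) \diamond (\mathcal{A}^{i-1}(y) \diamond z)$, use left-distributivity of $\diamond$ to rewrite it as $(\mathcal{A}^i(x) \diamond \mathcal{A}^{i-1}(y)) \diamond (\mathcal{A}^i(x) \diamond z)$, and then use that $\mathcal{A}^{i-1}$ is an averaging operator to recognize the first factor as $\mathcal{A}^{i-1}(\mathcal{A}^i(x) \diamond y)$, which matches the right-hand side $\mathcal{A}^{i-1}(\mathcal{A}^i(x) \diamond y) \diamond (\mathcal{A}^i(x) \diamond z) = (x \diamond_i y) \diamond_{i-1} (x \diamond_i z)$. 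Either route yields the rack $k$-pairing structure.
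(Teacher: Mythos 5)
Your proof is correct and is essentially the argument the paper intends: the proposition is stated without proof precisely because it follows immediately from Proposition \ref{avg-power-rack}(i) (each $\mathcal{A}^i$ is an averaging operator, so each $(Q,\diamond_i)$ is a descendent rack) and Proposition \ref{avg-rack-pair} with parameters $(i-1,1)$ (giving the compatibility between consecutive operations). Your explicit unpacking at the end also reproduces the computation in the paper's proof of Proposition \ref{avg-rack-pair}, so nothing is missing.
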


 It is not hard to define pointed rack $k$-pairings and related structures. Is a pointed rack $k$-pairing always embedded into an averaging rack? On the other hand, one can define pointed Lie rack $k$-pairings by considering smooth structures. It is interesting to find their infinitesimal version. Another interesting question could be the use of rack $k$-pairings in knot theory.

 \medskip

 The notion of $n$-valued groups (also called multivalued groups) are generalization of usual abstract groups (see, for instance, \cite{buch}). The theory of $n$-valued groups generalizes many results from classical group theory.
 %Recently, the authors in \cite{bar} introduced the notion of $n$-valued racks and $n$-valued quandles. 
 In a subsequent paper, we aim to define Rota-Baxter operators and averaging operators on $n$-valued groups and the concept of $n$-valued skew braces. We are also looking forward to defining $n$-valued Lie algebras and finding possible connections to Lie $n$-valued groups.

 \medskip

 Another higher generalization of racks was introduced by Biyogmam \cite{biyog} who considered $n$-racks. Note that $n$-valued racks and $n$-racks are two different notions. In the former one, the defining operation takes values in the $n$ cartesian product of the underlying set, while in the latter one, the defining operation is defined on the $n$ cartesian product of the underlying set. The author in \cite{biyog} also considered Lie $n$-racks and showed that the differentiation of a pointed Lie $n$-rack yields an $n$-Leibniz algebra. In a forthcoming article, we will define the notion of an averaging operator on $n$-racks and show that such an operator induces an averaging operator on the corresponding $n$-Leibniz algebra. We also aim to formulate and study the $n$-ary generalization of the set-theoretical solutions of the Yang-Baxter equation and show that an $n$-rack naturally gives rise to a solution of such an equation.

%\[
%\xymatrix{
%\text{pointed Lie rack} \ar[d]_{\text{Diff}} \ar[rr]^{\text{pointed avg operator}} & & \text{pointed Lie di-rack} \ar[d]^{\text{Diff}} \\
%\text{Leibniz algebra} \ar[rr]_{\text{avg operator}} & & \text{di-Leibniz algebra.}
%}
%\]

\medskip
    
\noindent  {\bf Acknowledgements.} The author would like to thank the Department of Mathematics, IIT Kharagpur for providing the beautiful academic atmosphere where the research has been conducted.
%\vspace*{1cm}

\medskip

\noindent {\bf Data Availability Statement.} Data sharing does not apply to this article as no new data were created or analyzed in this study.

\end{document}